\documentclass[10pt]{article}

\title{On symplectic vortex equations over a compact orbifold Riemann surface}
\author{Hironori Sakai}
\date{}

\usepackage[slantedGreek,noBBpl]{mathpazo}
\usepackage{amsmath,amssymb,amsthm}
\usepackage[all]{xy} 
\usepackage{enumerate}
\usepackage[pdftex]{hyperref}

\setlength{\hoffset}{0pt} 
\setlength{\oddsidemargin}{18pt} 
\setlength{\topmargin}{0pt}
\setlength{\textheight}{562pt}
\setlength{\textwidth}{414pt} 

\setlength{\parskip}{4pt} 

\newtheorem{thm}{Theorem}[section]
\newtheorem{prop}[thm]{Proposition}
\newtheorem{lem}[thm]{Lemma}
\newtheorem{cor}[thm]{Corollary}

\theoremstyle{definition}
\newtheorem{dfn}[thm]{Definition}
\newtheorem{assumption}[thm]{Assumption}

\theoremstyle{remark}
\newtheorem{remark}[thm]{Remark}
\newtheorem{example}[thm]{Example}

\newcommand{\squashup}[0]{\vspace{-6pt}}


\renewcommand{\epsilon}[0]{\varepsilon}
\renewcommand{\phi}[0]{\varphi}

\renewcommand{\tilde}[1]{\widetilde{#1}}

\renewcommand{\emph}[1]{\textbf{#1}}

\newcommand{\A}{\mathcal{A}}
\newcommand{\B}{\mathfrak{B}}
\newcommand{\C}{\mathbb{C}}
\newcommand{\F}{\mathcal{F}}
\newcommand{\G}{\mathcal{G}}
\renewcommand{\H}{\mathrm{H}}
\newcommand{\M}{\mathcal{M}}
\renewcommand{\O}{\mathcal{O}}
\renewcommand{\P}{\mathfrak{P}}
\newcommand{\R}{\mathbb{R}}
\newcommand{\U}{\mathcal{U}}
\newcommand{\V}{\mathcal{V}}
\newcommand{\X}{\mathfrak{X}}
\newcommand{\Y}{\mathfrak{Y}}
\newcommand{\Z}{\mathbb{Z}}
\def\i{\mathtt   i}  
\def\g{\mathfrak g}  

\newcommand{\MW}[1]{#1/\!\!/}
\newcommand{\Ts}{\mathcal{T}}
\newcommand{\HDR}{\mathrm{H}_\mathrm{DR}}

\newcommand{\Diff}{\mathfrak{Diff}}

\newcommand{\Bi}{\mathfrak{Bi}}

\newcommand{\StDiff}{\mathfrak{StDiff}}

\newcommand{\PG}{\mathfrak{P}^G}
\newcommand{\RG}{\mathfrak{R}^G}

\newcommand{\Cinf}{\mathcal{C}^\infty}
\newcommand{\dA}{d_{\! A}}
\newcommand{\brdA}{\overline{\partial}_{\! A}}
\newcommand{\zl}{\mu^{-1}(0)}

\DeclareMathOperator{\Hom}{Hom}
\DeclareMathOperator{\id}{id}

\DeclareMathOperator{\Mor}{Mor}
\DeclareMathOperator{\Aut}{Aut}
\DeclareMathOperator{\Vect}{Vect}
\DeclareMathOperator{\Ad}{Ad}
\DeclareMathOperator{\dvol}{dvol}

\DeclareMathOperator{\src}{src}
\DeclareMathOperator{\tgt}{tgt}
\DeclareMathOperator{\coker}{coker}
\DeclareMathOperator{\image}{image}

\newcommand{\MorRep}[0]{\Mor^{\mathrm{Rep}}}

\newcommand{\Ob}[1]{\mathrm{Ob}(#1)}
\newcommand{\Ar}[1]{\mathrm{Ar}(#1)}
\newcommand{\fp}[2]{\!\ {}_{#1}\!\times_{#2}}
\newcommand{\gpoid}[1]{#1_1\!\rightrightarrows\!#1_0}
\newcommand{\rd}[0]{\partial}
\newcommand{\pt}[0]{\mathrm{pt}}
\newcommand{\ul}[1]{\underline{#1}}
\newcommand{\pr}[0]{\mathrm{pr}}
\newcommand{\dual}[0]{{\scriptscriptstyle\vee}}

\newcommand{\pair}[1]{\langle#1\rangle}



\begin{document}

\maketitle

\begin{abstract}
 Making use of theory of differentiable stacks, we study symplectic vortex equations over a compact orbifold Riemann surface. We discuss the category of representable morphisms from a compact orbifold Riemann surface to a quotient stack. After that we define symplectic vortex equations over a compact orbifold Riemann surface. We also discuss the moduli space of solutions to the equations for linear actions of the circle group on the complex plane.
\end{abstract}

\section{Introduction} 
\label{sec:introduction}

The symplectic vortex equations (SVE for short) are introduced by Salamon and Mundet independently. Let $G$ be a compact Lie group and $\g$ its Lie algebra. We choose an invariant inner product on $\g$ to identify $\g$ with its dual space $\g^\dual$. Suppose that $G$ acts on a symplectic manifold $M$ in a Hamiltonian fashion with a moment map $\mu:M \to \g$. Let $P \to \Sigma$ be a principal $G$-bundle over a compact Riemann surface $\Sigma$. The SVE (over $\Sigma$) are defined by the PDEs
\[
 \brdA u = 0, \quad *F_A + \mu(u) = 0
\]
for $G$-equivariant map $u:P \to M$ and a connection $A$ on $P$. Here $\brdA$ is the anti-holomorphic part of the covariant derivative $\dA$, $F_A$ is the curvature of $A$, and $*$ is the Hodge $*$-operator. (We also have to fix a volume form on $\Sigma$ and an almost complex structure on $M$ compatible with the symplectic structure and the $G$-action.) Under some analytic assumptions, we can define Hamiltonian invariants as integrations over the moduli space of solutions to the SVE. The invariants are called symplectic vortex invariants.

The most important feature of the SVE is that we can obtain the equation of pseudo-holomorphic curve from $\Sigma$ to the Marsden--Weinstein quotient $\MW{M}G$ as a certain limit of the SVE. This limit is called the adiabatic limit, and this is the most important idea for an identity between Gromov--Witten invariants (with fixed marked points) of the smooth quotient $\MW{M}G$ and symplectic vortex invariants for the Hamiltonian $G$-space $M$ \cite{gaio05:_gromov_witten}. 

It is conjectured that the identity extends to Gromov--Witten invariants of the orbifold Marsden--Weinstein quotient $\MW{M}G$ (after a suitable extension of theory of SVE) \cite[\S 12.7]{mcduff04:_j}. Gromov--Witten invariants of closed symplectic orbifold are defined as integration over a certain compactification of the moduli space of representable pseudo-holomorphic maps from orbifold Riemann surfaces \cite{chen02:_orbif_gromov_witten}. Therefore it is natural to think about an extension of the SVE such that its adiabatic limit is the equation of representable pseudo-holomorphic maps. But if we put everything into orbifold settings (e.g.\! orbibundle, etc.) for SVE, then the theory becomes extremely complicated. 

To solve this problem, we make use of theory of differentiable stacks. Roughly speaking, the symplectic vortex equations over a Riemann surface $\Sigma$ can be thought as ``differential equations'' of maps from $\Sigma$ to the quotient stack $[M/G]$: a (representable) morphism of stacks $\phi:\Sigma \to [M/G]$ corresponds to the 2-cartesian square
\[
 \xymatrix@C=60pt@R=20pt{
 P \ar[r]^{u} \ar[d]_{\pi^P} & M \ar[d]^{\pi^M} \\
 \Sigma \ar[r]_{\phi} & [M/G],
 }
\]
where $u$ is a $G$-equivariant map and $\pi^P:P \to \Sigma$ is a principal $G$-bundle over $\Sigma$. 

In this paper, we see that we can replace $\Sigma$ with a stack corresponding to compact orbifold Riemann surface in the above diagram. One of the advantages of the replacement is that the upper part of the above diagram still belongs to the category of smooth manifolds. Finally we find that we can use the same PDEs for an extension of SVE after several observations on differentiable stacks.

This paper is organised as follows. In \S 2, we review briefly theory of differentiable stacks. In \S 3, we discuss the category of representable morphisms from an orbifold Riemann surface to a quotient stack. We show that the category is equivalent to a certain category similar to the category of principal $G$-bundles. In \S 4, we fix notations for an orbifold Riemann surface by using statements developed in \S 3, and we consider (representable) pseudo-holomorphic maps from a compact orbifold Riemann surface $\mathbf\Sigma$ to a (orbifold) Marsden--Weinstein quotient. After that we define SVE over $\mathbf\Sigma$ and discuss the moduli space of solutions to the SVE for the case when the circle group $S^1$ acts on the complex plane $\C$.

\paragraph{Notations} 
\label{notation-and-conventions}

We fix the following notations through this paper.

We denote by $\psi$ any right action map: if we have a right $G$-space $M$, then $\psi$ is defined by
\[
 \psi: M \times G \to M;\ (m,g) \mapsto \psi(m,g) = \psi_g(m) =m \cdot g = mg.
\]
Let $\g$ be the Lie algebra of a Lie group $G$. For a $G$-space $M$ and $\xi \in \g$, we define the infinitesimal action $\xi^M \in \Vect(M)$ by
\[
 \xi^M(m) = 
 \begin{cases}
  \left.\dfrac{d}{dt}\right|_{t=0} m \cdot \exp(t\xi) &
  (\text{if $M$ is a right $G$-space}), \\[6pt]
  \left.\dfrac{d}{dt}\right|_{t=0} \exp(-t\xi) \cdot m & 
  (\text{if $M$ is a left $G$-space})
 \end{cases}
 \quad (m \in M).
\]
Here $\Vect(M)$ is the space of vector fields on $M$. The above convention is the same as Bott-Tu \cite{bott01:_equiv_cartan} and opposite to Cieliebak--Gaio--Salamon \cite{cieliebak00:_j_hamil}. Set
\[
 \g^M(m) = \{ \xi^M(m) \in T_mM \mid \xi \in \g \} 
 \ \ \text{and}\ \
 \g^M = \{ \xi^M(m) \in TM \mid \xi \in \g,\ m \in M \}.
\]
If the $G$-action is locally free, then $\g^M$ is a subbundle of the tangent bundle $TM$.

For a (2-)fibred product $\X_1 \fp{F_1}{F_2} \X_2$, the $i$-th ($i=1,2$) projection is denoted by $\pr_i$.

\section{Differentiable stacks}

We review theory of differentiable stacks and geometric objects (e.g.\! differential forms) on stacks. First we recall the bicategory of Lie groupoids and bibundles. Secondly we introduce the 2-category of stacks over the smooth category and geometric objects on Deligne--Mumford stacks. Finally we describe an orbifold Riemann surface as a Deligne--Mumford stack.

\subsection{Lie groupoids and bibundles}

For details of Lie groupoids and bibundles, see Lerman \cite{lerman10:_orbif}, Metzler \cite{metzler03:_topol_smoot_stack}, Moerdijk \cite{moerdijk02:_orbif} and Moerdijk--Mr\v{c}un \cite{moerdijk03:_introd_lie}.

A \emph{groupoid} is a category whose arrows are all invertible. We write $\gpoid{X}$ for a groupoid whose class of objects and class of arrows are $X_0$ and $X_1$ respectively. The source map and the target map are always denoted by $\src$ and $\tgt$ respectively: for $a \in \Hom(x,y)$, $\src(a)=x$ and $\tgt(a)=y$. We also write $a:x \to y$ for such an arrow.

\begin{dfn}
 A groupoid $\gpoid{X}$ is called a \emph{Lie groupoid} if 
 \begin{enumerate}
  \item \squashup 
	both $X_0$ and $X_1$ are smooth manifolds, 
  \item \squashup 
	both the source and the target map are surjective submersions, and
  \item \squashup 
	the composition map 
	\[
	X_1 \fp{\src}{\tgt} X_1 \mapsto X_1; (a,b) \mapsto ab,
	\]
	the unit map $x \mapsto 1_x$ ($x \in X_0$) and the inverse map $a \mapsto a^{-1}$ ($a \in X_1$) are all smooth.
 \end{enumerate}
\end{dfn}


Let $\gpoid{X}$ be a Lie groupoid. For an object $x \in X_0$, the submanifold 
\[ 
(\gpoid{X})_x = \{ a \in X_1 \!\ |\!\ \src(a)=x=\tgt(a) \}
\]
of $X_1$ is a Lie group. The Lie group is called the \emph{stabiliser group} at $x$. 

Two objects $x, y \in X_0$ are said to lie in the same orbit ($x \sim y$) if there is an arrow $a:x \to y$. The quotient space of $X_0$ with respect to the equivalent relation $\sim$ is called the \emph{underlying space} of $\gpoid{X}$ and denoted by $X_0/X_1$. If $x \sim y$, then the stabiliser group at $x$ is naturally isomorphic to the stabiliser group at $y$. 

\begin{example}
 Let $G$ be a Lie group and $M$ a right $G$-space. The \emph{action groupoid} $M \rtimes G$ of the $G$-action on $M$ is a Lie groupoid $M \times G \rightrightarrows M$ whose structure maps are given as follows: $\src(m,g)=mg$, $\tgt(m,g)=m$, $(m,g)(mg,h)=(m,gh)$, $1_{m}=(m,1)$ and $(m,g)^{-1}=(mg,g^{-1})$ for $g,h \in G$ and $m \in M$. The stabiliser group at $m \in M$ is the ordinary stabiliser group at $m$ and the underlying space of the action groupoid $M \rtimes G$ is the quotient space $M/G$.
\end{example}

\begin{dfn} \hfill 

 \begin{itemize}
  \item \squashup
	A Lie groupoid $\gpoid{X}$ is said to be \emph{proper} if 
	$(\src,\tgt) : X_1 \mapsto X_0 \times X_0$ is a proper map.
  \item \squashup
	A Lie groupoid is called an \emph{\'etale groupoid} if the source and the target maps are \'etale (locally diffeomorphic).
 \end{itemize}
\end{dfn}

Next we recall bibundles. Regarding a bibundle as an arrow between Lie groupoids, we obtain the bicategory of Lie groupoids and bibunbles.

\begin{dfn}
 A \emph{right action} of a Lie groupoid $\gpoid{Y}$ on a manifold $P$ consists of two maps $\alpha:P \to Y_0$ and
 \[
 \mu:P \fp{\alpha}{\tgt} Y_1 \to P; \ (p,a) \mapsto pa
 \]
 satisfying that $\alpha(pa)=\src(a)$, $p1_{\alpha(p)}=p$ and $p(ab)=(pa)b$ for any $(a,b) \in Y_1 \fp{\src}{\tgt} Y_1$ and $p \in P$ with $\alpha(p)=\tgt(a)$. The map $\alpha$ is called the \emph{anchor map}. 
%
\end{dfn}

A left action of a Lie groupoid can be defined in a similar way.

\begin{dfn}
 Let $\gpoid{Y}$ be a Lie groupoid. A \emph{principal $(\gpoid{Y})$-bundle} over a manifold $B$ is a smooth map $\pi:P \to B$ with a right action of $\gpoid{Y}$ on $P$ satisfying the following conditions.
\begin{enumerate}
 \item \squashup 
       The map $\pi$ is $(\gpoid{Y})$-invariant, i.e.\!
       $\pi(pa)=\pi(p)$ for any $(p,a) \in P \fp{\alpha}{\tgt} Y_1$ .
 \item \squashup The map $\pi$ is a surjective submersion.
 \item \squashup The map 
       \[
       P \fp{\alpha}{\tgt} Y_1 \to P \fp{\pi}{\pi} P;\ 
       (p,a) \mapsto (p,pa)
       \]
       is a diffeomorphism. Here $\alpha$ is the anchor map for the action.
\end{enumerate}
\end{dfn}


\begin{dfn}
 Let $\gpoid{X}$ and $\gpoid{Y}$ be two Lie groupoids. A \emph{bibundle} (or Hilsum--Skandalis morphism) $f = (\alpha_L,P,\alpha_R)$ is a manifold $P$ equipped with 
 \begin{itemize}
  \item \squashup
	a left action of $\gpoid{X}$ with an anchor map $\alpha_L$ and 
  \item \squashup
	a right action of $\gpoid{Y}$ with an anchor map $\alpha_R$
 \end{itemize}
\squashup
satisfying following conditions.
\begin{enumerate}
 \item \squashup
       The map $\alpha_L:P \to X_0$ is $(\gpoid{Y})$-invariant and $\alpha_R:P \to Y_0$ is $(\gpoid{X})$-invariant.
 \item \squashup
       The actions of $\gpoid{X}$ and $\gpoid{Y}$ are compatible, i.e. $(ap)b=a(pb)$ for any $a \in X_1$, $p \in P$ and $b \in Y_1$ with $\src(a)=\alpha_L(p)$ and $\alpha_R(p)=\tgt(b)$.
 \item \squashup
       The map $\alpha_L:P \to X_0$ is a principal $(\gpoid{Y})$-bundle over $X_0$. 
\end{enumerate}
 
 Given two bibundles $f=(\alpha_L,P,\alpha_R)$ and $f'=(\alpha'_L,P',\alpha'_R)$, a \emph{2-isomorphism} from $f$ to $f'$ is a diffeomorphism $\phi:P \to P'$ which commutes with both $(\gpoid{X})$- and $(\gpoid{Y})$-actions.
\end{dfn}



The composition of bibundles is not strictly associative, but associative up to a (canonical) 2-isomorphism. Therefore Lie groupoids and bibundles do not form a category, but form a bicategory. We denote by $\Bi$ the bicategory of Lie groupoids and bibundles.

Any bibundle induces a continuous map between underlying spaces and a group homomorphism between stabiliser groups. Suppose that we have a bibundle $f=(\alpha_L,P,\alpha_R)$ from $\gpoid{X}$ to $\gpoid{Y}$. For any $p,p' \in P$ and $a \in X_1$ satisfying $\src a=\alpha_R(p)$ and $\tgt a=\alpha_R(p')$, there is a unique $b \in Y_1$ such that $ap=pb$, $\tgt b = \alpha_R(p')$ and $\src b = \alpha_R(p)$. This fact guarantees that the map 
\[
F^f: X_0/X_1 \to Y_0/Y_1;\ [\alpha_R(p)] \mapsto [ \alpha_L(p)] \quad (p \in P)
\]
is a well-defined continuous map and the map 
\[
 (\gpoid{X})_{\alpha_R(p)} \to (\gpoid{Y})_{\alpha_L(p)};\ a \mapsto b
\]
is a group homomorphism. (Two isomorphic bibundles induces the same continuous map and the group homomorphism.) If two groupoids $\gpoid{X}$ and $\gpoid{Y}$ are equivalent in the bicategory $\Bi$, then the underlying spaces $X_0/X_1$ and $Y_0/Y_1$ are homeomorphic and the group structures of stabiliser groups at the points corresponding under the homeomorphism are isomorphic. Therefore topological structures on the underlying space and stabiliser groups are invariants of the category $\Bi$.

\subsection{Differentiable stacks}

For details of stacks, see Behrend--Xu \cite{behrend11:_differ}, Heinloth \cite{heinloth05:_notes}, Metzler \cite{metzler03:_topol_smoot_stack} and Lerman \cite{lerman10:_orbif}.

Roughly speaking, a stack is a category $\X$ equipped with a functor from $\X$ to a base site satisfying some conditions including a so-called ``gluing condition''. Here a \emph{site} is a category equipped with a Grothendieck topology. Theory of stacks can be discussed for several base sites, but we only use the category $\Diff$ of smooth manifolds and smooth maps as a base site in this paper.

We define a Grothendieck topology on $\Diff$ as follows. For $U \in \Diff$, a family $\{f_i:U_i \to U\}_i$ of smooth maps to $U$ is called a \emph{covering family} of $U$ if $f_i:U_i \to U$ is a local diffeomorphism for each $i$ and the total map $\bigsqcup U_i \to U$ is surjective. The function $K$ assigning to each object $U \in \Diff$ the collection $K(U)$ of covering families define a basis of Grothendieck topology (cf Metzler \cite[Definition 5]{metzler03:_topol_smoot_stack}). The basis of Grothendieck topology generates a Grothendieck topology on the category $\Diff$. 

\begin{remark}
For general definition of Grothendieck topology see Metzler \cite{metzler03:_topol_smoot_stack} and Vistoli \cite{vistoli08:_notes_groth}. Note that a basis for a Grothendieck topology (pretopology) is called a Grothendieck topology in Vistoli \cite[Definition 2.24]{vistoli08:_notes_groth}. We follow Behrend--Xu \cite{behrend11:_differ} for definition of covering families in $\Diff$ and the same definition is used in Lerman \cite[Remark 2.17]{lerman10:_orbif} and Lerman--Malkin \cite{lerman12:_hamil_delig_mumfor}. Metzler uses a slightly different definition for covering family, but the Grothendieck topology is the same as our Grothendieck topology.
\end{remark}

\begin{dfn}
 \label{dfn:Category Fibred in Groupoids}
 A \emph{category fibred in groupoids (over $\Diff$)} is a category $\X$ equipped with a functor $F_{\X}: \X \to \Diff$ satisfying the following conditions.
 \begin{enumerate}
  \item \squashup
	For any $f: V \to U$ in $\Diff$ and any $x \in \X$ with $F_{\X}(x)=U$, there is an arrow $a:y \to x$ in $\X$ such that $F_{\X}(a)=f$.
  \item \squashup
	Suppose we have two arrows $a_1:y_1 \to x$ and $a_2:y_2 \to x$. For any smooth map $f:F_{\X}(y_1) \to F_{\X}(y_2)$ with $F_{\X}(a_2) \circ f = F_{\X}(a_1)$ there is a unique arrow $b:y_1 \to y_2$ such that $a_2 b = a_1$ and $F_{\X}(b) = f$. 
 \end{enumerate}
 \squashup
 The functor $F_{\X}: \X \to \Diff$ is called the \emph{base functor of the category fibred in groupoids}.
\end{dfn}

Omitting the base functor $F_{\X}$, we often say that $\X$ is a category fibred in groupoids. Moreover $F_{\X}$ is described as ``$\ul{\ \ }$''. Namely $\ul{x}=F_{\X}(x)$ for $x \in \X$ and $\ul{a}=F_{\X}(a)$ for an arrow $a$ in $\X$.

The collection of all categories fibred in groupoids form a 2-category:
\begin{dfn}
 A \emph{morphism of categories fibred in groupoids} from $\X$ to $\Y$ is a functor $\phi:\X \to \Y$ which commutes with the base functors. 

 For two morphisms of categories fibred in groupoids $\phi_1,\phi_2:\X \to \Y$, a \emph{2-morphism of categories fibred in groupoids} is a natural transformation $\alpha:\phi_1 \to \phi_2$ such that the horizontal composition $F_{\Y} \ast \alpha$ is the identity transformation of $F_{\X}$.
\end{dfn}

The \emph{fibre of $\X$ over $U \in \Diff$} is the groupoid $\X_U$ with objects $\{ x \in \X\ |\ \underline{x} = U \}$ and arrows $\{ \text{arrow } a \text{ in } \X\ |\ \ul{a}=\id_U \}$. 

Given a category fibred in groupoids $\X$, for every object $x \in \X$ and every $f:V \to \ul{x}$ in $\Diff$ we choose an arrow $a:y \to x$ in $\X$ such that $\ul{a}=f$ (cf Definition \ref{dfn:Category Fibred in Groupoids}). The object $y$ is called the \emph{pullback of the object $x$ via $f$} and denoted by $f^*x$. 

Let $a:x \to y$ be an arrow in a fibre $\X_U$ and $f:V \to U$ a smooth map. Then we have two arrows $b_x:f^*x \to x$ and $b_y:f^*y \to y$ which we have chosen. Then by Definition \ref{dfn:Category Fibred in Groupoids}, there is a unique arrow $\tilde{a}:f^*x \to f^*y$ such that $\tilde{a}$ belongs to $\X_V$. The arrow $\tilde{a}$ is called the \emph{pullback of the arrow $a$ via $f$} and denoted by $f^*a$.

We can assign the descent category $\X_{\{U_i \to U\}}$ to each covering family $\{f_i:U_i \to U\}$ and there is a natural functor from the fibre $\X_U$ to the descent category $\X_{\{U_i \to U\}}$. The category fibred in groupoids is called a \emph{stack} (over $\Diff$) if for any covering family the natural functor is an equivalence of categories. 

The collection of all stacks (over $\Diff$) form a 2-subcategory $\StDiff$ of the 2-category of categories fibred in groupoids. We regard two stacks as the same stack if they are equivalent. 

\begin{remark}
 Some authors say two stacks which are equivalent are isomorphic. But we follow the ordinary terminology of theory of 2-categories.
\end{remark}

 Let $G$ be a Lie group. For a right $G$-space $M$,  we can define a stack $[M/G]$ as follows. An object of $[M/G]$ is a pair $(\pi,\epsilon)$ of a principal $G$-bundle $\pi:P \to U$ over a manifold $U$ and a $G$-equivariant map $\epsilon:P \to M$. The object $(\pi,\epsilon)$ is often denoted by $U \xleftarrow{\pi} P \xrightarrow{\epsilon} M$. An arrow from $V \xleftarrow{\pi^Q} Q \xrightarrow{\epsilon^Q} M$ to $U \xleftarrow{\pi^P} P \xrightarrow{\epsilon^P} M$ is a pair $(f,\tilde{f})$ of smooth map $f:V \to U$ and a $G$-equivariant map $\tilde{f}:Q \to P$ which makes the following diagram commutative:
\[
 \xymatrix@C=60pt@R=4pt{
 V \ar[dd]_{f}& Q \ar[dd]_{\tilde{f}}\ar[l]_{\pi^Q}\ar[rd]^{\epsilon^Q} & \\
   &   & M. \\
 U & P \ar[l]^{\pi^P}\ar[ru]_{\epsilon^P} & 
 }
\]
The category $[M/G]$ is a category fibred in groupoid with the base functor
\[
 [M/G] \to \Diff; \ 
 \begin{cases}
  (U\xleftarrow{\pi}P\xrightarrow{\epsilon}M) \mapsto U & \text{for objects},\\
  (f,\tilde{f}) \to f & \text{for arrows}.
 \end{cases}
\]
For any object $U\xleftarrow{\pi}P\xrightarrow{\epsilon}M$ over $U$ and any smooth map $f:U \to V$, the diagram $V \xleftarrow{\pr_1} V \fp{f}{\pi} P \xrightarrow{\epsilon\circ\pr_2} M$ gives an object of over $V$ and we have an arrow in $[M/G]$.
\[
 \xymatrix@C=60pt@R=4pt{
 V \ar[dd]_{f}& V \fp{f}{\pi} P \ar[dd]_{\pr_2}\ar[l]_{\pr_1}\ar[rd]^{\epsilon\circ\pr_2} & \\
   &   & M. \\
 U & P \ar[l]^{\pi}\ar[ru]_{\epsilon} & 
 }
\]
We choose the object $V \xleftarrow{\pr_1} V \fp{f}{\pi} P \xrightarrow{\epsilon\circ\pr_2} M$ as the pullback of $U\xleftarrow{\pi}P\xrightarrow{\epsilon}M$ via $f$. We can show that the category fibred in groupoids $[M/G]$ is a stack. The stack is called the \emph{quotient stack} associated to the $G$-space.

An arbitrary manifold $M$ can be considered as a quotient stack $[M/\{1\}]$: An object is a smooth map $f$ whose target is $M$ and an arrow from $g:V \to M$ to $f:U \to M$ is a smooth map $a:V \to U$ with $f \circ a = g$. Thanks to the 2-Yoneda embedding, the category $\Diff$ can be embedded into the 2-category of stacks $\StDiff$. We identify every manifold $M$ with the stack $[M/\{1\}]$. A stack $\X$ is said to be \emph{representable} if there is a manifold equivalent to the stack $\X$.

In $\StDiff$ we can always take a $2$-fibre product. 

\begin{dfn}
 Let $\phi_1:\Y_1 \to \X$ and $\phi_2:\Y_2 \to \X$ be two morphisms of stacks. The \emph{(2-)fibre product} $\Y_1 \fp{\phi_1}{\phi_2} \Y_2$ is a stack defined as follows. The class of objects is 
 \[
 \{ (y_1,y_2,a) \mid \ y_1 \in \Y_1,\ y_2 \in \Y_2,\ 
 \ul{y} = \ul{y'} = U,\ a:\phi(y) \to \phi'(y_2) \text{ in } \X_U
 \}.
 \]
An arrow from $(y_1,y_2,a)$ to $(y_1',y_2',a')$ is a pair of arrows $(b_1:y_1 \to y_1',b_2:y_2 \to y_2')$ satisfying $\ul{b_1}=\ul{b_2}$ and $a' \circ \phi_1(b_1) = \phi_2(b_2) \circ a$. The base functor $\Y_1 \fp{\phi_1}{\phi_2} \Y_2 \to \Diff$ is given by $\ul{(y_1,y_2,a)}=\ul{y_1}$ for objects and $\ul{(b_1,b_2)}=\ul{b_1}$ for arrows. In particular $\Y_1 \times \Y_2$ is defined as the 2-fibre product $\Y_1 \fp{\phi_1}{\phi_2} \Y_2$ for $\phi_1:\Y_1 \to \pt$ and $\phi_2:\Y_2 \to \pt$. 
\end{dfn}

\begin{dfn}
 A morphism of stacks $\phi:\Y \to \X$ is said to be \emph{representable} if for any morphism of stack $F$ from a manifold $U$ to $\X$ the fibre product $U \fp{F}{\phi} \Y$ is representable.
\end{dfn}

\begin{dfn}
 A representable morphism $\phi$ from a manifold $X$ to a stack $\X$ is called an \emph{atlas} (resp. \emph{\'etale atlas}) of $\X$ if for any morphism $F$ from a manifold $U$ to the stack $\X$ the projection map from the representable stack $U \fp{F}{\phi} X$ to $U$ is a surjective submersion (resp. surjective local diffeomorphism). 
\end{dfn}

Let $\phi:X_0 \to \X$ be an atlas of a stack $\X$. Choosing a manifold $X_1$ which is equivalent to the representable stack $X_0 \fp{\phi}{\phi} X_0$, we obtain a 2-cartesian diagram:
\[
 \xymatrix@C=60pt@R=20pt{
 X_1 \ar[r]^{s}\ar[d]_{t} & X_0 \ar[d]^{\phi} \\
 X_0 \ar[r]_{\phi}& \X.
 }
\]
We can see that $\gpoid{X}$ is a Lie groupoid whose source map and target map are $s$ and $t$, respectively. The Lie groupoid $\gpoid{X}$ is called the \emph{presentation} of $\X$ associated to the atlas. 

Two presentations of the same stack associated to two atlases are equivalent in $\Bi$, therefore any invariant of the category $\Bi$ (e.g. the underlying space, the stabiliser groups, etc.) is also an invariant of differentiable stacks. See Lerman \cite{lerman10:_orbif} for more details. 

\begin{dfn}
A a stack $\X$ is called a \emph{differentiable stack} (resp \emph{Deligne--Mumford stack}) if $\X$ has an atlas (resp \'etale atlas) and the presentation associated to the atlas is proper.
\end{dfn}

\begin{dfn}
 Let $\gpoid{X}$ be the presentation of $\X$. Suppose that both $X_1$ and $X_0$ have constant dimensions. We define the \emph{dimension} of $\X$ by $2\dim X_0 - \dim X_1$.
\end{dfn}

\begin{dfn}
 Let $\gpoid{X}$ be the presentation of $\X$ associated to an atlas.
 \begin{itemize}
  \item \squashup
	A differentiable stack $\X$ is said to be \emph{compact} if so is $X_0/X_1$.
  \item \squashup
	A differentiable stack $\X$ is said to be \emph{connected}
	if so is $X_0/X_1$.
 \end{itemize}
\end{dfn}

\begin{dfn} 
 \label{dfn:type-R}
 A stack $\X$ is said to be \emph{type-R} if 
 \begin{enumerate}
  \item \squashup
	the stack $\X$ is a compact and connected Deligne--Mumford stack, and 
  \item \squashup
	a generic stabiliser group of $\X$ is trivial.
 \end{enumerate} 
\end{dfn}

\begin{example}
 Let $G$ be a compact Lie group and $M$ a (right) $G$-space. The quotient stack $[M/G]$ always has an atlas $\pi^M:M \to [M/G]$ called the \emph{natural projection}. For objects, $\pi^M(m:U\to M) = (U \xleftarrow{\pr_1} U \times G \xrightarrow{\epsilon}M)$, where $\epsilon(u,g)=m(u)g$. Therefore the quotient stack is differentiable. The action groupoid $M \rtimes G$ is a presentation of the atlas. If the $G$-action is locally free, then the quotient stack is Deligne--Mumford. (This does not mean that the natural projection is an \'etale atlas.)
\end{example}

\subsection{Differential forms and vector fields over stacks}
\label{sec:diff-forms-and-vector-fields-on-stack}

We describe geometric objects (e.g. differential forms, vector fields, etc) on Deligne--Mumford stacks using sheaves over the stack. The general definition of a sheaves over stacks can be found in Behrend--Xu \cite{behrend11:_differ} and Metzler \cite{metzler03:_topol_smoot_stack}. Kashiwara and Schapira \cite{kashiwara06:_categ} explain a general theory of sheaves over sites. Note that a Grothendieck topology on a stack can be induced by the Grothendieck topology on the category $\Diff$. 

\paragraph{Differential forms over a differentiable stack \cite{behrend04:_cohom,behrend11:_differ}}
We define the sheaf $\Omega^k_{\X}$ of differential $k$-forms on a differentiable stack $\X$ as follows. For an object $x \in \X$ over $U$, $\Omega^k_{\X}$ is defined by the space $\Omega^k(U)$ of $k$-forms on $U$. For an arrow $a:x \to y$ in $\X$ with $\ul{a}:U \to V$, we assign the pullback map $\ul{a}^*:\Omega^k(V) \to \Omega^k(U)$ to $\Omega^k_{\X}(y) \to \Omega^k_{\X}(x)$. This presheaf $\Omega^k_\X: \X \to (\R\text{-}\mathrm{Vect})$ satisfies the conditions to be a sheaf over $\X$. Since the exterior derivative $d$ commutes with pullbacks, $d$ makes the abelian sheaves $\Omega_\X^*$ a complex. The complex $\Omega_\X^*$ is called the (big) de Rham complex of $\X$. The de Rham cohomology of $\X$ is defined as the hypercohomology of $\X$ with values in $\Omega_\X^*$. 

Using the presentation $\gpoid{X}$ associated to an atlas of $\X$, we can calculate the de Rham cohomology $\HDR^*(\X)$ more explicitly. The set of (global) $k$-forms on $\X$ is given by 
\[
 \Omega^k(\X) = \{ \eta \in \Omega^k(X_0) \mid \src^*\eta=\tgt^*\eta \}.
\]
The de Rham cohomology is isomorphic to the cohomology of the complex $(\Omega^k(\X),d)$.

\begin{remark}
 In general, the set of global sections of a sheaf $\F$ over a stack $\X$ is defined by applying the global section functor $\Gamma$ to $\F$. If $\X$ is differentiable, the set is canonically isomorphic to the equaliser of the two restriction maps $\F(X_1)\rightrightarrows\F(X_0)$, where $\gpoid{X}$ is a presentation associated to an atlas $X_0 \to \X$. 
\end{remark}

\begin{remark}
 For any vector space $V$, we can define the sheaf of differential $k$-forms on a differentiable stack $\X$ in a similar way.
\end{remark}

\paragraph{Vector fields over a Deligne--Mumford stack \cite{lerman12:_hamil_delig_mumfor}}
If we have a Deligne--Mumford stack $\X$, then we can define the tangent sheaf $\Ts_\X$ as follows. Let $\gpoid{Z}$ be the presentation of $\X$ associated to the \'etale atlas $p: Z_0 \to \X$. For $x \in \X_U$ we have the following cartesian diagrams:
\[
 \xymatrix@R=20pt@C=35pt{
 V_1 \ar@<0.5ex>[r] \ar@<-0.5ex>[r] \ar[d]^{f_1} & V_0 \ar[r]^{q} \ar[d]^{f_0} & U \ar[d]^{x}\\
 Z_1 \ar@<0.5ex>[r] \ar@<-0.5ex>[r] & Z_0 \ar[r]_{p} & \X
 }
\]
Note that $\gpoid{V}$ is an \'etale groupoid and a presentation of $U$. Since $\gpoid{Z}$ is \'etale, $\src^*T{Z_0}=\tgt^*T{Z_0}=T{Z_1}$. According to descent theory, there is a vector bundle $E \to U$ (which is unique up to isomorphisms) such that the pullback $q^*E$ is isomorphic to $f_0^*TZ_0$. Then define $\Ts_\X(x)$ as the set of global sections of the bundle $E \to U$. For an arrow $a:x \to y$, $\Ts_\X(a):\Ts_\X(y) \to \Ts_\X(x)$ is naturally defined. Moreover the tangent sheaf $\Ts_\X$ is independent of the choice of the \'etale atlas. The set of global sections of the tangent sheaf is denoted by $\Vect(\X)$ and an element of $\Vect(\X)$ is called a vector field on $\X$.

\begin{prop}
 [Lerman--Malkin \cite{lerman12:_hamil_delig_mumfor}]
 \label{prop:tangent-sheaf-and-vector-fields}
 Let $\gpoid{X}$ be a presentation of an atlas $p:X_0 \to \X$ of a Deligne--Mumford stack $\X$. 
 \begin{enumerate}
  \item Let $A$ be the pullback bundle of $\ker(d\src) \to X_1$ along the unit map $X_0 \to X_1$. Then the map $d\tgt:A \to TX_0$ is an injective bundle map. (Thus we regard $A$ as a subbundle of $TX_0$.)
  \item \squashup
	The (small) sheaf over $X_0$ induced by the tangent sheaf $\Ts_\X$ is the sheaf of sections of the bundle $TX_0/A$.
  \item \squashup
	The (small) sheaf over $X_1$ induced by the tangent sheaf $\Ts_\X$ is the sheaf of sections of the bundle $TX_1/(\ker d\src + \ker d\tgt)$.
  \item \squashup
	The set of vector fields $\Vect(\X)$ is explicitly given by the following quotient vector space:
	\[
	\V/\{ (v_1,v_0) \in \V \mid v_1 \in \ker(d\src)+\ker(d\tgt) \}.
	\]
	Here $\V = \{(v_1, v_0) \in \Vect(X_1)\times\Vect(X_0) \mid d\src \circ v_1 = v_0 \circ \src,\ d\tgt \circ v_1 = v_0 \circ \tgt\}$ and $\Vect(X_i)$ is the space of the (ordinary) vector fields on $X_i\ (i=0,1)$. In particular, if the atlas $p:X_0 \to \X$ is \'etale, then $\Vect(\X)$ is isomorphic to $\V$. 
 \end{enumerate}
\end{prop}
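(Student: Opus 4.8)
The plan is to treat the bundle $A$ as the tangent bundle to the $\gpoid{X}$-orbits and then transport everything through the \'etale atlas that defines $\Ts_\X$. First I would record two structural facts. Since $\src$ is a submersion, $\ker(d\src)$ is a subbundle of $TX_1$ of rank $\dim X_1-\dim X_0$, so $A=e^*\ker(d\src)$ (with $e:X_0\to X_1$ the unit) is a genuine vector bundle over $X_0$. Geometrically $\ker(d\src)_a$ is tangent to the source-fibre $\src^{-1}(\src a)$, and since $\tgt$ carries that fibre onto the orbit through $\tgt a$, we get $d\tgt(\ker(d\src)_a)=A_{\tgt a}$, the tangent space at $\tgt a$ to its orbit, for every arrow $a$. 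For (i) I would note that $\ker(d\src)_{e(x)}\cap\ker(d\tgt)_{e(x)}$ is the tangent space at the identity of the stabiliser $(\gpoid{X})_x$. Because stabiliser groups are invariants of the bicategory $\Bi$ and those of an \'etale (proper) presentation are finite, every stabiliser of $\gpoid{X}$ is discrete; hence this intersection vanishes and $d\tgt|_A$ is injective. The same argument at an arbitrary $a$ gives $\ker(d\src)_a\cap\ker(d\tgt)_a=0$, a fact I reuse in (iv).

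For (ii) I would compare the given atlas $p$ with the \'etale atlas $p_Z:Z_0\to\X$ through the manifold $W_0$ representing $X_0\fp{p}{p_Z}Z_0$, with projections $\pr_1:W_0\to X_0$ and $\pr_2:W_0\to Z_0$. Since $p_Z$ is an \'etale atlas, $\pr_1$ is a surjective local diffeomorphism, and since $p$ is an atlas, $\pr_2$ is a surjective submersion. By the definition of $\Ts_\X$, the bundle $E\to X_0$ computing $\Ts_\X$ at the object $p$ satisfies $\pr_1^*E\cong\pr_2^*TZ_0$. Now $d\pr_1:TW_0\to\pr_1^*TX_0$ is an isomorphism and $d\pr_2:TW_0\to\pr_2^*TZ_0$ is surjective with kernel the $\pr_2$-vertical bundle, so $\pr_2^*TZ_0\cong \pr_1^*TX_0/\ker(d\pr_2)$. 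The crux is to identify $\ker(d\pr_2)$ with $\pr_1^*A$: the fibres of $\pr_2$ are the $\gpoid{X}$-orbits pulled back through $\pr_1$, so under $d\pr_1$ their tangent spaces match the orbit-tangent bundle $A$. Granting this, $\pr_2^*TZ_0\cong\pr_1^*(TX_0/A)$, and descent along the surjective \'etale $\pr_1$ gives $E\cong TX_0/A$, which is (ii).

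Statement (iii) I would deduce from (ii) by pullback. The object of $\X$ over $X_1$ is $p\circ\src\cong p\circ\tgt$, so the small sheaf over $X_1$ is $\src^*(TX_0/A)$. Composing the surjection $d\src:TX_1\to\src^*TX_0$ with $\src^*TX_0\to\src^*(TX_0/A)$ gives a surjection $TX_1\to\src^*(TX_0/A)$ whose kernel is $(d\src)^{-1}(\src^*A)$. Using the orbit-tangent description, $d\src$ maps $\ker(d\tgt)_a$ onto $A_{\src a}$, which yields both $\ker(d\src)+\ker(d\tgt)\subseteq(d\src)^{-1}(\src^*A)$ and the reverse inclusion; hence $\src^*(TX_0/A)\cong TX_1/(\ker d\src+\ker d\tgt)$. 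Running the same computation with $\tgt$ shows the two descriptions agree, which is precisely the canonical transport isomorphism between $\src^*(TX_0/A)$ and $\tgt^*(TX_0/A)$ that I will use below.

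For (iv) I would use that global sections of $\Ts_\X$ form the equaliser of the two restriction maps $\Gamma(TX_0/A)\rightrightarrows\Gamma(TX_1/(\ker d\src+\ker d\tgt))$. Define $\V\to\Vect(\X)$ by $(v_1,v_0)\mapsto\overline{v_0}$, the class of $v_0$ in $\Gamma(TX_0/A)$; the relations $d\src\circ v_1=v_0\circ\src$ and $d\tgt\circ v_1=v_0\circ\tgt$ say exactly that both restrictions of $\overline{v_0}$ to $X_1$ coincide with $\overline{v_1}$ under the isomorphisms of (iii), so $\overline{v_0}$ is a vector field on $\X$. Its vanishing forces $\overline{v_1}=0$ (as $\src$ is a surjective submersion), i.e. $v_1\in\ker d\src+\ker d\tgt$, matching the stated kernel. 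For surjectivity, given $\sigma$ in the equaliser I would lift it to $v_0\in\Vect(X_0)$ (possible since $A$ is a subbundle), then solve $(d\src,d\tgt)(v_1)=(v_0\circ\src,v_0\circ\tgt)$: the equaliser condition is precisely pointwise solvability, while injectivity of $(d\src,d\tgt)$ from (i) forces the solution to be unique and therefore a smooth section, giving $(v_1,v_0)\in\V$. In the \'etale case $\ker d\src=\ker d\tgt=0$, so the kernel collapses and $\Vect(\X)\cong\V$. The hard part is the identification in (ii) of $\ker(d\pr_2)$ with $\pr_1^*A$ together with the check that the resulting isomorphism respects the descent datum, so that it descends to an honest isomorphism $E\cong TX_0/A$ rather than a merely fibrewise one; I expect this Morita-comparison bookkeeping to be the most delicate step.
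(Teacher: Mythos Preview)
The paper does not actually prove this proposition: it is stated with the attribution ``[Lerman--Malkin]'' and a citation, and the text moves on immediately to symplectic forms with no argument given. So there is no in-paper proof to compare your proposal against; your write-up is already more detailed than anything the paper offers.

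On its own merits your outline is sound and follows the standard Lerman--Malkin line: identify $A$ with the orbit-tangent bundle, use discreteness of stabilisers for a Deligne--Mumford stack to get (i), compare atlases through the fibre product $W_0=X_0\fp{p}{p_Z}Z_0$ for (ii), pull back for (iii), and read off (iv) from the equaliser description of global sections. Two places deserve a little more care. In (ii), the claim $\ker(d\pr_2)\cong\pr_1^*A$ is the right statement, but ``the fibres of $\pr_2$ are the $\gpoid{X}$-orbits pulled back through $\pr_1$'' needs to be made precise: the clean way is to note that $W_0\to Z_0$ is itself a principal $(\gpoid{X})$-bundle (this is how the presentation arises), so its vertical bundle is exactly the pullback of the anchor-direction bundle $A$; then the compatibility with the descent datum on $E$ is automatic. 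In (iv), your surjectivity step is slightly compressed: having lifted $\sigma$ to $v_0$, one first lifts $v_0\circ\src$ along the submersion $\src$ to some $w_1$, observes via the equaliser condition that $d\tgt\circ w_1-v_0\circ\tgt$ lands in $\tgt^*A$, and then corrects $w_1$ by the unique section of $\ker(d\src)$ mapping to it under $d\tgt$ (which is an isomorphism $\ker(d\src)_a\to A_{\tgt a}$ by (i) and a rank count). With those two expansions the argument is complete.
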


\paragraph{Symplectic forms \cite{lerman12:_hamil_delig_mumfor}}
Let $\gpoid{X}$ be a presentation of a Deligne--Mumford stack $\X$ associated to an atlas $p:X_0 \to \X$. For a vector field $(v_1,v_0) \in \Vect(\X)$, we can define the interior product 
\[
\iota(v_1,v_0):\Omega^k(\X)\to\Omega^{k-1}(\X);\
\eta \mapsto \iota(v_0)\eta.
\]

A 2-form $\omega \in \Omega^2(\X)$ on $\X$ is said to be nondegenerate if the map
\[
 \Vect(\X) \to \Omega^1(\X);\ 
 (v_1,v_0) \mapsto \iota(v_1,v_0)\omega
\]
is a linear isomorphism. This is equivalent to the condition that $\ker\omega=A$. A closed nondegenerate 2-form $\omega$ on $\X$ is called a symplectic form on $\X$ and a symplectic Deligne--Mumford stack is a pair $(\X,\omega)$ of a Deligne--Mumford stack $\X$ and a symplectic form $\omega$ on $\X$. 

\paragraph{Orientations and almost complex structures}

Let $\X$ be a differentiable stack of dimension $n$. The stack $\X$ is said to be orientable if $\Omega^n(\X)$ is isomorphic to $\O(\X)$ as an $\O(\X)$-module. For an orientable stack $\X$ we define 
\[
 \Omega^n(\X)_\mathrm{gen}
 = \{ \omega \in \Omega^n(\X) \mid \O(\X)\omega=\Omega^n(\X) \}.
\]
and we denote by $\O(\X)_+$ the space of (global) positive functions. An orientation of $\X$ is a choice of an element of the quotient set $\Omega^n(\X)_\mathrm{gen}/\O(\X)_+$. For a Deligne--Mumford stack $\X$, an orientation is nothing but a pair of orientations of $Z_0$ and $Z_1$ compatible with the source and target maps.

Let $\X$ be a Deligne--Mumford stack. An almost complex structure is an endomorphism $J:\Ts_\X\to\Ts_\X$ of $\O_\X$-modules satisfying $J^2=-\id_{\Ts_\X}$. For an atlas $p: X_0 \to \X$, an almost complex structure $J$ on $\X$ induces a complex structure of the vector bundle $TX_0/A$ compatible with the source and the target map. In particular, for an \'etale atlas $\zeta:Z_0\to\X$, an almost complex structure $J$ induces almost complex structures on $Z_0$ and $Z_1$ which are compatible with the source and target maps. Here $\gpoid{Z}$ is an presentation associated to the atlas $\zeta:Z_0\to\X$. In particular, an almost complex structure induces a canonical orientation on $\X$.

\paragraph{Integral over a Deligne--Mumford stack \cite{behrend04:_cohom}}
A partition of unity of a proper \'etale groupoid $\gpoid{Z}$ is a smooth function $\rho$ on $Z_0$ such that $\src^*\rho$ has proper support with respect to $\tgt:Z_1 \to Z_0$ and $\tgt_!\src^*\rho = 1$. Not all proper \'etale groupoid has a partition of unity, but for any proper \'etale groupoid $\gpoid{Z}$, we can find a proper \'etale groupoid $\gpoid{Z'}$ which is equivalent to $\gpoid{Z}$ and has a partition of unity. 

Let $\X$ be an oriented and compact Deligne--Mumford stack with an \'etale atlas $\zeta:Z_0 \to \X$ and $\gpoid{Z}$ a presentation associated to the atlas. If the presentation $\gpoid{Z}$ has a partition of unity $\rho:Z_0 \to \R$, then we can define an integral
\[
 \int_\X : \Omega^n(\X) \to \R;\ \ \int_\X \omega  = \int_{Z_0} \rho\!\ \omega. 
\]
Here $n=\dim \X = \dim Z_0$. This gives rise to a linear map $\H^n(\X)\to\R$ as usual. Moreover the Poincar\'e duality holds: The pairing 
\[
 \HDR^p(\X) \otimes \HDR^{n-p}(\X) \to \R;\ 
 ([\omega],[\eta]) \mapsto \int_\X \omega\wedge\eta
\]
is nondegenerate. 

\subsection{An orbifold Riemann surface as a stack}
\label{sec:orbifold-Riemann-surface}

In this paper we consider an orbifold Riemann surface as a Deligne--Mumford stack. We describe how to construct the stack in this subsection. We basically follow terminologies of Moerdijk--Pronk \cite{moerdijk97:_orbif,moerdijk99:_simpl} for orbifolds.

A compact orbifold Riemann surface is a quadruple $\mathbf{\Sigma}=(\Sigma,j,\mathbf{z},\mathbf{m})$ of a closed Riemann surface $\Sigma$, a complex structure $j$ on $\Sigma$, a $k$-tuple $\mathbf{z}=(z_1,\dots,z_k)$ of distinct points on $\Sigma$ and a $k$-tuple $\mathbf{m}=(m_1,\dots,m_k)$ of positive integers. We define the \emph{multiplicity} $m$ of $p \in \Sigma$ as follows: 
\[
  m = 
  \begin{cases}
   m_i  & \text{if}\ p=z_i\ \text{for some}\ i,\\
   1 & \text{else}.
  \end{cases}
\]
A point $p \in \Sigma$ is said to be \emph{smooth} if its multiplicity is $1$. 

First we recall an orbifold atlas for the compact orbifold Riemann surface $\mathbf{\Sigma}$. For a point $p$ of multiplicity $m$, we choose a complex coordinate system $w_p:V_p \to \C$ centred at $p$ so that $V_p \setminus \{p\}$ consists only of smooth points. We also choose an open disc $U_p \subset \C$ centred at the origin and a holomorphic map $\pi_p:U_p \to \Sigma$ satisfying $\pi_p(0)=p$, $\pi_p(U_p)=V_p$ and $(w_p\circ\pi_p)(z)=z^m$. Here $z$ is the standard coordinate function on $\C$. Then the group 
\[
 G_p = \left\{ e^{\frac{2\pi\i k}{m}} \in \C \!\ \middle|\!\  k=0,\dots,m-1 \right\} \cong \Z/m\Z
\]
acts on $U_p$ as multiplication and the map $\pi_p$ induces a homeomorphism from $U_p/G_p$ to $V_p$. Then $(U_p,G_p,\pi_p)$ is an orbifold chart for $\pi_p(U_p)$ and the family $\U = \{(U_p,G_p,\pi_p)\}_{p \in \Sigma}$ is an orbifold atlas on $\Sigma$.

We consider the tangent bundle $T\mathbf{\Sigma}$. This is a complex orbibundle over $\mathbf\Sigma$. Choosing a metric compatible with the complex structure, we obtain the $S^1$-bundle $P$ consisting of unit tangent vectors. It is easy to see that the total space of the bundle is a smooth manifold. Moreover the circle group $S^1$ acts locally freely on $P$ as complex multiplication (on the right). 

Since the orbifold $\mathbf\Sigma$ can be obtained as the quotient of $P$ with respect to the $S^1$-action, we regard the quotient stack $\X = [P/S^1]$ as the orbifold Riemann surface $\mathbf\Sigma$. By the construction of $\X$, the stack $\X$ is type-R. 

By Proposition \ref{prop:tangent-sheaf-and-vector-fields}, the sheaf over $P$ induced by the tangent sheaf is the sheaf of sections of $N=TP/\g^P$. Here $\g$ is the Lie algebra of $S^1$. The bundle $N$ is an $S^1$-equivariant vector bundle, and $N/S^1 \to P/S^1$ is naturally identified with the tangent bundle $T\mathbf\Sigma$. Therefore the vector bundle $N$ is equipped with a complex structure compatible with the $S^1$-action. The complex structure gives the almost complex structure on $\X$ which will be denoted by $j$ as well.

\section{The category of representable morphisms}

In this section, we discuss the category of representable morphisms of stacks from a (type-R) stack to a quotient stack. The category of (representable) morphisms from a manifold to a quotient stack is a prototype. Let $M$ be a $G$-space. By the 2-Yoneda lemma and the definition of quotient stacks, any (representable) morphism of stacks $\phi$ from a manifold $U$ to the quotient stack $[M/G]$ corresponds to the 2-cartesian square
\[
 \xymatrix@C=60pt@R=20pt{
 P \ar[r]^{u} \ar[d]_{\pi^P} & M \ar[d]^{\pi^M} \\
 U \ar[r]_{\phi} & [M/G].
 }
\]
Here $\pi^P:P \to U$ is a principal $G$-bundle and $u:P \to M$ is a $G$-equivariant map. To replace $U$ with a (type-R) stack $\X$, we discuss a certain category $\P^G(\X)$ which is similar to the category of principal $G$-bundles. After that we show that the group of automorphisms of the category for the type-R base $\X$ is the same as the ordinary gauge group action. Finally we discuss the category of representable morphisms from $\X$ to the quotient stack $[M/G]$.

\subsection{The category $\P^G(\X)$}

For a stack $\X$ and a compact Lie group $G$, we construct a category $\P^G(\X)$ as follows. 

\begin{dfn}
 An object of $\P^G(\X)$ is a pair of a morphism $\pi^P:P \to \X$ from a right $G$-space $P$ to $\X$ and a 2-morphisms of stacks $\sigma: \pi^P \circ \pr_1 \to \pi^P \circ \psi$ 
 \[
 \xymatrix@C=15pt@R=10pt{
 P \times G \ar[rrr]^{\psi}\ar[dd]_{\pr_1} & & & P \ar[dd]^{\pi^P} \\
 & & & \\
 P \ar[rrr]_{\pi^P} & & \ar@{=>}[ru]^{\sigma} & \X.
 }
 \]
 satisfying the following conditions. 
 \begin{enumerate}
  \item \squashup
	The morphism $\pi^P: P \to \X$ is an atlas. 
  \item \squashup
	The morphism of stacks
	\[
	P \times G \to P \fp{\pi^P}{\pi^P} P;\
	\begin{cases}
	 (p,g) \to (p,pg,\sigma(p,g)) & \text{for objects,}\\
	 f \to (f,f) & \text{for arrows}
	\end{cases}
	\]
	is an equivalence. (Therefore the above square is 2-cartesian.)
  \item \squashup 
	For any $p \in P$, $g, h \in G$ with $\ul{p}=\ul{g}=\ul{h}$, the following identity holds.
	\begin{equation}
	 \label{eqn:2-morph-in-group-action}
	 \sigma(p,gh)=\sigma(pg,h)\sigma(p,g).
	\end{equation}
 \end{enumerate}
\end{dfn}

\begin{remark}
 \label{remark:category-PG-and-quotient-stack}
 If $(\pi:P \to \X,\sigma)$ is an object of $\P^G(\X)$, the stack $\X$ is equivalent to the quotient stack $[P/G]$. On the other hand, for any quotient stack $[P/G]$ we can naturally obtain an object $(\pi^P:P\to[P/G], \sigma^P)$ of $\P^G([P/G])$. (For $(p,g)\in (P \times G)_U$, $\sigma^P(p,g): U\times G \to U \times G$ is defined by $\sigma^P(p,g)(u,a)=(u,g(u)^{-1}a)$.) The point of the definition of $\P^G(\X)$ is that we can stick with a fixed base stack $\X$. 
\end{remark}


\begin{dfn}
 An arrow between two objects $(\pi:P \to \X,\sigma)$ and $(\pi':P' \to \X,\sigma')$ of $\P^G(\X)$ is a pair of a $G$-equivariant diffeomorphism $\phi:P \to P'$ and a 2-morphism of stacks $\tau:\pi \to \pi'\circ\phi$ such that for any $(p,g) \in P \times G$ the identity $\tau(pg)\sigma(p,g)=\sigma'(\phi(p),g)\tau(p)$ hold.
 \[
 \xymatrix{
 P \ar[rd]_{\pi}\ar[rr]^{\phi}& \ar@{}[d]|(0.5){\overset{\tau}{\Longrightarrow}} & P'\ar[ld]^{\pi'} \\
 & \X. &
 }
 \]
\end{dfn}

For two arrows $(\phi,\tau):(\pi,\sigma) \to (\pi',\sigma')$ and $(\phi',\tau'):(\pi',\sigma') \to (\pi'',\sigma'')$ in $\PG(\X)$, the composition $(\phi',\tau')(\phi,\tau)$ is defined by $(\phi'\circ\phi,(\tau'\circ\id_\phi)\bullet\tau)$, where $\circ$ is the horizontal composition and $\bullet$ is the vertical composition. The identity arrow of $(\pi:P\to\X,\sigma)$ is $(\id_P,\id_\pi)$. Then $\PG(\X)$ form a groupoid.

\begin{remark}
 For a manifold $X$, $\P^G(X)$ is the category of principal $G$-bundles over $X$.
\end{remark}

\begin{remark}
 A $G$-stack is defined by Romagny \cite{romagny05:_group} as a generalisation of $G$-spaces. If we regard a stack $\X$ as a trivial $G$-stack, then any object $(\pi,\sigma)$ of $\P^G(\X)$ gives a morphisms of $G$-stack. Moreover for any arrow $(\phi,\tau):(\pi,\sigma) \to (\pi',\sigma')$ in $\P^G(\X)$, the 2-morphism of stacks $\tau$ gives a 2-morphism of $G$-stacks $\pi \to \pi'\circ\phi$.
\end{remark}

Let $(\pi^P:P \to \X,\sigma)$ be an object of $\PG(\X)$. For any representable morphism $F:\Y \to \X$, we can construct an object of $\PG(\Y)$ by taking a pullback. Now we have a manifold $Q$ and the 2-cartesian square
\begin{equation}
 \label{eqn:pullback-of-principal-G-bundle}
  \xymatrix@C=15pt@R=10pt{
  Q \ar[rrr]^{\epsilon}\ar[dd]_{\pi} & & & P \ar[dd]^{\pi^P} \\
   & & & \\
  \Y \ar[rrr]_{F} & & \ar@{=>}[ru]^{\alpha} & \X.
  }
\end{equation}
Define $\epsilon':Q \times G \to P$ by $\epsilon'(q,g)=\epsilon(q)g$. Then the map 
\[
 \Ob{Q \times G} \to \Ar{\X};\ (q,g) \mapsto \sigma(\epsilon(q),g)\alpha(q)
\]
gives a 2-morphism of stacks $F\circ(\pi\circ\pr_1) \to \pi^P\circ\epsilon'$. Since the above diagram is 2-cartesian, there is a unique smooth map $\psi':Q \times G \to Q$ and a 2-morphism of stacks $\sigma':\pi\circ\pr_1 \to \pi\circ\psi$ such that $\epsilon\circ\psi'=\epsilon'$ and 
\begin{equation}
 \label{eqn:property-for-sigma'}
 \alpha(\psi'(q,g))F(\sigma'(q,g))=\sigma(\epsilon(q),g)\alpha(q) 
\end{equation}
for any $(q,g) \in Q \times G$.
\begin{prop}
 \label{prop:pullback-equipped-with-G-action}
 The maps $\psi':Q \times G \to Q$ defines a right $G$-action on $Q$.
\end{prop}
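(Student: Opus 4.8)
The plan is to verify the two axioms of a right action---the unit axiom $\psi'(q,1)=q$ and the associativity axiom $\psi'(\psi'(q,g),h)=\psi'(q,gh)$---by exploiting the uniqueness clause in the universal property of the 2-fibre product \eqref{eqn:pullback-of-principal-G-bundle}. The essential preliminary observation is that, since $F$ is representable and $P$ is a manifold, $\epsilon:Q\to P$ is representable, and pulling back along $\id_P$ exhibits $Q=P\fp{\pi^P}{\pi^P}Q$ as representable; hence $Q$ is a manifold. Consequently the groupoid of morphisms into $Q$ is discrete, so that any two maps into $Q$ whose composites with the projections $\pi$ and $\epsilon$ agree (compatibly with $\alpha$) must be \emph{equal}, not merely 2-isomorphic. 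Thus it suffices to check that the relevant pairs of maps induce the same cone over the diagram $\Y\xrightarrow{F}\X\xleftarrow{\pi^P}P$.

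For the unit axiom I would compare the two maps $\id_Q$ and $q\mapsto\psi'(q,1)$. Composing with $\epsilon$ gives $\epsilon(\psi'(q,1))=\epsilon(q)\cdot 1=\epsilon(q)$, so both agree over $P$. On the $\Y$-side, the 2-morphism $\sigma'(-,1)$ relates $\pi$ to $\pi\circ\psi'(-,1)$, and the defining relation \eqref{eqn:property-for-sigma'} reads $\alpha(\psi'(q,1))F(\sigma'(q,1))=\sigma(\epsilon(q),1)\alpha(q)$. Putting $g=h=1$ into the cocycle identity \eqref{eqn:2-morph-in-group-action} gives $\sigma(p,1)=\sigma(p,1)\sigma(p,1)$, whence $\sigma(p,1)=\id$ because $\sigma(p,1)$ is invertible; therefore the right-hand side is just $\alpha(q)$. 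Hence $\psi'(-,1)$, equipped with the 2-morphism $\sigma'(-,1)$, and $\id_Q$, equipped with the identity, are two factorisations of the same cone $(\pi,\epsilon,\alpha)$, so uniqueness forces $\psi'(q,1)=q$ (and incidentally $\sigma'(-,1)=\id$).

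For associativity I would compare the two maps $Q\times G\times G\to Q$ given by $(q,g,h)\mapsto\psi'(\psi'(q,g),h)$ and $(q,g,h)\mapsto\psi'(q,gh)$. Composing with $\epsilon$ and using $\epsilon\circ\psi'=\epsilon'$ twice gives $\epsilon(q)(gh)$ for both, so they agree over $P$. Over $\Y$ the first map carries the composite 2-morphism $\sigma'(\psi'(q,g),h)\,\sigma'(q,g)$ and the second carries $\sigma'(q,gh)$. Applying \eqref{eqn:property-for-sigma'} twice to the first composite, substituting $\epsilon(\psi'(q,g))=\epsilon(q)g$, and finally invoking the cocycle identity \eqref{eqn:2-morph-in-group-action}, I expect to obtain
\[
\alpha\bigl(\psi'(\psi'(q,g),h)\bigr)\,F\bigl(\sigma'(\psi'(q,g),h)\,\sigma'(q,g)\bigr)=\sigma(\epsilon(q),gh)\,\alpha(q),
\]
while a single application of \eqref{eqn:property-for-sigma'} gives the same value $\sigma(\epsilon(q),gh)\alpha(q)$ for the second map. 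Thus both maps determine the same cone, and uniqueness yields $\psi'(\psi'(q,g),h)=\psi'(q,gh)$.

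The routine content is the bookkeeping of horizontal and vertical composites of the 2-morphisms $\alpha$, $\sigma$ and $\sigma'$; the only genuinely delicate point is the first step, namely justifying that $Q$ is a manifold and that uniqueness in the 2-fibre product may therefore be upgraded from ``unique up to unique 2-isomorphism'' to honest equality of smooth maps. Once that reduction is in place, both axioms follow mechanically from the cocycle property \eqref{eqn:2-morph-in-group-action} of $\sigma$, which is precisely the unit and associativity data of the $G$-action on $P$ transported to $Q$ through the pullback.
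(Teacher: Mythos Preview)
Your argument is correct and matches the paper's proof essentially line for line: the paper isolates the uniqueness clause you invoke as a separate Lemma~\ref{lem:universality} and then checks the unit and associativity axioms exactly as you do, using \eqref{eqn:property-for-sigma'} together with the cocycle identity \eqref{eqn:2-morph-in-group-action}. One small remark: your preliminary justification that $Q$ is a manifold is redundant in the paper's setup (it is given as a manifold because $F$ is representable and $P$ is a manifold), and the sentence ``pulling back along $\id_P$ exhibits $Q=P\fp{\pi^P}{\pi^P}Q$'' is garbled---the direct reason is simply that $Q\simeq P\fp{\pi^P}{F}\Y$ with $F$ representable.
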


The following lemma is a direct conclusion of the universality of the 2-cartesian square (\ref{eqn:pullback-of-principal-G-bundle}).
\begin{lem}
 \label{lem:universality}
 Let $\tilde\pi:R\to\Y$ and $\tilde\epsilon:R \to P$ be morphisms of stacks. 
 Suppose we have two pairs $(f_1,\beta_1)$ and $(f_2,\beta_2)$ of a smooth map $f_i:R \to Q$ with $\tilde\epsilon=\epsilon \circ f_i$ and a 2-morphism of stacks $\beta_i:\tilde\pi \to \pi\circ f_i$ $(i=1,2)$:
 \[
 \xymatrix@C=25pt@R=10pt{
 && R \ar[dd]^{f_i}\ar[lldd]_{\tilde\pi}\ar[rrdd]^{\tilde\epsilon} && \\
 & \ar@{=>}[d]^{\ \beta_i} &&& \\
 \Y && Q \ar[ll]^{\pi}\ar[rr]_{\epsilon} && P.
 }
 \]
 If $\alpha(f_1(r))F(\beta_1(r)) = \alpha(f_2(r))F(\beta_2(r))$ for any $r \in R$, then $f_1 = f_2$ and $\beta_1 = \beta_2$.
\end{lem}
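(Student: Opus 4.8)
The plan is to prove the lemma by unwinding the explicit description of the 2-fibre product $Q \simeq \Y \fp{F}{\pi^P} P$ and by exploiting the fact that, since $F$ is representable and $P$ is a manifold, the pullback $Q$ is again a manifold, so that the fibre $Q_U$ of the associated stack over any $U \in \Diff$ is a \emph{discrete} groupoid. This discreteness is the mechanism that will convert the hypothesised equality of arrows in $\X$ into the honest equalities $f_1 = f_2$ and $\beta_1 = \beta_2$. Concretely, I would fix an object $r \in R$ over $U = \ul r$ and translate all the data into the explicit model: under the equivalence, $f_i(r)$ corresponds to a triple $(y_i, \tilde\epsilon(r), a_i)$ with $y_i = \pi(f_i(r))$, whose middle entry is forced to equal $\tilde\epsilon(r)$ by the standing assumption $\tilde\epsilon = \epsilon \circ f_i$, and with $a_i = \alpha(f_i(r)) : F(y_i) \to \pi^P(\tilde\epsilon(r))$ the tautological arrow carried by $\alpha$. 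The 2-morphism $\beta_i$ furnishes an arrow $\beta_i(r) : \tilde\pi(r) \to y_i$ in $\Y_U$, and the hypothesis reads $a_1 \circ F(\beta_1(r)) = a_2 \circ F(\beta_2(r))$ in $\X_U$.

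The key step is to produce an arrow in $Q$ between $f_1(r)$ and $f_2(r)$ lying over $\id_U$. I would set $c = \beta_2(r) \circ \beta_1(r)^{-1} : y_1 \to y_2$ and consider the pair $(c, \id_{\tilde\epsilon(r)})$; since the two middle entries coincide, this is a candidate arrow $(y_1, \tilde\epsilon(r), a_1) \to (y_2, \tilde\epsilon(r), a_2)$ in $\Y \fp{F}{\pi^P} P$. Checking that it is a genuine arrow amounts to verifying the defining compatibility $a_2 \circ F(c) = \pi^P(\id) \circ a_1 = a_1$, which follows at once from functoriality of $F$ together with the hypothesised identity $a_1 \circ F(\beta_1(r)) = a_2 \circ F(\beta_2(r))$. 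Transporting $(c, \id_{\tilde\epsilon(r)})$ across the equivalence $\Y \fp{F}{\pi^P} P \simeq Q$ yields an arrow $f_1(r) \to f_2(r)$ in the fibre $Q_U$.

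Now, because $Q$ is a manifold, $Q_U$ has only identity arrows: an arrow over $\id_U$ exists only between equal objects and is then itself the identity. Hence $f_1(r) = f_2(r)$, and the transported arrow is the identity, forcing $c = \id_{y_1}$ and therefore $\beta_1(r) = \beta_2(r)$. As this holds for every object $r$, the 2-morphisms agree, $\beta_1 = \beta_2$, and $f_1, f_2$ agree on objects. Agreement on arrows is then automatic: an arrow of the manifold $Q$ lying over a prescribed base map between prescribed endpoints is unique, so $f_1(\rho) = f_2(\rho)$ for every arrow $\rho$ of $R$, and thus $f_1 = f_2$ as morphisms of stacks.

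I expect the main obstacle to be purely organisational, namely keeping the composition and 2-morphism conventions straight while verifying the compatibility condition inside the fibre product, and making precise the passage through the equivalence $Q \simeq \Y \fp{F}{\pi^P} P$ (in particular that $\alpha$ restricts to the tautological arrow). The genuinely essential point is conceptually simple: representability of $F$ makes $Q$ a manifold with discrete fibres, and this is exactly what collapses the a priori 2-categorical ``uniqueness up to unique 2-isomorphism'' of the universal property into the strict equalities asserted by the lemma.
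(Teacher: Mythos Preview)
Your argument is correct and is precisely the unpacking of what the paper means when it says the lemma is ``a direct conclusion of the universality of the 2-cartesian square (\ref{eqn:pullback-of-principal-G-bundle})'': the paper does not spell out a proof at all, and your explicit construction of the arrow $(c,\id_{\tilde\epsilon(r)})$ in the fibre product together with the observation that $Q_U$ is discrete is exactly how one cashes out that universality. The one point worth tightening is the phrase ``transporting across the equivalence'': what you are really using is that the canonical functor $Q \to \Y\fp{F}{\pi^P}P$, $q \mapsto (\pi(q),\epsilon(q),\alpha(q))$, is fully faithful on each fibre, so the arrow you built must come from an arrow in $Q_U$ and hence be an identity; making this explicit removes any ambiguity about the direction of the equivalence.
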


\begin{proof}%
 [Proof of Proposition \ref{prop:pullback-equipped-with-G-action}]
 First we show that $\psi'(q,1) = q$ holds for any $q \in Q$. Define $s:Q \to Q \times G$ by $s(q)=(q,1)$. Let $(f_1,\beta_1) = (\psi \circ s,\sigma'\circ\id_s)$ and $(f_2,\beta_2) = (\id_Q,\id_\pi)$, then we obtain the following 2-commutative diagram ($i=1,2$):
 \[
 \xymatrix@C=25pt@R=10pt{
 && Q \ar[dd]^{f_i}\ar[lldd]_{\pi}\ar[rrdd]^{\epsilon} && \\
 & \ar@{=>}[d]^{\ \beta_i} &&& \\
 \Y && Q \ar[ll]^{\pi}\ar[rr]_{\epsilon} && P.
 }
 \]
Then the identity (\ref{eqn:property-for-sigma'}) implies $\alpha(f_1(q))F(\beta_1(q)) = \alpha(f_2(q))F(\beta_2(q))$ for any $q \in Q$. Thus Lemma \ref{lem:universality} implies $f_1=f_2$ i.e.\! $\psi'(q,1)=q$.

 Next we show that $\psi'(\psi'(q,g),h)=\psi'(q,gh)$ holds for any $(q,g,h) \in Q \times G \times G$. Define $s_1:Q \times G \times G \to Q \times G$ by $s_1(q,g,h)=(q,gh)$ and $s_2:Q \times G \times G \to Q \times G$ by $s_2(q,g,h)=(\psi'(q,g),h)$. Let $(f_1,\beta_1) = (\psi' \circ s_1, \sigma'\circ \id_{s_1})$ and $(f_2,\beta_2) = (\psi' \circ s_2, (\sigma'\circ\id_{s_2})\bullet(\sigma'\circ\pr_{12}))$, then we obtain the following 2-commutative diagram ($i=1,2$):
 \[
 \xymatrix@C=25pt@R=10pt{
 && Q \times G \times G
 \ar[dd]^{f_i}
 \ar@/_/[lldd]_{\pi\circ\pr_1}
 \ar@/^/[rrdd]^{\quad (q,g,h) \mapsto \epsilon(q)gh} && \\
 & \ar@{=>}[d]^{\ \beta_i} &&& \\
 \Y && Q \ar[ll]^{\pi}\ar[rr]_{\epsilon} && P.
 }
 \]
 Then the identities (\ref{eqn:2-morph-in-group-action}) and (\ref{eqn:property-for-sigma'}) imply $\alpha(f_1(q))F(\beta_1(q)) = \alpha(f_2(q))F(\beta_2(q))$ for any $q \in Q$. Thus Lemma \ref{lem:universality} implies $f_1=f_2$ i.e.\! $\psi'(q,gh)=\psi'(\psi'(q,g),h)$.
\end{proof}
From now on, $\psi'(q,g)$ is denoted by $qg$. Then we can write the identity (\ref{eqn:property-for-sigma'}) in the form 
\begin{equation}
 \label{eqn:modified-property-for-sigma'}
 \alpha(qg)F(\sigma'(q,g))=\sigma(\epsilon(q),g)\alpha(q) 
\end{equation}
for any $(q,g) \in Q \times G$. The identity $\epsilon\circ\psi'=\epsilon'$ implies that the smooth map $\epsilon:Q \to P$ is $G$-equivariant. Note that we also obtain the following identity 
\begin{equation}
 \label{eqn:sigma'-2-morph-in-group-action}
 \sigma'(q,gh)=\sigma'(qg,h)\sigma'(q,g)
\end{equation}
for any $(q,g,h) \in Q \times G \times G$ in the proof of the above proposition.

\begin{prop}
 \label{prop:G-bundle-is-stable-under-taking-pullback}
 The pair $(\pi:Q \to \Y,\sigma')$ is an object of $\PG(\Y)$.
\end{prop}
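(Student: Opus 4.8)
The plan is to verify the three conditions in the definition of an object of $\PG(\Y)$ for the pair $(\pi:Q\to\Y,\sigma')$. Condition (iii), the cocycle identity for $\sigma'$, is already in hand as \eqref{eqn:sigma'-2-morph-in-group-action}, which was derived in the proof of Proposition \ref{prop:pullback-equipped-with-G-action}; so only (i) and (ii) require argument, and I would dispatch them in that order since (ii) presupposes (i).

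For condition (i) — that $\pi:Q\to\Y$ is an atlas — I would appeal to the stability of atlases under base change. Given an arbitrary morphism $G':U\to\Y$ from a manifold $U$, pasting the defining square of the $G'$-pullback onto the 2-cartesian square \eqref{eqn:pullback-of-principal-G-bundle} identifies $U\fp{G'}{\pi}Q$ with $U\fp{F\circ G'}{\pi^P}P$. Since $\pi^P$ is an atlas by hypothesis on $(\pi^P,\sigma)$, the latter fibre product is representable and its projection to $U$ is a surjective submersion; transporting this across the identification shows simultaneously that $\pi$ is representable and that it is an atlas. The only input here is the pasting lemma for 2-cartesian squares.

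The real content is condition (ii): that $\Phi_Q:Q\times G\to Q\fp{\pi}{\pi}Q$, $(q,g)\mapsto(q,qg,\sigma'(q,g))$, is an equivalence. Having established (i), the target $Q\fp{\pi}{\pi}Q$ is representable, so both sides are manifolds and it suffices to exhibit $\Phi_Q$ as a diffeomorphism. I would construct the inverse $\Psi$ explicitly: to a point $(q_1,q_2,\beta)$ with $\beta:\pi(q_1)\to\pi(q_2)$ I associate the point $(\epsilon(q_1),\epsilon(q_2),\alpha(q_2)F(\beta)\alpha(q_1)^{-1})$ of $P\fp{\pi^P}{\pi^P}P$, then apply the inverse of the equivalence $\Phi_P:P\times G\to P\fp{\pi^P}{\pi^P}P$ (available because $(\pi^P,\sigma)\in\PG(\X)$) to extract a group element $g$ characterised by $\epsilon(q_1)g=\epsilon(q_2)$ and $\sigma(\epsilon(q_1),g)=\alpha(q_2)F(\beta)\alpha(q_1)^{-1}$, and set $\Psi(q_1,q_2,\beta)=(q_1,g)$. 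Smoothness of $\Psi$ is immediate since $\epsilon$, $\alpha$, $F$ and $\Phi_P^{-1}$ are all smooth. That $\Psi\circ\Phi_Q=\id$ is a direct computation: feeding $(q,qg,\sigma'(q,g))$ into $\Psi$, the relation \eqref{eqn:modified-property-for-sigma'} collapses the 2-morphism to $\sigma(\epsilon(q),g)$, whence $\Phi_P^{-1}$ returns exactly $g$.

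The delicate direction, which I expect to be the main obstacle, is $\Phi_Q\circ\Psi=\id$: one must prove not merely $\epsilon(q_1)g=\epsilon(q_2)$ (which is immediate from $G$-equivariance of $\epsilon$) but the genuine equalities $q_1g=q_2$ and $\sigma'(q_1,g)=\beta$ in $Q$. I would obtain these from the universality Lemma \ref{lem:universality}, applied with the tautological family over $R=Q\fp{\pi}{\pi}Q$ (equivalently, pointwise), taking $\tilde\pi=\pi(q_1)$, $\tilde\epsilon=\epsilon(q_2)$ and the two pairs $(q_1g,\sigma'(q_1,g))$ and $(q_2,\beta)$; the hypothesis $\alpha(q_1g)F(\sigma'(q_1,g))=\alpha(q_2)F(\beta)$ of that lemma is precisely \eqref{eqn:modified-property-for-sigma'} combined with the defining relation $\sigma(\epsilon(q_1),g)=\alpha(q_2)F(\beta)\alpha(q_1)^{-1}$ for $g$. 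The bulk of the care lies in keeping the directions and compositions of the 2-morphisms $\alpha$, $F(\beta)$, $\sigma$ and $\sigma'$ mutually consistent. A more conceptual route would instead run the canonical chain $Q\fp{\pi}{\pi}Q\cong\Y\fp{F}{\pi^P}(P\fp{\pi^P}{\pi^P}P)\cong\Y\fp{F}{\pi^P}(P\times G)\cong Q\times G$, using that $\Phi_P$ is an equivalence; but one would then still have to match the resulting equivalence with the explicit formula for $\Phi_Q$, so this relocates rather than removes the 2-morphism bookkeeping.
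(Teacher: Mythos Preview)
Your proposal is correct and follows essentially the same route as the paper: the core computation --- pushing $(q_1,q_2,\beta)$ to $(\epsilon(q_1),\epsilon(q_2),\alpha(q_2)F(\beta)\alpha(q_1)^{-1})$, extracting $g$ from the equivalence $\Phi_P$, and then invoking the universality of the 2-cartesian square (Lemma~\ref{lem:universality}) to conclude $q_1g=q_2$ and $\sigma'(q_1,g)=\beta$ --- is exactly the paper's essential-surjectivity argument. The only differences are packaging: the paper argues fibrewise that $\Phi$ is fully faithful and essentially surjective rather than building a global smooth inverse, and it skips the explicit verification of condition~(i) (your pasting-lemma argument for the atlas property), declaring condition~(ii) to be the only point needing proof.
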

\begin{proof}
It is sufficient to see that the morphism of stacks
\[
\Phi: Q \times G \to Q \fp{\pi}{\pi} Q;\
\begin{cases}
 (q,g) \to (q,qg,\sigma'(q,g)) & \text{for objects,}\\
 a \to (a,a) & \text{for arrows}
\end{cases}
\]
is an equivalence (over each manifold $U$). First we show that the morphism $\phi$ is fully faithful over $U$. Let $(q_1,g_1), (q_2,g_2) \in Q \times G$ be objects over $U$. Suppose that we have an arrow $(q_1,q_1g_1,\sigma'(q_1,g_1)) \to (q_2,q_2g_2,\sigma'(q_2,g_2))$ in $(Q \fp{\pi}{\pi} Q)_U$. Since such an arrow is an identity arrow, we have $(q_1,q_1g_1,\sigma'(q_1,g_1))=(q_2,q_2g_2,\sigma'(q_2,g_2))$. Moreover this implies that $(\epsilon(q_1),\epsilon(q_1)g_1,\sigma(\epsilon(q_1),g_1)))=(\epsilon(q_2),\epsilon(q_2)g_2,\sigma(\epsilon(q_2),g_2)))$ in $(P\fp{\pi^P}{\pi^P}P)_U$. Therefore $g_1=g_2$ and the morphism $\Phi$ is fully faithful over $U$.

Next we show that the morphism $\Phi$ is essentially surjective over $U$. Let $(q_1,q_2,a)$ be an object of $(Q \fp{\pi}{\pi} Q)_U$. Since $(\epsilon(q_1),\epsilon(q_2),\alpha(q_2)F(a)\alpha(q_1)^{-1})$ is an object of $(P \fp{\pi^P}{\pi^P}P)_U$, there is $(p,g) \in (P \times G)_U$ such that $(\epsilon(q_1),\epsilon(q_2),\alpha(q_2)F(a)\alpha(q_1)^{-1})=(p,pg,\sigma(p,g))$. Now the pair $(\sigma'(q_1,g)a^{-1},1)$ gives an arrow from $(\pi(q_2),\epsilon(q_2),\alpha(q_2))$ to $(\pi(q_1g),\epsilon(q_1g),\alpha(q_1g))$. In fact, 
\begin{align*}
 \alpha(q_1g)F(\sigma'(q_1,g)a^{-1}) 
 &= \alpha(q_1g)F(\sigma'(q_1,g))F(a)^{-1} & \\
 &= \sigma(p,g)\alpha(q_1)F(a)^{-1} &(\because\ (\ref{eqn:modified-property-for-sigma'}).) \\
 &= \alpha(q_2). &(\because\ \sigma(p,g)=\alpha(q_2)F(a)\alpha(q_1)^{-1}.) 
\end{align*}
Since the diagram (\ref{eqn:pullback-of-principal-G-bundle}) is 2-cartesian, we can conclude that $q_2=q_1g$ and $a=\sigma'(q_1,g)$, i.e. $(q_1,q_2,a)=(q_1,q_1g,\sigma'(q_1,g))$. Therefore the morphism $\Phi$ is essentially surjective over $U$.
\end{proof}

\subsection{Automorphisms in $\PG(\X)$}
\label{sec:Autmorphisms}

Let $\X$ be a differentiable stack. For an object $(\pi:P \to \X,\sigma)$ of $\PG(\X)$, we consider the group of automorphisms $\Aut(\pi,\sigma)$ of $(\pi,\sigma)$ in $\PG(\X)$. Let $(\phi,\tau)\in \Aut(\pi,\sigma)$. Since $\tau$ is a 2-morphism of stacks $\pi\circ\id_P \to \pi\circ\phi$, we have a unique smooth map $\gamma_{(\pi,\sigma)} = \gamma: P \to G$ making the following diagram commutative.
\[
 \xymatrix@C=15pt@R=10pt{
 P
 \ar[rd]^{(\id_P,\gamma)} 
 \ar@/^1pc/[drrrr]^{\phi} 
 \ar@/_1pc/[dddr]_{\id_P}
 & & & & \\
 & P \times G \ar[rrr]_{\psi} \ar[dd]^{\pr_1}
 & & & P \ar[dd]^{\pi} \\
 & & & & \\
 & P \ar[rrr]_{\pi} & & \ar@{=>}[ru]^{\sigma} & \X.
 }
\]
Therefore $\phi(p)=p\gamma(p)$ for any $p \in P$. Since $\phi$ is $G$-equivariant, $p\gamma(p)g=pg\gamma(pg)$ for any $p \in P$ and $g \in G$. We also have $\tau(p)=\sigma(p,\gamma(p))$ for $p \in P$.

Now we assume that the stack $\X$ is type-R (cf.\! Definition \ref{dfn:type-R}). Since $[P/G]$ is equivalent to $\X$, the $G$-action of $P$ must be locally free and effective. Therefore $\gamma(pg)=g^{-1}\gamma(p)g$ holds for any $g \in G$.
Define $\G(P)$ by
\begin{equation}
 \label{eqn:gauge-group}
\G(P) = \{ \gamma \in \mathcal{C}^\infty(P,G) \mid \gamma(pg)=g^{-1}\gamma(p)g \ \text{for all}\ p \in P \ \text{and}\ g \in G\}. 
\end{equation}
The space $\G(P)$ inherits the group structure from $G$. It is easy to see that the above assignment $(\phi,\tau) \mapsto \gamma_{(\phi,\tau)}$ gives an injective group anti-homomorphism from $\Aut(\pi,\sigma)$ to $\G(P)$. Suppose $\gamma \in \G(P)$. Putting $\phi(p)=p\gamma(p)$ and $\tau(p)=\sigma(p,\gamma(p))$, we can easily check that $(\phi,\tau) \in \Aut(\pi,\sigma)$. Thus the group anti-homomorphism is surjective. 

\begin{prop}
 If $\X$ is type-R, then $\Aut(\pi:P\to \X,\sigma)$ is anti-isomorphic to the group $\G(P)$.
\end{prop}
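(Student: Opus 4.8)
The plan is to package the correspondence constructed in the paragraphs above into a single map $\Theta\colon\Aut(\pi,\sigma)\to\G(P)$, $(\phi,\tau)\mapsto\gamma_{(\phi,\tau)}$, and to verify that it is a bijective group anti-homomorphism; since a bijective anti-homomorphism is automatically an anti-isomorphism, this yields the assertion. First I would recall that for $(\phi,\tau)\in\Aut(\pi,\sigma)$ the 2-morphism $\tau\colon\pi\circ\id_P\to\pi\circ\phi$ produces, through the 2-cartesian square defining $(\pi,\sigma)$, a unique smooth $\gamma\colon P\to G$ with $\phi(p)=p\gamma(p)$ and $\tau(p)=\sigma(p,\gamma(p))$; thus $\Theta$ is well defined as soon as we know $\gamma\in\G(P)$, i.e.\ $\gamma(pg)=g^{-1}\gamma(p)g$. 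Conversely, given any $\gamma\in\G(P)$ the same formulas $\phi(p)=p\gamma(p)$ and $\tau(p)=\sigma(p,\gamma(p))$ define an arrow of $\PG(\X)$: the $G$-equivariance of $\phi$ is exactly the conjugation relation defining $\G(P)$, and the arrow condition $\tau(pg)\sigma(p,g)=\sigma(\phi(p),g)\tau(p)$ follows from the cocycle identity (\ref{eqn:2-morph-in-group-action}). This supplies a two-sided inverse to $\Theta$, so $\Theta$ is a bijection.

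For the anti-homomorphism property I would compute $\Theta$ on a composite of two automorphisms: writing $\phi_i(p)=p\gamma_i(p)$ and using the conjugation relation to move one factor past $\gamma_1(p)$ inside the action, one reads off the map attached to the composite and compares it with the product in $\G(P)$, whose group structure is the pointwise one inherited from $G$; this is precisely the order-reversing identity recorded in the discussion preceding the statement. Injectivity is then immediate: if $\gamma\equiv 1$ then $\phi=\id_P$, and specialising (\ref{eqn:2-morph-in-group-action}) to $g=h=1$ shows $\sigma(p,1)$ is idempotent and hence the identity arrow, so $\tau=\id_\pi$ and $(\phi,\tau)=(\id_P,\id_\pi)$. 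Surjectivity is the inverse construction just described. All of these are short formal checks once the conjugation relation is in hand.

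The step I expect to be the real content — and the only place the full force of Definition \ref{dfn:type-R} is used — is verifying $\gamma\in\G(P)$, namely $\gamma(pg)=g^{-1}\gamma(p)g$. From the $G$-equivariance of $\phi$ one obtains $p\gamma(p)g=pg\gamma(pg)$, so the element $g\gamma(pg)g^{-1}\gamma(p)^{-1}$ fixes $p$ and therefore lies in the stabiliser of $p$. Because $\X$ is type-R the $G$-action on $P$ is locally free, so every stabiliser is discrete, while a generic point of $P$ has trivial stabiliser; over that dense open set the displayed element must equal the identity, giving the conjugation relation there, and continuity of $\gamma$ propagates it to all of $P$. With $\gamma\in\G(P)$ secured, assembling well-definedness, bijectivity, and the anti-homomorphism property completes the proof that $\Theta$ is an anti-isomorphism onto $\G(P)$.
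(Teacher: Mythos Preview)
Your proposal is correct and follows essentially the same route as the paper: extract $\gamma$ from the 2-cartesian square, use the type-R hypothesis (generic trivial stabiliser plus continuity) to obtain the conjugation relation $\gamma(pg)=g^{-1}\gamma(p)g$, and then verify that $(\phi,\tau)\mapsto\gamma$ is a bijective anti-homomorphism with inverse given by $\phi(p)=p\gamma(p)$, $\tau(p)=\sigma(p,\gamma(p))$. Your density-and-continuity justification for $\gamma\in\G(P)$ is in fact more explicit than the paper's one-line appeal to the action being ``locally free and effective''.
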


\subsection{The category of representable morphisms}

Let $M$ be a right $G$-space and $\X$ a differentiable stack. Making use of the category $\P^G(\X)$ we construct a category which is equivalent to the category of representable morphisms from $\X$ to the quotient stack $[M/G]$.

\begin{dfn}
 We define a category $\RG(\X,M)$ as follows.
 \begin{itemize}
  \item \squashup
	An object of $\RG(\X,M)$ is a triple $(\pi,\sigma,\epsilon)$ such that $(\pi,\sigma)$ is an object of $\PG(\X)$ and $\epsilon$ is a $G$-equivariant map $P \to M$. 
  \item \squashup
	An arrow $(\pi,\sigma,\epsilon) \to (\pi',\sigma',\epsilon')$ in $\RG(\X,M)$ is an arrow $(\phi,\tau):(\pi,\sigma) \to (\pi',\sigma')$ in $\PG(\X)$ satisfying $\epsilon'\circ\phi = \epsilon$.
  \item \squashup 
	The composition of two arrows $(\phi,\tau):(\pi,\sigma,\epsilon) \to (\pi',\sigma',\epsilon')$ and $(\phi',\tau'):(\pi',\sigma') \to (\pi'',\sigma'')$ is defined by the composition $(\phi',\tau')(\phi,\tau)$ in $\PG(\X)$.
  \item \squashup 
	The identity arrow of $(\pi,\sigma,\epsilon)$ is defined by the identity arrow $1_{(\pi,\sigma)}$ in $\PG(\X)$.
 \end{itemize}
\end{dfn}

The category of representable morphisms from $\X$ to $[M/G]$ is denoted by $\MorRep(\X,[M/G])$. 

\begin{thm}
 \label{thm:category-of-representable-morphisms}
 The category $\MorRep(\X,[M/G])$ is equivalent to the category $\RG(\X,M)$.
\end{thm}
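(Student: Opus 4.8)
The plan is to construct an explicit equivalence of categories $\Psi:\RG(\X,M)\to\MorRep(\X,[M/G])$ and exhibit its essential inverse, rather than merely comparing objects abstractly. The key observation is that by Remark \ref{remark:category-PG-and-quotient-stack}, an object $(\pi,\sigma)$ of $\PG(\X)$ encodes an equivalence $\X\simeq[P/G]$, so adding the datum of a $G$-equivariant map $\epsilon:P\to M$ should be exactly the information needed to build a morphism $\X\to[M/G]$. First I would define $\Psi$ on objects: given $(\pi:P\to\X,\sigma,\epsilon)$, the pair $(\pi,\sigma)$ makes the square with $\pr_1,\psi$ 2-cartesian, so $P\to\X$ is an atlas and $P\fp{\pi}{\pi}P\simeq P\times G$. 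Using this presentation together with the $G$-equivariant map $\epsilon$, one assembles a morphism of stacks $\phi_\epsilon:\X\to[M/G]$ whose pullback along the natural projection $\pi^M:M\to[M/G]$ recovers $(P\xrightarrow{\epsilon}M)$ as the upper edge of a 2-cartesian square; concretely, for an object $x\in\X_U$ one descends $\epsilon$ along the $G$-bundle structure to produce a principal $G$-bundle over $U$ with an equivariant map to $M$, i.e. an object of $[M/G]_U$. Representability of $\phi_\epsilon$ is automatic because the fibre product with any manifold $U\to\X$ is computed by the 2-cartesian square defining $P$, which is a manifold.

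Next I would define $\Psi$ on arrows. An arrow in $\RG(\X,M)$ is a pair $(\phi,\tau):(\pi,\sigma)\to(\pi',\sigma')$ in $\PG(\X)$ with $\epsilon'\circ\phi=\epsilon$; since $(\phi,\tau)$ is an equivalence of the two atlas presentations of $\X$ compatible with $\sigma,\sigma'$, and the compatibility $\epsilon'\circ\phi=\epsilon$ says the two equivariant maps to $M$ agree under $\phi$, this descends to a 2-morphism $\phi_\epsilon\Rightarrow\phi_{\epsilon'}$ of stacks over $\X$. I would check functoriality (composition and identities) by tracking the horizontal and vertical composition conventions fixed just after the definition of $\PG(\X)$; this is the routine bookkeeping with the cocycle identity \eqref{eqn:2-morph-in-group-action}.

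For the essential inverse, I would use the pullback construction already developed in \S 3.1. Given a representable morphism $F:\X\to[M/G]$, form its pullback along the natural projection $\pi^M:M\to[M/G]$ to obtain a manifold $Q$ sitting in a 2-cartesian square $Q\to M$, $Q\to\X$. By Proposition \ref{prop:pullback-equipped-with-G-action} and Proposition \ref{prop:G-bundle-is-stable-under-taking-pullback}, the pullback of the canonical object $(\pi^M:M\to[M/G],\sigma^M)$ of $\PG([M/G])$ (Remark \ref{remark:category-PG-and-quotient-stack}) equips $Q$ with a right $G$-action making $(\pi^Q:Q\to\X,\sigma^Q)$ an object of $\PG(\X)$, while the top map $\epsilon:Q\to M$ is $G$-equivariant by the identity $\epsilon\circ\psi'=\epsilon'$ noted there. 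This produces an object $(\pi^Q,\sigma^Q,\epsilon)$ of $\RG(\X,M)$, giving a functor $\Theta$ in the reverse direction.

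The main obstacle will be the two natural isomorphisms $\Psi\circ\Theta\simeq\id$ and $\Theta\circ\Psi\simeq\id$, because both require invoking the \emph{universal property} of the 2-fibre product to see that two a priori different morphisms of stacks coincide up to canonical 2-isomorphism. The direction $\Theta\circ\Psi\simeq\id$ is the more delicate one: starting from $(\pi,\sigma,\epsilon)$, the stack $[M/G]$ does not immediately return the \emph{same} manifold $P$ but a canonically equivalent pullback, and I would identify them using Lemma \ref{lem:universality}, which guarantees that a smooth map into $Q$ together with a compatible 2-morphism is uniquely determined by the composite $\alpha(f(r))F(\beta(r))$. Establishing that the comparison 2-morphisms are natural in $(\pi,\sigma,\epsilon)$—i.e. that they form the components of a natural transformation—reduces to the same 2-cartesian universality, so once the object-level identifications are pinned down via Lemma \ref{lem:universality} the naturality squares commute by uniqueness. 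I expect no genuinely new ingredient beyond the propositions of \S 3.1 and the 2-Yoneda description of morphisms into a quotient stack; the difficulty is purely in organising the 2-categorical coherence cleanly.
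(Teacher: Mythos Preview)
Your plan is sound and draws on the same constructions as the paper, but the organisation differs. The paper defines its functor in the \emph{opposite} direction, $\Psi:\MorRep(\X,[M/G])\to\RG(\X,M)$, via pullback along the natural projection $\pi^M$ (this is your $\Theta$), and then establishes the equivalence by proving that $\Psi$ is faithful, essentially surjective, and full as three separate lemmas. What you call $\Psi$ appears in the paper only as the explicit formula $\hat F$ of Proposition~\ref{prop:explicit-formula-for-representable-morphism}, invoked in the essential-surjectivity and fullness steps. Your route trades those three verifications for the two natural isomorphisms $\Psi\Theta\simeq\id$ and $\Theta\Psi\simeq\id$; neither approach is clearly shorter, but the paper's avoids having to check naturality of the comparison 2-isomorphisms, at the cost of a somewhat delicate faithfulness argument that reconstructs a 2-morphism $\theta(x)$ from its restrictions over a trivialising cover of the pulled-back bundle.

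One genuine slip: your justification of representability of $\phi_\epsilon$ is miswired. Representability requires that $U\fp{}{\phi_\epsilon}\X$ be a manifold for every morphism from a manifold $U\to[M/G]$, not for $U\to\X$; the 2-cartesian square exhibiting $P$ over $\X$ does not compute those fibre products. The paper handles this by first exhibiting the square with $P\xrightarrow{\epsilon}M$ on top as 2-cartesian over $[M/G]$ and then asserting that representability follows; you should argue along those lines instead. This is a small point and does not affect the rest of your strategy, but the sentence as written is incorrect.
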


Define a functor $\Psi:\MorRep(\X,[M/G])\to\RG(\X,M)$ as follows. For each representable morphism $F:\X\to [M/G]$, we can choose a manifold $P^F$, a morphism $\pi^F:P^F \to \X$ and a smooth map $\epsilon^F:P^F \to M$ so that the following square is 2-cartesian:
\begin{equation*}
  \xymatrix@C=15pt@R=10pt{
  P^F \ar[rrr]^{\epsilon^F}\ar[dd]_{\pi^F} & & & M \ar[dd]^{\pi^M} \\
   & & & \\
  \X \ar[rrr]_{F} & & \ar@{=>}[ru]^{\alpha^F} & [M/G].
  }
\end{equation*}
Propositions \ref{prop:pullback-equipped-with-G-action} and \ref{prop:G-bundle-is-stable-under-taking-pullback} say that there is a 2-morphism of stacks $\sigma^F: \pr_1\circ\pi^F \to \psi\circ\pi^F$ such that $(\pi^F:P^F \to \X,\sigma^F)$ is an object of $\PG(\X)$ and $\epsilon^F$ is $G$-equivariant i.e.\! $(\pi^F,\sigma^F,\epsilon^F)$ is an object of $\RG(\X,M)$. We define $\Psi(F)$ by $(\pi^F,\sigma^F,\epsilon^F)$ (cf.\! Remark \ref{remark:category-PG-and-quotient-stack}). Note that the 2-morphism of stacks $\sigma^F$ satisfies
\begin{equation}
 \label{eqn:sigmaF-as-pullback}
  \alpha^F(pg)F(\sigma^F(p,g))=\sigma^M(\epsilon^F(p),g)\alpha^F(p)
\end{equation}
for any $(p,g) \in P \times G$.

Let $F_1$ and $F_2$ be representable morphisms $\X \to [M/G]$ and $\theta:F_1 \to F_2$ a 2-morphisms of stacks. Since we have a 2-morphism of stacks $\alpha^{F_1} \bullet (\theta^{-1}\circ\id_{\pi^{F_1}}):F_2\circ\pi^{F_1}\to\epsilon^{F_1}\circ\pi^M$, we obtain a unique smooth map $\phi^\theta:P^{F_1} \to P^{F_2}$ satisfying $\epsilon^{F_2}\circ\phi=\epsilon^{F_1}$ and a unique 2-morphism of stacks $\tau^\theta:\pi^{F_1} \to \pi^{F_2}\circ\phi$:
\[
 \xymatrix@C=15pt@R=10pt{
 P^{F_1} 
 \ar[rd]^{\phi^\theta} 
 \ar@/^1pc/[drrrr]^{\epsilon^{F_1}} 
 \ar@/_1pc/[dddr]_{\pi^{F_1}}
 & & & & \\
 & P^{F_2} \ar[rrr]^{\epsilon^{F_2}} \ar[dd]^{\pi^{F_2}}
 & & & M \ar[dd]^{\pi^M} \\
 & \ar@{}[l]|{\quad\ \overset{\tau^\theta}{\Longrightarrow}} & & & \\
 & \X \ar[rrr]_{F_2} & & \ar@{=>}[ru]^{\alpha^{F_2}} & [M/G].
 }
\]
Here the 2-morphism of stacks $\tau^\theta$ satisfies
\begin{equation}
 \label{eqn:property-of-tau}
 \alpha^{F_2}(\phi^\theta(p))F_2(\tau^\theta(p))=\alpha^{F_1}(p)\theta(\pi^{F_1}(p))^{-1} 
\end{equation}
for any $p \in P^{F_1}$. 

We see that $(\phi^\theta,\tau^\theta)$ is an arrow $(\pi^{F_1},\sigma^{F_1},\epsilon^{F_1}) \to (\pi^{F_2},\sigma^{F_2},\epsilon^{F_2})$ in the category $\RG(\X,M)$. 
Let $(f_1,\beta_1)=(\phi^\theta\circ\psi,(\tau^\theta\circ\id_\psi)\bullet\sigma^{F_1})$ and $(f_2,\beta_2)=(\psi\circ(\phi^\theta\times\id_G),(\sigma^{F_2}\circ\id_{\phi^\theta\times\id_G})\bullet(\tau^\theta\circ\id_{\pr_1}))$. Then we have the following 2-commutative diagrams ($i=1,2$):
\[
\xymatrix@C=25pt@R=10pt{
&& P^{F_1} \times G
\ar[dd]^{f_i}
\ar@/_/[lldd]_{\pi^{F_1}\circ\pr_1}
\ar@/^/[rrdd]^{\quad (p,g) \mapsto \epsilon^{F_1}(p)g} && \\
& \ar@{=>}[d]^{\ \beta_i} &&& \\
\X && P^{F_2} \ar[ll]^{\pi^{F_2}}\ar[rr]_{\epsilon^{F_2}} && P.
}
\]
Then for $(p,g) \in P^{F_1} \times G$ we have 
\begin{align*}
 \alpha^{F_2}(f_1(p,g))F_2(\beta_1(p,g))
 &= \alpha^{F_2}(\phi^\theta(pg))F_2(\tau^\theta(pg))F_2(\sigma^{F_1}(p,g)) \\
 &= \alpha^{F_1}(pg)\theta(\pi^{F_1}(pg))^{-1}F_2(\sigma^{F_1}(p,g)) 
 &(\because (\ref{eqn:property-of-tau}).) \\
 &= \alpha^{F_1}(pg)F_1(\sigma^{F_1}(p,g))\theta(\pi^{F_1}(p))^{-1} \\
 &= \sigma^M(\epsilon^{F_1}(p),g)\alpha^{F_1}(p)\theta(\pi^{F_1}(p))^{-1}.
 &(\because (\ref{eqn:sigmaF-as-pullback}).)
\end{align*}
On the other hand, we have 
\begin{align*}
 \alpha^{F_2}(f_2(p,g))F_2(\beta_2(p,g))
 &= \alpha^{F_2}(\phi^\theta(p)g)F_2(\sigma^{F_2}(\phi^\theta(p),g))F_2(\tau^\theta(p)) \\
 &= \sigma^M(\epsilon^{F_2}(\phi^\theta(p)),g)\alpha^{F_2}(\phi^\theta(p))F_2(\tau^\theta(p)) 
 &(\because (\ref{eqn:sigmaF-as-pullback}).) \\
 &= \sigma^M(\epsilon^{F_1}(p),g)\alpha^{F_1}(p)\theta(\pi^{F_1}(p))^{-1}. 
 &(\because (\ref{eqn:property-of-tau}).)
\end{align*}
Applying Lemma \ref{lem:universality}, we conclude that $f_1=f_2$ and $\beta_1=\beta_2$, i.e.\! for $(p,g) \in P^{F_1} \times G$ we have $\phi^\theta(pg)=\phi^\theta(p)g$ and $\tau^\theta(pg)\sigma^{F_1}(p,g)=\sigma^{F_2}(\phi^\theta(p),g)\tau^\theta(p)$. Similarly we can see that $\phi^\theta$ is a diffeomorphism. (In fact, $\phi^{(\theta^{-1})}$ is the inverse of $\phi^\theta$.)
Therefore $(\phi^\theta,\tau^\theta)$ is an arrow $(\pi^{F_1},\sigma^{F_1},\epsilon^{F_1}) \to (\pi^{F_2},\sigma^{F_2},\epsilon^{F_2})$ in the category $\RG(\X,M)$. We define $\Psi(\theta)=(\phi^\theta,\tau^\theta)$.

\begin{lem}
 The functor $\Psi:\MorRep(\X,[M/G])\to\RG(\X,M)$ is faithful.
\end{lem}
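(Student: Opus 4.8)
The plan is to establish that the functor $\Psi:\MorRep(\X,[M/G])\to\RG(\X,M)$ is injective on arrows. Fix two representable morphisms $F_1,F_2:\X\to[M/G]$ and two 2-morphisms of stacks $\theta,\theta':F_1\to F_2$ with $\Psi(\theta)=\Psi(\theta')$, that is, $\phi^\theta=\phi^{\theta'}$ and $\tau^\theta=\tau^{\theta'}$; the goal is to conclude $\theta=\theta'$. Since a 2-morphism of stacks is a natural transformation, $\theta$ is completely determined by its components $\theta(x):F_1(x)\to F_2(x)$ ranging over all objects $x\in\X$, so it suffices to show $\theta(x)=\theta'(x)$ for every such $x$.

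First I would evaluate the defining identity (\ref{eqn:property-of-tau}),
\[
 \alpha^{F_2}(\phi^\theta(p))F_2(\tau^\theta(p))=\alpha^{F_1}(p)\theta(\pi^{F_1}(p))^{-1},
\]
on an arbitrary object $p$ of the stack $P^{F_1}$, together with the analogous identity for $\theta'$. The hypotheses $\phi^\theta=\phi^{\theta'}$ and $\tau^\theta=\tau^{\theta'}$ make the two left-hand sides coincide, while the common factor $\alpha^{F_1}(p)$ on the right is an arrow in a groupoid fibre of $[M/G]$, hence invertible; cancelling it gives $\theta(\pi^{F_1}(p))^{-1}=\theta'(\pi^{F_1}(p))^{-1}$ and therefore $\theta(\pi^{F_1}(p))=\theta'(\pi^{F_1}(p))$. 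Thus $\theta$ and $\theta'$ have identical components on every object of $\X$ lying in the image of the atlas $\pi^{F_1}:P^{F_1}\to\X$.

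It remains to propagate this equality to an arbitrary object $x\in\X$ over a manifold $U$. Since $\pi^{F_1}$ is an atlas, the projection $U\fp{x}{\pi^{F_1}}P^{F_1}\to U$ is a surjective submersion of manifolds and so admits local sections over some open cover $\{U_i\}$ of $U$. Each section produces an object $y_i$ of $P^{F_1}$ over $U_i$ together with an isomorphism $a_i:x|_{U_i}\to\pi^{F_1}(y_i)$ supplied by the 2-cartesian structure. Naturality of $\theta$ and $\theta'$ with respect to $a_i$ and to the restriction arrow $x|_{U_i}\to x$ expresses the restrictions $\theta(x)|_{U_i}$ and $\theta'(x)|_{U_i}$ through $\theta(\pi^{F_1}(y_i))$ and $\theta'(\pi^{F_1}(y_i))$, which agree by the previous step; hence $\theta(x)$ and $\theta'(x)$ restrict to the same arrow on each $U_i$.

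Finally, because $[M/G]$ is a stack, the presheaf of arrows in its fibres satisfies the gluing axiom, so an arrow over $U$ is determined by its restrictions to a cover; therefore $\theta(x)=\theta'(x)$, and since $x$ was arbitrary we obtain $\theta=\theta'$. I expect the genuine difficulty to lie in this last descent argument: identity (\ref{eqn:property-of-tau}) only pins down the components of $\theta$ along the atlas, and upgrading that to an equality of the full natural transformations requires simultaneously the atlas property of $\pi^{F_1}$ (to obtain the local sections and the comparison isomorphisms $a_i$) and the sheaf property of $[M/G]$ (to glue the local equalities). The accompanying bookkeeping of the naturality squares is routine but must be carried out carefully.
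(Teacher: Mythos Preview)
Your proposal is correct and follows essentially the same route as the paper: both first use identity~(\ref{eqn:property-of-tau}) to pin down $\theta$ on objects of the form $\pi^{F_1}(p)$, then pull an arbitrary $x\in\X_U$ back along local sections of the atlas (the paper phrases this as local trivialisations of the induced principal bundle $P_x\to U$), transport the equality via naturality, and finally invoke the stack axiom for $[M/G]$ to glue. The only cosmetic difference is that the paper frames the second half as ``$\theta(x)$ can be computed without reference to $\theta$'' and draws the explicit commutative square, whereas you argue the comparison $\theta(x)=\theta'(x)$ directly; the underlying mechanism is identical.
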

\begin{proof}
 Let $\theta$ and $\theta'$ be two 2-morphisms of stacks from $F_1$ to $F_2$ in $\MorRep(\X,[M/G])$. Suppose that we have an arrow $(\phi,\tau):\Psi(\theta)\to\Psi(\theta')$ in $\RG(\X,M)$. Because of Identity (\ref{eqn:property-of-tau}), we have 
\[
 \theta(\pi^{F_1}(p)) 
 = F_2(\tau(p))^{-1}\alpha^{F_2}(\phi(p))^{-1}\alpha^{F_1}(p)
 = \theta'(\pi^{F_1}(p))
\]
for any $p \in P$. Therefore it suffices to show that for any object $x \in \X$ the arrow $\theta(x)$ can be calculated without $\theta$. Let $x \in \X_U$ and $\{U_i \to U\}$ be a covering family of $U$ in $\Diff$. Since $[M/G]$ is a stack, $\theta(x)$ can be uniquely determined by its pullbacks $f_i^*\theta(x)$. Therefore it suffices to show that $f_i^*\theta(x)$ can be calculated without $\theta$.

Choosing a manifold $P_x$ equivalent to $U \fp{x}{\pi^{F_1}} P^{F_1}$, we have a principal $G$-bundle $\pi_x: P_x \to U$, a $G$-equivariant map $\epsilon_x: P_x \to P_{F_1}$ and a 2-morphism of stacks $\beta_x:x\circ\pi_x \to \pi^{F_1}\circ\epsilon_x$. We may assume that the pullback of the principal $G$-bundle $\pi_x: P_x \to U$ via $f_i$ is trivial:
\[
\xymatrix@C=10pt@R=10pt{
 U_i \times G \ar[rrr]^{}\ar[dd]_{\pr_1} & & & 
 P_x \ar[rrr]^{\epsilon_x}\ar[dd]_{\pi_x} & & & P^{F_1} \ar[dd]^{\pi^{F_1}} \\
 & & & 
 & & & \\
 U_i \ar[rrr]_{f_i} \ar[rrruu]_{s_i} & & & 
 U \ar[rrr]_{x} & & \ar@{=>}[ru]^{\beta_x} & \X.
 }
\]
Here $s_i:U_i \to P_x$ is a smooth map corresponding to a trivialisation. Using $s_i \in (P_x)_{U_i}$, we obtain the following arrows in $[M/G]_{U_i}$:
\[
 \xymatrix@C=50pt@R=20pt{
 f_i^*F_1(x) \ar[d]^{f_i^*\theta(x)} &
 F_1(f_i^*x) \ar[d]^{\theta(f_i^*x)} \ar[l] \ar[r]^{F_1(\beta_x(s_i))} &
 F_1(\pi^{F_1}(p_i)) \ar[d]^{\theta(\pi^{F_1}(p_i))}  \\
 f_i^*F_2(x) & F_2(f_i^*x) \ar[l] \ar[r]_{F_2(\beta_x(s_i))} & F_2(\pi^{F_1}(p_i)). \\
 }
\]
Because of Identity (\ref{eqn:property-of-tau}), $\theta(\pi^{F_1}(p_i))$ can be calculated without $\theta$, and so is $f_i^*\theta(x)$.
\end{proof}

Let $F$ be a representable morphism from $\X$ to $[M/G]$. Making use of the atlas $P^F \rtimes G$ of $\X$, we can describe $F$ in an explicit way:
\begin{prop}
 \label{prop:explicit-formula-for-representable-morphism}
 Let $F \in \MorRep(\X,[M/G])$. If we identify $\X$ with $[P^F/G]$, then the morphism of stacks $F$ is isomorphic to the morphism of stacks $\hat{F}$ defined by 
 \begin{equation*}
  \hat{F}: [P^F/G] \to [M/G];\ 
   \begin{cases}
    (\pi,\epsilon) \mapsto (\pi,\epsilon^F\circ\epsilon) & \text{for objects,}\\
    (f,\tilde{f}) \mapsto (f,\tilde{f}) & \text{for arrows}.
   \end{cases}
 \end{equation*}
\end{prop}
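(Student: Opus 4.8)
The plan is to fix the equivalence $E:\X \to [P^F/G]$ coming from the atlas $\pi^F:P^F\to\X$ together with its 2-morphism $\sigma^F$ (cf.\ Remark~\ref{remark:category-PG-and-quotient-stack}), and then to produce an explicit 2-isomorphism $F \Rightarrow \hat F\circ E$. Recall that $E$ sends an object $x\in\X_U$ to the pullback object $(\pi_x:P_x\to U,\ \epsilon_x:P_x\to P^F)$ determined by the 2-cartesian square $P_x = U \fp{x}{\pi^F} P^F$ with its 2-morphism $\beta_x:x\circ\pi_x \to \pi^F\circ\epsilon_x$. Then by definition $\hat F(E(x)) = (\pi_x,\ \epsilon^F\circ\epsilon_x)$, so it suffices to exhibit, for every $x$, a canonical isomorphism $\eta_x:F(x)\xrightarrow{\ \sim\ }(\pi_x,\epsilon^F\circ\epsilon_x)$ in $[M/G]_U$ which is natural in $x$.

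First I would paste 2-cartesian squares. Stacking the defining square for $P^F$ (carrying $\alpha^F$) beneath the pullback square for $x$ (carrying $\beta_x$) produces a square with outer vertex $P_x$, projection $\pi_x:P_x\to U$, total map $\epsilon^F\circ\epsilon_x:P_x\to M$, and bottom arrow $F\circ x:U\to[M/G]$; the 2-morphism $\gamma_x$ exhibiting it is the vertical composite of the suitably whiskered 2-morphisms $\alpha^F$ and $\beta_x$. Since the composite of two 2-cartesian squares is again 2-cartesian, this displays $P_x$ as $U\fp{F\circ x}{\pi^M}M$. On the other hand, by the construction of the quotient stack the object $F(x)\in[M/G]_U$ is represented, via pullback along the natural projection $\pi^M$, by its own underlying principal bundle and equivariant map; that is, $U\fp{F\circ x}{\pi^M}M$ is canonically the object $F(x)$. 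Comparing the two 2-cartesian squares over the common cospan $U\xrightarrow{F\circ x}[M/G]\xleftarrow{\pi^M}M$ and invoking uniqueness of 2-fibre products up to canonical equivalence yields the desired isomorphism $\eta_x$; invertibility is automatic since $\eta_x$ arises from an equivalence of 2-fibre products.

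The remaining work, and the part needing the most care, is naturality. For an arrow $a:y\to x$ in $\X$ over $f:V\to U$, the pullback data $(\pi_y,\epsilon_y,\beta_y)$ are related to $(\pi_x,\epsilon_x,\beta_x)$ through the functoriality of forming pullbacks, and I must verify that $\eta_x$ and $\eta_y$ are compatible with both $F(a)$ and $\hat F(E(a))$. Because both $\eta_x$ and $\eta_y$ are pinned down by the universal property of the pasted 2-cartesian squares, this compatibility reduces to the uniqueness clause of that universal property, which is exactly the kind of argument carried out in Lemma~\ref{lem:universality} and in the proof of the faithfulness of $\Psi$: I would show that the two candidate morphisms $P_y\to P_x$ over $f$ have equal composite with $\epsilon^F$ and induce the same 2-morphism into $\X$, whence they agree. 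The genuinely delicate point is the bookkeeping of the composite 2-morphisms $\alpha^F$, $\sigma^F$, $\beta_x$ and $\beta_y$ while establishing this coherence; once it is checked, the family $\{\eta_x\}$ assembles into a 2-isomorphism $F\Rightarrow\hat F\circ E$, which is the assertion of the proposition.
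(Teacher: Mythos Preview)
Your argument is correct, but it follows a different route from the paper. The paper passes through the bicategory $\Bi$ of Lie groupoids and bibundles: pasting the two 2-cartesian squares (the one defining $P^F$ and the one coming from $\sigma^F$) produces a single bibundle $(\pr_1,\,P^F\times G,\,\epsilon^F\circ\psi)$ from the action groupoid $P^F\rtimes G$ to $M\rtimes G$, and the embedding $\B:\Bi\to\StDiff$ recalled in the Remark immediately preceding the proof sends this bibundle to a morphism of stacks isomorphic to $\hat F$. By contrast, you stay entirely inside $\StDiff$: you build the 2-isomorphism $F\Rightarrow\hat F\circ E$ object by object, using the pasting lemma for 2-cartesian squares together with the 2-Yoneda identification of $F(x)$ with its underlying bundle, and then check naturality by hand. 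The paper's route is shorter precisely because naturality is packaged into the construction of $\B$ once and for all; your route avoids invoking the bibundle machinery and is more elementary, at the price of the coherence bookkeeping you flag in your last paragraph. Both are valid proofs of the proposition.
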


\begin{remark}
 There is an embedding $\B$ of the bicategory $\Bi$ of Lie groupoids and bibundles into the 2-category $\StDiff$ of stacks over $\Diff$. For a Lie groupoid $\gpoid{X}$, $\B(\gpoid{X})$ is the category of principal ($\gpoid{X}$)-bundles.  For a bibundle $f_P$ from $\gpoid{X}$ to $\gpoid{Y}$, the morphism of stacks $\B(f_P):\B(\gpoid{X})\to\B(\gpoid{Y})$ is defined as follows. Any principal ($\gpoid{X}$)-bundle $\alpha:Q \to M$ naturally defines a bibundle $f_Q$ from ${M}\!\rightrightarrows\!{M}$ to $\gpoid{X}$. The composition $f_P \circ f_Q$ includes a principal ($\gpoid{Y}$)-bundle, and it is the image of $f_Q$ via $\B(f_P)$. The image of the embedding $\B$ is the $2$-category of differentiable stacks (without the condition to be proper). For details of the embedding $\B$, see Lerman \cite[\S 4]{lerman10:_orbif}.
\end{remark}

\begin{proof}
 A bibundle corresponding a morphism of stacks can be obtained by by taking the following pullback \cite{lerman10:_orbif}:
\[
\xymatrix@C=10pt@R=10pt{
 P^F \times G \ar[rrr]^{\psi}\ar[dd]_{\pr_1} & & & 
 P^F \ar[rrr]^{\epsilon^F}\ar[dd]^{\pi^F} & & & M \ar[dd]^{\pi^M} \\
 & & &
 & & & \\
 P^F \ar[rrr]_{\pi^F} & & \ar@{=>}[ru]^{\sigma^F} & 
 \X \ar[rrr]_{F} & &  \ar@{=>}[ru]^{\alpha^F} & [M/G].
 }
\]
The triple $(\pr_1,P^F,\epsilon^F\circ\psi)$ is a bibundle from $P^F \rtimes G$ to $M \rtimes G$ corresponding to $F:\X \to [M/G]$. It is easy to see that the morphism of stacks $\B(\pr_1,P^F,\epsilon^F\circ\psi):[P^F/G] \to [M/G]$ is isomorphic to $\hat{F}$.
\end{proof}

\begin{lem}
 The functor $\Psi:\MorRep(\X,[M/G])\to\RG(\X,M)$ is essentially surjective.
\end{lem}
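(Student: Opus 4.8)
The plan is to construct, for each object $(\pi\colon P \to \X,\sigma,\epsilon)$ of $\RG(\X,M)$, a representable morphism whose image under $\Psi$ is isomorphic to it, guided by the explicit description of representable morphisms in Proposition \ref{prop:explicit-formula-for-representable-morphism}. First I would invoke Remark \ref{remark:category-PG-and-quotient-stack} to identify $\X$ with the quotient stack $[P/G]$; under this equivalence the given pair $(\pi,\sigma)$ corresponds to the canonical pair $(\pi^P,\sigma^P)$. Then I would define a morphism of stacks
\[
\hat F\colon [P/G] \to [M/G];\quad
\begin{cases}
(\pi_Q,\epsilon_Q) \mapsto (\pi_Q,\epsilon\circ\epsilon_Q) & \text{for objects,}\\
(f,\tilde f) \mapsto (f,\tilde f) & \text{for arrows,}
\end{cases}
\]
where an object of $[P/G]$ over $U$ is a principal $G$-bundle $\pi_Q\colon Q \to U$ together with a $G$-equivariant map $\epsilon_Q\colon Q \to P$. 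Because $\epsilon$ is $G$-equivariant, $\epsilon\circ\epsilon_Q$ is again $G$-equivariant, so $\hat F$ is a well-defined morphism of categories fibred in groupoids commuting with the base functors, hence a morphism of stacks.

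Next I would establish that $\hat F$ is representable. The cleanest route is to verify directly that the square with $\epsilon\colon P\to M$ along the top, the natural projection $\pi^M\colon M\to[M/G]$ on the right, $\hat F$ along the bottom and the natural projection $\pi^P\colon P\to[P/G]$ on the left is $2$-cartesian (the relevant $2$-morphism being the canonical identification coming from $G$-equivariance of $\epsilon$). This verification is entirely parallel to the essential-surjectivity computation already carried out in the proof of Proposition \ref{prop:G-bundle-is-stable-under-taking-pullback}. Granting it, the pullback of the atlas $\pi^M$ along $\hat F$ is the manifold $P$, and since atlases are stable under base change this yields representability of $\hat F$. Equivalently, $\hat F$ is the image $\B(\pr_1,P,\epsilon\circ\psi)$ of the bibundle from $P\rtimes G$ to $M\rtimes G$ determined by $\epsilon$, as in the remark preceding Proposition \ref{prop:explicit-formula-for-representable-morphism}; being induced by a $G$-equivariant map over the identity homomorphism of $G$, it is representable.

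Finally I would identify $\Psi(\hat F)$ with $(\pi,\sigma,\epsilon)$. Since the square above is $2$-cartesian, I may choose the pullback used in the definition of $\Psi$ to be $P^{\hat F}=P$, $\pi^{\hat F}=\pi^P$, $\epsilon^{\hat F}=\epsilon$, with the canonical $\alpha^{\hat F}$; then $\epsilon^{\hat F}=\epsilon$ on the nose. By construction of $\Psi$ the remaining datum $\sigma^{\hat F}$ is the $2$-morphism produced by Propositions \ref{prop:pullback-equipped-with-G-action} and \ref{prop:G-bundle-is-stable-under-taking-pullback} through the defining identity \eqref{eqn:property-for-sigma'} (equivalently \eqref{eqn:sigmaF-as-pullback}), and unwinding this with the natural projection atlas and the canonical $\alpha^{\hat F}$ shows $\sigma^{\hat F}=\sigma^P$; under the identification $\X\cong[P/G]$ of the first step, $\sigma^P$ corresponds to $\sigma$, whence $\Psi(\hat F)\cong(\pi,\sigma,\epsilon)$. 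The main obstacle is precisely this last matching: confirming that the $2$-morphism $\sigma^{\hat F}$ extracted from the universal property coincides with the canonical $\sigma^P$ requires tracing the uniqueness assertion of Lemma \ref{lem:universality} carefully through the definitions of the $2$-fibre product and of $\pi^P$, while checking that the square is genuinely $2$-cartesian (not merely $2$-commutative) is the other point demanding care, though both are modelled on arguments already present in the section.
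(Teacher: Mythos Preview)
Your approach is correct and essentially the same as the paper's: you both identify $\X$ with $[P/G]$, define the same morphism $\hat F$ (the paper calls it $F$), verify that the square with $\epsilon$, $\pi^M$, $\hat F$, $\pi^P$ is $2$-cartesian (the paper uses the identity transformation as the $2$-morphism and simply says ``it is easy to see''), and deduce representability. The only cosmetic difference is in the last step: you argue by choosing the pullback defining $\Psi(\hat F)$ to be $P$ itself and then matching $\sigma^{\hat F}$ with $\sigma^P$, whereas the paper keeps the already-fixed pullback $P^F$ and instead uses the universal property of the $2$-cartesian square to produce an explicit arrow $(\phi,\tau)\colon \Psi(F)\to(\pi,\sigma,\epsilon)$ in $\RG(\X,M)$, appealing to the computation done for $\Psi(\theta)$ to check it really is an arrow. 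Since $\Psi$ is defined with a \emph{fixed} choice of pullback for each $F$, the paper's formulation is slightly more honest, but your version is equivalent once one notes that any two pullbacks of a $2$-cartesian square are isomorphic in $\RG(\X,M)$; that implicit isomorphism is exactly the paper's $(\phi,\tau)$. One small quibble: the verification that the square is $2$-cartesian is not really ``parallel to the essential-surjectivity computation in Proposition~\ref{prop:G-bundle-is-stable-under-taking-pullback}''---that proposition checks a different equivalence---but it is nonetheless a routine unwinding of the definition of the $2$-fibre product in $[M/G]$.
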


\begin{proof}
 Let $(\pi:P \to \X,\sigma,\epsilon)$ be an object of $\RG(\X,M)$ and identify $\X$ with $[P/G]$. Consider the morphism of stacks defined by 
 \[
 F: [P/G] \to [M/G];\ 
 \begin{cases}
  (\pi',\epsilon') \mapsto (\pi',\epsilon\circ\epsilon') & \text{for objects,}\\
 (f,\tilde{f}) \mapsto (f,\tilde{f}) & \text{for arrows}.
 \end{cases}
 \]
It is easy to see that the following square is 2-cartesian: 
\[
 \xymatrix{
 P \ar[d]_{\pi^P}\ar[r]^{\epsilon} & M \ar[d]^{\pi^M}\\
 [P/G] \ar[r]_{F} & [M/G].
 }
\]
(The identity transformation gives a 2-morphism of stack in the above square.) This also implies that $F$ is representable. Then we have a unique smooth map $\phi:P^F \to P$ and a 2-morphism of stacks $\tau:\pi^P \to \phi\circ\pi$ such that the following diagram is 2-commutative.
\[
 \xymatrix@C=15pt@R=10pt{
 P^{F} 
 \ar[rd]^{\phi} 
 \ar@/^1pc/[drrrr]^{\epsilon^F} 
 \ar@/_1pc/[dddr]_{\pi^F}
 & & & & \\
 & P \ar[rrr]^{\epsilon} \ar[dd]^{\pi}
 & & & M \ar[dd]^{\pi^P} \\
 & \ar@{}[l]|{\quad\ \overset{\tau}{\Longrightarrow}} & & & \\
 & [P/G] \ar[rrr]_{F} & & & [M/G].
 }
\]
Modifying the above discussion for $\Psi(\theta)$ slightly, we can show that the pair $(\phi,\tau)$ gives an arrow from $\Psi(F)$ to $(\pi,\sigma,\epsilon)$ in $\RG(\X,[M/G])$.
\end{proof}

\begin{lem}
 The functor $\Psi:\MorRep(\X,[M/G])\to\RG(\X,M)$ is full.
\end{lem}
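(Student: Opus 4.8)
The plan is to reverse the computation performed in the proof of faithfulness: there the values $f_i^*\theta(x)$ were shown to be forced by $(\phi^\theta,\tau^\theta)$, and here I will run the same formula backwards to \emph{manufacture} a $2$-morphism $\theta$ out of a given arrow. So let $(\phi,\tau)\colon\Psi(F_1)\to\Psi(F_2)$ be an arrow in $\RG(\X,M)$; thus $\phi\colon P^{F_1}\to P^{F_2}$ is a $G$-equivariant diffeomorphism with $\epsilon^{F_2}\circ\phi=\epsilon^{F_1}$, and $\tau\colon\pi^{F_1}\to\pi^{F_2}\circ\phi$ is a $2$-morphism satisfying $\tau(pg)\sigma^{F_1}(p,g)=\sigma^{F_2}(\phi(p),g)\tau(p)$. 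Guided by (\ref{eqn:property-of-tau}), for an object $p\in(P^{F_1})_V$ I first set
\[
 \theta_0(p):=F_2(\tau(p))^{-1}\,\alpha^{F_2}(\phi(p))^{-1}\,\alpha^{F_1}(p),
\]
which, using $\epsilon^{F_2}\circ\phi=\epsilon^{F_1}$, is an arrow $F_1(\pi^{F_1}(p))\to F_2(\pi^{F_1}(p))$ in $[M/G]_V$. The goal is to show that the $\theta_0(p)$ assemble into a single $2$-morphism $\theta\colon F_1\to F_2$ with $\Psi(\theta)=(\phi,\tau)$.

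The first task is to check that $\theta_0$ is compatible with the atlas structure, i.e.\ that it descends along $\pi^{F_1}$. Concretely, for $(p,g)\in(P^{F_1}\times G)_V$ the arrow $\sigma^{F_1}(p,g)\colon\pi^{F_1}(p)\to\pi^{F_1}(pg)$ in $\X$ must intertwine $\theta_0(p)$ and $\theta_0(pg)$, that is $F_2(\sigma^{F_1}(p,g))\circ\theta_0(p)=\theta_0(pg)\circ F_1(\sigma^{F_1}(p,g))$. I expect this to be a direct manipulation using the two instances of (\ref{eqn:sigmaF-as-pullback}) for $\sigma^{F_1}$ and $\sigma^{F_2}$, the intertwining identity $\tau(pg)\sigma^{F_1}(p,g)=\sigma^{F_2}(\phi(p),g)\tau(p)$, and the $G$-equivariance of $\epsilon^{F_1},\epsilon^{F_2}$ and $\phi$; naturality of $\theta_0$ with respect to arrows of $P^{F_1}$ covering $\id_V$ is similar but easier. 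Granting this, I would define $\theta$ on an arbitrary object $x\in\X_U$ by the descent construction already used in the faithfulness proof: pull $\pi^{F_1}$ back along $x$ to get the principal $G$-bundle $\pi_x\colon P_x\to U$ together with $\beta_x\colon x\circ\pi_x\to\pi^{F_1}\circ\epsilon_x$, choose a covering family $\{U_i\to U\}$ trivialising $P_x$ with sections $s_i$ and local lifts $p_i=\epsilon_x(s_i)\in(P^{F_1})_{U_i}$, and let $f_i^*\theta(x)$ be the arrow obtained from $\theta_0(p_i)$ transported through $F_1(\beta_x(s_i))$ and $F_2(\beta_x(s_i))$. The compatibility of $\theta_0$ just established guarantees that the $f_i^*\theta(x)$ agree on overlaps, so the gluing condition for the stack $[M/G]$ produces a unique $\theta(x)\colon F_1(x)\to F_2(x)$; independence of the auxiliary choices follows in the same manner.

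It then remains to verify that $\theta$ is a genuine natural transformation and that $\Psi(\theta)=(\phi,\tau)$. Naturality with respect to an arbitrary arrow $a\colon x\to y$ in $\X$ can be tested locally after pulling back along the atlas, where it reduces to the compatibility of $\theta_0$ from the previous step, so no new difficulty arises. For the final identification, observe that by construction $\theta$ satisfies $\alpha^{F_2}(\phi(p))F_2(\tau(p))=\alpha^{F_1}(p)\theta(\pi^{F_1}(p))^{-1}$, which is exactly relation (\ref{eqn:property-of-tau}) for the pair $(\phi,\tau)$; since $(\phi^\theta,\tau^\theta)=\Psi(\theta)$ is the \emph{unique} pair satisfying this relation (by the universal property of the $2$-cartesian square defining $P^{F_2}$, i.e.\ Lemma \ref{lem:universality}), we conclude $\phi^\theta=\phi$ and $\tau^\theta=\tau$, hence $\Psi(\theta)=(\phi,\tau)$, proving fullness. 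The main obstacle is the gluing step: the real content is checking that the locally defined arrows $f_i^*\theta(x)$ satisfy the overlap condition, and keeping the conventions straight so that the $G$-equivariance of $\phi$ together with the intertwining identity for $\tau$ yield precisely that condition.
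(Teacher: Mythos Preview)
Your proposal is correct but follows a genuinely different route from the paper's proof. The paper does not reverse the faithfulness argument at all: instead it observes that the $G$-equivariant diffeomorphism $\phi\colon P^{F_1}\to P^{F_2}$ induces an equivalence $E\colon[P^{F_1}/G]\to[P^{F_2}/G]$, invokes Proposition~\ref{prop:explicit-formula-for-representable-morphism} to replace each $F_i$ by the explicit morphism $\hat F_i$ built from $\epsilon^{F_i}$, and then notes that $\epsilon^{F_2}\circ\phi=\epsilon^{F_1}$ forces $\hat F_1=\hat F_2\circ E$ on the nose; composing the resulting identifications yields the desired $2$-morphism $F_1\to F_2$.

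The trade-offs are clear. The paper's argument is short and conceptual: it leverages the explicit model of the quotient stack so that no descent or gluing is needed, but it leaves implicit the verification that the produced $2$-morphism actually maps to $(\phi,\tau)$ under $\Psi$ (this is true, but one has to trace through the chain of identifications). Your argument is more laborious---you must verify the compatibility $F_2(\sigma^{F_1}(p,g))\theta_0(p)=\theta_0(pg)F_1(\sigma^{F_1}(p,g))$ (which does follow exactly as you outline from (\ref{eqn:sigmaF-as-pullback}) and the intertwining identity for $\tau$), carry out the gluing, and check naturality---but it has the virtue of constructing $\theta$ explicitly and of making the identity $\Psi(\theta)=(\phi,\tau)$ an immediate consequence of the uniqueness in Lemma~\ref{lem:universality}. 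Your approach is also independent of Proposition~\ref{prop:explicit-formula-for-representable-morphism}, which the paper's proof relies on essentially.
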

\begin{proof}
 Let $F_1, F_2 \in \MorRep(\X,[M/G])$. Suppose we have an arrow $(\phi,\tau)$ from $\Psi(F_1)$ to $\Psi(F_2)$. Since $\phi$ is a $G$-equivariant diffeomorphism, it is obvious that the following morphism $E$ is an equivalence:
 \[
 E: [P^{F_1}/G] \to [P^{F_2}/G];\ 
 \begin{cases}
  (\pi,\epsilon) \mapsto (\pi,\phi\circ\epsilon) & \text{for objects,}\\
  (f,\tilde{f}) \mapsto (f,\tilde{f}) & \text{for arrows}.
 \end{cases}
 \]
 We identify $[P^{F_2}/G]$ with $[P^{F_1}/G]$ via the above morphism. Then we can also identify $F_1$ and $F_2$ because of Proposition \ref{prop:explicit-formula-for-representable-morphism} and $\epsilon^{F_2}\circ\phi=\epsilon^{F_1}$. Therefore the functor $\Psi$ is full.
\end{proof}

\section{Symplectic vortex equation}

In this section, we define symplectic vortex equations over a compact Riemann surface $\mathbf\Sigma$, regarding $\mathbf\Sigma$ as a type-R stack $\X$. First we fix notations for an orbifold Riemann surface. Secondly we see the definition of (representable) pseudo-holomorphic curve from $\X$ to a Marsden--Weinstein quotient (also known as a symplectic quotient). After that we define symplectic vortex equations and an energy functional. Finally we discuss the moduli space of solutions to the equations for a linear action of the circle group $S^1$ on the complex plane $\C$.

In this section, $G$ is a compact and connected Lie group. The Lie algebra of $G$ is denoted by $\g$. We fix an invariant inner product $\pair{\ ,\ }$ on $\g$ to identify $\g$ with its dual $\g^\dual$.

A Hamiltonian $G$-space is a symplectic manifold $(M,\omega)$ equipped with a (right) $G$-action and a $G$-equivariant map $\mu:M \to \g$ satisfying 
\begin{equation}
 \mu(mg) = \Ad(g^{-1})\mu(m) \ \ \text{for any}\  m \in M\ \text{and}\ g \in G
\end{equation}
and 
\begin{equation}
 \label{eqn:moment-map}
 d\pair{\mu,\xi} = -\iota(\xi^M)\omega \ \ \text{for any}\ \xi \in \g.
\end{equation}
Here $\Ad$ is the adjoint representation of $G$. Note that the $G$-action preserves $\omega$ since $G$ is connected. This map $\mu$ is called a moment map for the $G$-action. 

\subsection{An orbifold Riemann surface, revisited}
\label{section:orbifold-Riem-surface-revisited}

Let $\mathbf\Sigma = (\Sigma,j,\mathbf{z},\mathbf{m})$ be an orbifold Riemann surface and $\X$ the type-R stack corresponding to $\mathbf\Sigma$ (\S \ref{sec:orbifold-Riemann-surface}) and $(\pi:P\to\X,\sigma)$ an object of $\PG(\X)$. Since $\X$ is type-R, the right $G$-action on $P$ is locally free and effective (cf. \S \ref{sec:Autmorphisms}). In particular the space of connections
\[
 \A(P) = \{
 A \in \Omega^1(P,\g) \mid 
 \iota(\xi^P)A = \xi\ (\forall \xi \in \g)
 \ \ \text{and}\ \ 
 \psi_g^*A=\Ad(g^{-1})A\ (\forall g \in G)
 \}
\]
is nonempty.

The sheaf over $P$ induced by the tangent sheaf $\Ts_\X$ is the sheaf of sections of the bundle $TP/\g^P$. Note that the bundle $TP/\g^P$ has a complex structure coming from the complex structure $j$ on $\X$, and we also denote by $j$ the complex structure on $TP/\g^P$. Since $j$ is compatible with the source and target maps, the identity $(j[v])g = j[vg]$ holds for any $v \in TP$ and $g \in G$. 

We will need local slices for integrations over $\X$ and \S \ref{section:circle-actions}. Let $p \in P$ and $(U,\phi,\Gamma)$ a triple of
\begin{enumerate}
 \item \squashup
       a finite subgroup $\Gamma \subset G$, which acts on $\C$ in an orthogonal linear way, 
 \item \squashup
       a $\Gamma$-invariant open neighbourhood $U$ of $0$ in $\C$, and
 \item \squashup
       a smooth map $\phi:U \to P$ satisfying $\phi(0)=p$ and $\phi(gu)=\phi(u)g^{-1}$.
\end{enumerate}
\squashup
The triple is called a (complex) slice at $p \in P$ if the map 
\[
 U \times_{\Gamma} G \to P;\ [u,a] \mapsto \phi(u)a
\]
is a $G$-equivariant diffeomorphism onto a $G$-invariant open neighbourhood of $p$. Here $\Gamma$ acts on $U \times G$ by $(u,a) \cdot g = (g^{-1}u,g^{-1}a)$ ($u \in U$, $a \in G$ and $g \in \Gamma$). 

We may choose a family $\{(U_i,\phi_i,\Gamma_i)\}_i$ of slices so that $\{\phi_i(U_i)G\}$ is an open cover of $P$. Let $Z_0$ be the disjoint union $\bigsqcup_i U_i$ and $\iota: Z_0 \to P$ is the map whose restriction to $U_i$ is the inclusion map $\phi_i:U_i \to P$. Let $Z_1$ be a fibred product of the diagram:
\[
 \xymatrix@C=60pt@R=20pt{
 Z_1 \ar[d]_{(\tgt,\src)}\ar[r]^{\iota_1} & P \times S^1 \ar[r]^{\pr_2} \ar[d]^{(\tgt,\src)} & S^1\\
 Z_0 \times Z_0 \ar[r]_{\iota \times \iota} & P \times P.&
 }
\]
Then we obtain a proper \'etale groupoid $\gpoid{Z}$ which is equivalent to $P \rtimes G$ in the bicategory $\Bi$ (cf.\! Lerman \cite[Definition 2.25]{lerman10:_orbif}). It is easy to see that the composition $\pi \circ \iota:Z_0 \to \X$ is an atlas of $\X$, and the groupoid $\gpoid{Z}$ is a presentation associated to the atlas. Note that the groupoid $\gpoid{Z}$ has a partition of unity \cite{behrend04:_cohom}. 

The space of 2-forms on $\X$ is given by the space of basic 2-forms on $P$
\[
 \Omega^2([P/G]) = 
\{ \eta \in \Omega^2(P) \mid 
 \iota(\xi^P)\eta = 0\ (\forall \xi \in \g)
 \ \text{ and }\ 
 \psi_g^* \eta = \eta\ (\forall g \in G) \}.
\]
In terms of $\gpoid{Z}$, the space of 2-forms on $\X$ is the space 
\[
 \Omega^2(\gpoid{Z}) = \{
 \theta \in \Omega^2(Z_0) \mid \src^*\theta = \tgt^*\theta
 \}
\]
and an explicit correspondence between $\Omega^2([P/G])$ and $\Omega^2(\gpoid{Z})$ is given by $\iota^*\eta=\theta$ for $\eta \in \Omega^2([P/G])$ and $\theta \in \Omega^2(\gpoid{Z})$.

We denote by $\rho:Z_1 \to S^1$ the composition of $\pr_2 \circ \iota_1$. The map $\rho$ satisfies $\iota(\tgt\alpha)\rho(\alpha)=\iota(\src\alpha)$ for any $\alpha \in Z_1$. The map
\[
 \epsilon:Z_0 \times S^1 \to P;\ (z,t) \mapsto \iota(z)t
\]
is an atlas of $P$, and a presentation associated to the atlas $\epsilon:Z_0 \times S^1 \to P$ is given by
\[
 Z_1 \times S^1 \rightrightarrows Z_0 \times S^1;\ \
 \src(\alpha,t)=(\src\alpha,t),\ \
 \tgt(\alpha,t)=(\tgt\alpha,\rho(\alpha)t).
\]
Note that the map $\epsilon:Z_0 \times S^1 \to P$ is $S^1$-equivariant with respect to the right $S^1$-action on $Z_0 \times S^1$ defined by $(z,t) \cdot t' = (z,tt')$.

\subsection{Pseudo-holomorphic curves in a Marsden--Weinstein quotient}
\label{section:pseudo-hol-map-in-symp-quot}

For details of (smooth) Marsden--Weinstein quotients, see McDuff--Salamon \cite{mcduff98:_introd}. Cieliebak--Gaio--Salamon \cite{cieliebak00:_j_hamil} is also helpful for our setting.

Let $(M,\omega)$ be a Hamiltonian $G$-space with a moment map $\mu:M \to \g$. Assume that $0 \in \g$ is a regular value of $\mu$. Then $\zl$ is a $G$-invariant submanifold of $M$ and the $G$-action on $\zl$ is locally free. Therefore the quotient stack $\MW{M}G=[\zl/G]$ is Deligne--Mumford. 

The quotient stack $\MW{M}G$ inherits a symplectic form from $(M,\omega)$ as follows. Let $\omega_\mu$ be the restriction of $\omega$ to $\zl$. A direct calculation shows that the 2-form $\omega_\mu$ is basic: $\omega_\mu \in \Omega^2(\MW{M}G)$.
 Since the kernel of the 2-form $\omega_\mu$ at $m \in \zl$ coincides with $\g^{\mu^{-1}(0)}(m)$, $\omega_\mu$ is nondegenerate and therefore $\omega_\mu$ gives a symplectic form on $\MW{M}G$. The symplectic Deligne--Mumford stack $\MW{M}G$ is called the Marsden--Weinstein quotient. 

Choose an almost complex structure $J$ compatible with the $G$-action and $\omega$ i.e.\! $J$ is an almost complex structure on $M$ such that $\psi_g \circ J = J \circ \psi_g$ for any $g \in G$ and the bilinear form 
\begin{equation}
 \label{eqn:metric-coming-from-omega-and-J}
 g(v_1,v_2) = \omega(v_1,Jv_2) 
\end{equation}
defines a Riemannian metric on $M$. Since the $G$-action on $\zl$ is locally free, $\g^{\mu^{-1}(0)}$ is a trivial subbundle of $T\zl$. The sheaf induced by the tangent sheaf $\Ts_{\MW{M}G}$ is the sheaf of sections of the quotient bundle $T\zl/\g^{\mu^{-1}(0)}$. The bundle $T\zl/\g^{\mu^{-1}(0)}$ inherits a complex structure from $TM$ and it gives rise to an almost complex structure on $\Ts_{\MW{M}G}$.


We consider a representable morphism from a compact orbifold Riemann surface $(\Sigma,j,\mathbf{z},\mathbf{m})$ to the Marsden--Weinstein quotient $\MW{M}G$. We regard the compact orbifold Riemann surface as a type-R stack $\X$ as Section \ref{sec:orbifold-Riemann-surface}. 

By Theorem \ref{thm:category-of-representable-morphisms}, a representable morphism $F$ from $\X$ to $\MW{M}G$ corresponds to an object of the category $\RG(\X,\zl)$, i.e.\! a pair of an object $(\pi:P\to\X,\sigma)$ of $\PG(\X)$ and a $G$-equivariant map $u:P \to \zl$:
\[
 \xymatrix@C=15pt@R=10pt{
 P \ar[rrr]^{u}\ar[dd]_{\pi} & & & \zl \ar[dd]^{\pi^M} \\
 & & & \\
 \X \ar[rrr]_{F} & & \ar@{=>}[ru]^{\alpha} & \MW{M}G.
 }
\]
We will stick with the presentation $P \rtimes G$ to describe geometric concepts on $\X$ (cf.\! \S \ref{section:orbifold-Riem-surface-revisited}). 

\begin{dfn}
 The representable morphism $F:\X\to\MW{M}G$ is said to be \emph{pseudo-holomorphic} if the map $du:TP/\g^P\to T\zl/\g^M$ is compatible with the complex structures: 
 \begin{equation}
  \label{eqn:pseudo-holomorphic-map}
   J \circ du = du \circ j \ \ \text{as a map}\ \ TP/\g^P\to T\zl/\g^{\zl}.
 \end{equation}
\end{dfn}
\begin{remark}
 This definition is independent of the choice of the object $(\pi,\sigma,u)$ of $\RG(\X,\mu^{-1}(0))$ up to isomorphism.
\end{remark}

The above condition (\ref{eqn:pseudo-holomorphic-map}) can be described more explicitly by using a connection. For a connection $A \in \A(P)$ the covariant derivative $\dA u \in \Omega^1(P,u^*TM)$ of $u$ is defined by
\[
 \dA u: TP \to u^*TM;\ 
 \dA u(v) = du(v) - A(v)^M(u(p)) \ \text{ for }\ v \in T_pP.
\]
We denote by $\brdA u$ the complex antiholomorphic part of $\dA u$:
\[
 \brdA u = \dfrac{1}{2}\left( \dA u + J \circ \dA u \circ j \right) \in \Omega^1(P,u^*TM). 
\]
Here $\dA u \circ j:TP \to TM$ is defined as follows. For $v \in T_pP$ there is a tangent vector $\tilde{v} \in T_pP$ such that $[\tilde{v}] = j[v]$. Define $(\dA u \circ j)(v) = \dA u(\tilde{v})$. This definition is independent of the choice of $\tilde{v}$ because the identity $\dA u (\xi^P(p)) = 0$ holds for any $\xi \in \g$. 

\begin{prop}
 \label{prop:Cauchy-Riemann-equation}
 A representable morphism $F:\X\to\MW{M}G$ is pseudo-holomorphic if and only if there is a connection $A \in \A(P)$ satisfying that $\brdA u=0$.
\end{prop}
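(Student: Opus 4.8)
The plan is to reduce the equation $\brdA u=0$ to a pointwise linear-algebra problem over $\zl$, in which the \emph{vertical} part can be solved by choosing the connection and the \emph{horizontal} part turns out to be exactly the pseudo-holomorphicity condition (\ref{eqn:pseudo-holomorphic-map}). For $m\in\zl$ write $L_m:\g\to T_mM$, $L_m\xi=\xi^M(m)$; since the $G$-action on $\zl$ is locally free, $L_m$ is injective with image $\g^{\zl}(m)=\g^M(m)$. The moment-map identity (\ref{eqn:moment-map}) gives $T_m\zl=(\g^M(m))^{\perp_\omega}$, and together with the metric (\ref{eqn:metric-coming-from-omega-and-J}) it yields $\g^M(m)\cap J\g^M(m)=0$ (if $J\xi^M=\eta^M\in T_m\zl$ then $g(\xi^M,\xi^M)=0$). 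Hence one has a $G$-equivariant direct-sum decomposition
\[
 T_mM=\g^M(m)\oplus J\g^M(m)\oplus H_m,\qquad H_m=\bigl(\g^M(m)\oplus J\g^M(m)\bigr)^{\perp_g},
\]
with $H_m\subset T_m\zl$, $T_m\zl=\g^M(m)\oplus H_m$, and $JH_m=H_m$. Under the resulting identification $T\zl/\g^{\zl}\cong H$, the complex structure on the quotient is $J|_H$. I denote by $\Pi_\g,\Pi_{J\g},\Pi_H$ the three projections.

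First I would record that, for any $A\in\A(P)$, the covariant derivative satisfies $\dA u(\xi^P)=0$ and $\dA u(v)\in T_{u(p)}\zl$, and that $\dA u(v)-du(v)=-A(v)^M\in\g^{\zl}$, so its $H$-component $(\dA u(v))_H=(du(v))_H$ is independent of $A$ and represents the class $[du(v)]\in T\zl/\g^{\zl}$. Decomposing
\[
 \brdA u(v)=\tfrac12\bigl(\dA u(v)+J\,\dA u(\widetilde v)\bigr),\qquad [\widetilde v]=j[v],
\]
along the three summands gives $\Pi_\g\brdA u(v)=\tfrac12(\dA u(v))_\g$, $\Pi_{J\g}\brdA u(v)=\tfrac12 J(\dA u(\widetilde v))_\g$, and $\Pi_H\brdA u(v)=\tfrac12\bigl((\dA u(v))_H+J(\dA u(\widetilde v))_H\bigr)$. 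The last expression represents the anti-holomorphic part of the induced map $du:TP/\g^P\to T\zl/\g^{\zl}$, so $\Pi_H\brdA u=0$ is precisely (\ref{eqn:pseudo-holomorphic-map}); while $\Pi_\g\brdA u=\Pi_{J\g}\brdA u=0$ both amount to the single vertical condition $(\dA u(v))_\g=0$ for all $v$ (using that $\widetilde v$ ranges over $TP/\g^P$ and $\dA u(\widetilde v)$ depends only on $[\widetilde v]$). Thus $\brdA u=0$ if and only if $F$ is pseudo-holomorphic \emph{and} $(\dA u)_\g\equiv 0$.

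The necessity direction is then immediate: $\brdA u=0$ forces $\Pi_H\brdA u=0$, i.e.\ (\ref{eqn:pseudo-holomorphic-map}). For sufficiency, assuming $F$ pseudo-holomorphic I would produce a connection annihilating the vertical part by setting
\[
 A(v)=L_{u(p)}^{-1}\bigl(\Pi_\g\,du(v)\bigr),\qquad v\in T_pP.
\]
This is $\R$-linear in $v$, so $A\in\Omega^1(P,\g)$. Since $du(\xi^P)=\xi^M=L(\xi)$ one gets $\iota(\xi^P)A=\xi$, and the $G$-equivariance of the decomposition together with $L_{mg}\circ\Ad(g^{-1})=d\psi_g\circ L_m$ gives $\psi_g^*A=\Ad(g^{-1})A$; hence $A\in\A(P)$. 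For this $A$ one has $A(v)^M=\Pi_\g du(v)$, so $(\dA u(v))_\g=\Pi_\g\bigl(du(v)-A(v)^M\bigr)=0$, and by the dichotomy of the previous paragraph $\brdA u=0$.

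The main obstacle is conceptual rather than computational: because $J$ does not preserve $T\zl$, the form $\brdA u$ a priori has a component in $J\g^M$ transverse to $\zl$, and the whole argument hinges on separating the three pieces correctly—recognising that the $H$-piece is the connection-independent obstruction (the pseudo-holomorphicity of the induced map), while the $\g^M$- and $J\g^M$-pieces are both governed by the single $\g$-valued quantity $(\dA u)_\g$, which a unique admissible connection can kill. Checking that the connection just defined is smooth and satisfies both defining properties of $\A(P)$ is the only place where the regular-value hypothesis and local freeness are genuinely used (they are exactly what makes $L_m$ invertible and the decomposition well defined and smooth).
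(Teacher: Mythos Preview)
Your proof is correct and follows essentially the same route as the paper: the same orthogonal splitting $T_mM=H_m\oplus\g^M(m)\oplus J\g^M(m)$ (the paper writes $N$ for your $H$), the same identification of the $H$-component of $\brdA u$ with the pseudo-holomorphicity condition, and the same construction of the connection $A$ from the $\g^M$-component of $du$. Your treatment is in fact more explicit than the paper's, which merely asserts that the constructed $A$ lies in $\A(P)$ and satisfies $\brdA u=0$ without writing out the projection calculus or the equivariance check.
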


\begin{proof}
First we note that we have the orthogonal decomposition 
\begin{equation}
 \label{eqn:orthogonal-decomposition}
 T_{u(p)}M =  N_{u(p)} \oplus \g^M(u(p)) \oplus J\g^M(u(p)).
\end{equation}
Here $N_{u(p)}$ is the orthogonal complement of $\g^M(u(p))$ in $T_{u(p)}\zl$. We can canonically identify the bundle $T\zl/\g^{\zl}$ with $N \to \zl$. We denote by $\nu_1$ and $\nu_2$ the projections from $N_{u(p)} \oplus \g^M(u(p))$ to the first and second component respectively. The condition (\ref{eqn:pseudo-holomorphic-map}) is equivalent to the identity $J\nu_1(du(p)v) = \nu_1(du(p)jv)$ for every $p \in P$ and $v \in T_pP$.

 Suppose a representable morphism $F:\X\to\MW{M}G$ to be pseudo-holomorphic. Define a 1-form $A \in \Omega^1(P,\g)$ by the composition $T_pP \to T_{u(p)}\zl \to \g^{\zl}(u(p)) \cong \g$. It is easy to see that $A \in \A(P)$ and $\brdA u=0$.

 Conversely suppose that we have a connection $A \in \A(P)$ satisfying $\brdA u=0$ i.e.
\begin{equation*}
 J(\dA u)v = (\dA u)jv
\end{equation*}
for any $v \in TP$. Applying the orthogonal decomposition (\ref{eqn:orthogonal-decomposition}) to the above identity, we can easily see that the identity $J\nu_1(du(p)v) = \nu_1(du(p)jv)$ holds for every $p \in P$ and $v \in T_pP$.
\end{proof}

\subsection{Symplectic vortex equations}
\label{section:SVE}

The symplectic vortex equations are defined for the following data:
\begin{itemize}
 \item A Hamiltonian $G$-space $(M,\omega)$ with a moment map $\mu:M \to \g$.
 \item \squashup
       An almost complex structure $J$ of $M$ compatible with the $G$-action.
 \item \squashup
       A compact orbifold Riemann surface $(\Sigma,j,\mathbf{z},\mathbf{m})$. We regard it as a stack $\X$.
 \item \squashup
       A volume form $\dvol_\X$ of $\X$.
 \item \squashup
       An object $(\pi^P:P\to\X,\sigma)$ of the category $\PG(\X)$. 
\end{itemize}

A volume form $\dvol_\X$ of $\X$ is a nowhere-vanishing 2-form on $\X$. Using a $G$-invariant Riemannian metric on $P$, we can easily see that a volume form of $\X$ always exists. Moreover for any 2-form $\alpha$ on $\X$ there is a unique function $f \in \O(\X)$ such that $\alpha = f\dvol_\X$. 

Since $\X$ is 2-dimensional, a choice of volume form $\dvol_\X$ gives an orientation on $\X$. On the other hand, the complex structure $j$ on $\X$ also gives an orientation on $\X$. Thus we choose a volume form $\dvol_\X$ so that both orientations are the same. More explicitly, we assume that the following inequality holds:
\begin{equation}
 \dvol_\X(v,\tilde{v}) > 0 \ \text{ for any nonzero }\ [v] \in T_pP/\g^P.
\end{equation}
Here $\tilde{v}$ is a tangent vector at $p$ satisfying $[\tilde{v}]=j[v]$, and $[\ ]$ is the projection map from $TP$ to the quotient bundle $TP/\g^P$. The left hand side of the inequality is independent of the choice of $\tilde{v}$ since $\dvol_\X$ is basic.


Fix a volume form $\dvol_\X$ of $\X$. We define the Hodge $*$-operator by
\[
 *: \Omega^2(\X) \to \Omega^0(\X);\ *\alpha = *(f\dvol_\X) = f.
\] 
The Hodge $*$-operator can naturally extend to a map $\Omega^2(\X,\g) \to \Omega^0(\X,\g)$. 

\begin{dfn}
 \label{dfn:symplectic-vortex-equations}
 The \emph{symplectic vortex equations} are the partial differential equations
 \begin{equation}
  \label{eqn:symplectic-vortex-equations}
   \brdA u = 0, \quad *F_A + \mu(u) = 0
 \end{equation}
 for $(A,u) \in \A(P) \times \Cinf_G(P,M)$. Here $\Cinf_G(P,M)$ is the space of $G$-equivariant smooth maps from $P$ to $M$. Note that the curvature $F_A$ belongs to $\Omega^2(\X,\g)$.
\end{dfn}

\begin{remark}
 \label{remark:adiabatic-limit}
 Assume that $0 \in \g$ is a regular value of the moment map $\mu$. If we replace $\dvol_\X$ with $\epsilon^{-2}\dvol_\X$ for $\epsilon > 0$, then the equations (\ref{eqn:symplectic-vortex-equations}) become
 \[
 \brdA u = 0, \quad *F_A + \epsilon^{-2}\mu(u) = 0.
 \]
 The limit of the above equations of $\epsilon$, as $\epsilon$ approaches to $0$, are nothing but the equations for pseudo-holomorphic morphisms from $\X$ to $\MW{M}G$ (cf. Proposition \ref{prop:Cauchy-Riemann-equation}):
 \[
 \brdA u = 0, \quad \mu(u) = 0.
 \]
\end{remark}

We define the the \emph{energy functional} (or the action functional) $E:\A(P)\times\Cinf_G(P,M)\to\R$ by 
\begin{equation}
 E(A,u)= \frac{1}{2}\int_{\X} \left( |\dA u|^2 + |F_A|^2 + |\mu(u)|^2 \right)\dvol_\X.
\end{equation}
Here we use the $G$-invariant inner product on $\g$ and the Riemannian metric on $M$ defined as (\ref{eqn:metric-coming-from-omega-and-J}). In the above formula, $|\dA u|^2$ can be calculated as follows. First we note that the complex structure $j$ on $TP/\g^P$ and the volume form $\dvol_\X$ define a metric on $TP/\g^P$: 
\[
 g_\X([v],[w]) = \dvol_\X(v,\tilde{w}) \ \ \text{for}\ \ v,w \in T_pP,
\]
where $\tilde{w}$ is a tangent vector at $p$ satisfying $[\tilde{w}]=j[w]$, and the above definition is well-defined because $\dvol_\X$ is basic. Since $TP/\g^P$ is a vector bundle of rank 2, it is easy to see that $g_\X$ form a metric on $TP/\g^P$. Then define
\begin{equation}
 \label{eqn:norm}
 |\dA u| = \|[v]\|^{-1}\sqrt{|\dA u(v)|^2+|(\dA u \circ j)(v)|^2} 
\end{equation}
for nonzero $[v] \in TP/\g^P$, where $\|[v]\|=\sqrt{g_\X([v],[v])}$. The right hand side is independent of the choice of $[v]$.

\begin{prop}
 For every $(A,u) \in \A(P)\times\Cinf_G(P,M)$, the identity
 \begin{equation}
  \label{eqn:energy-identity}
 E(A,u) = \int_\X \left(\bigl|\brdA u\bigr|^2+\frac{1}{2}\bigl|*F_A+\mu(u)\bigr|^2 \right) 
\dvol_\X + R(\omega,\mu,u)
 \end{equation}
 holds. Here 
 \begin{equation*}
 R(\omega,\mu,u)
  = \int_{\X} \bigl( (\dA u)^*\omega - \pair{\mu(u),F_A} \bigr)
 \end{equation*}
 and $\bigl|\brdA u\bigr|$ is calculated in a similar way to $|\dA u|$.
\end{prop}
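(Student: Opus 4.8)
The plan is to reduce the global identity to a pointwise algebraic identity between basic $2$-forms on $P$ and then integrate. Since $\int_\X$ is a linear functional on $\Omega^2(\X)$, defined via the presentation $\gpoid{Z}$ with a partition of unity (cf.\! \S\ref{section:orbifold-Riem-surface-revisited}), it suffices to prove the two pointwise decompositions below, check that each integrand is a genuine basic $2$-form on $P$, and add. Throughout I would work at a point $p\in P$ with an oriented $g_\X$-orthonormal frame $\{[v],j[v]\}$ of $(TP/\g^P)_p$, so that $\dvol_\X$ is the corresponding area form and formula (\ref{eqn:norm}) reads $|\dA u|^2=|\dA u(v)|^2+|(\dA u\circ j)(v)|^2$.

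First I would handle the map part. Set $a=\dA u(v)$ and $b=(\dA u\circ j)(v)=\dA u(\tilde v)$, so that $|\dA u|^2=|a|^2+|b|^2$ and $(\dA u)^*\omega$ evaluates to $\omega(a,b)$ on the frame $(v,\tilde v)$. From $\brdA u=\tfrac12(\dA u+J\circ\dA u\circ j)$ a short computation gives $2|\brdA u|^2=|a+Jb|^2$, and expanding with $g(a,Jb)=\omega(a,J J b)=-\omega(a,b)$, using the metric (\ref{eqn:metric-coming-from-omega-and-J}), yields $|a+Jb|^2=|a|^2+|b|^2-2\omega(a,b)$. Hence
\begin{equation*}
\tfrac12|\dA u|^2\,\dvol_\X=|\brdA u|^2\,\dvol_\X+(\dA u)^*\omega
\end{equation*}
as $2$-forms. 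One must check $(\dA u)^*\omega\in\Omega^2(\X)$: since $A(\xi^P)=\xi$ and $u$ is $G$-equivariant, $\dA u(\xi^P)=du(\xi^P)-A(\xi^P)^M=0$, so the form is horizontal, and $G$-equivariance of $u$ together with $G$-invariance of $\omega$ makes it invariant.

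Next I would treat the curvature--moment term. Because $\dvol_\X$ has unit norm we have $|F_A|^2=|*F_A|^2$, and completing the square gives the pointwise identity
\begin{equation*}
\tfrac12\bigl(|F_A|^2+|\mu(u)|^2\bigr)\dvol_\X=\tfrac12\bigl|*F_A+\mu(u)\bigr|^2\dvol_\X-\pair{\mu(u),F_A},
\end{equation*}
where I reinterpret $\pair{*F_A,\mu(u)}\dvol_\X=\pair{\mu(u),F_A}$ via $F_A=(*F_A)\dvol_\X$; $\Ad$-invariance of $\pair{\ ,\ }$ and the equivariance $\mu(u(pg))=\Ad(g^{-1})\mu(u(p))$ together with $\iota(\xi^P)F_A=0$ show this $\g$-pairing is again basic. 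Adding the two displayed identities and applying $\int_\X$ produces exactly (\ref{eqn:energy-identity}) with the stated $R(\omega,\mu,u)$.

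I expect the only real care---rather than a genuine obstacle---to lie in the stack bookkeeping: verifying that all four integrands are horizontal and $G$-invariant (hence descend to $\X$ so that $\int_\X$ is defined on them), and that the norms built from $TP/\g^P$ through (\ref{eqn:norm}) are independent of the chosen representative $v$. The underlying linear-algebra identities are purely pointwise and identical to the classical case over a smooth surface, so no new analysis is required beyond the formalism already set up in \S\ref{sec:diff-forms-and-vector-fields-on-stack} and \S\ref{section:orbifold-Riem-surface-revisited}.
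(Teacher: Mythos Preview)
Your argument is correct: the two pointwise identities
\[
\tfrac12|\dA u|^2\,\dvol_\X=|\brdA u|^2\,\dvol_\X+(\dA u)^*\omega,
\qquad
\tfrac12\bigl(|F_A|^2+|\mu(u)|^2\bigr)\dvol_\X=\tfrac12|*F_A+\mu(u)|^2\,\dvol_\X-\pair{\mu(u),F_A}
\]
hold exactly as you derive them, and your checks that $(\dA u)^*\omega$ and $\pair{\mu(u),F_A}$ are horizontal and $G$-invariant (hence basic) are right.

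The paper organises the same computation differently: it passes to the \'etale presentation $\gpoid{Z}$ via the slices $\phi_i:U_i\to P$, uses a partition of unity on $Z_0$, writes $\phi_i^*\dA u$, $\phi_i^*F_A$ and $\phi_i^*\dvol_\X$ explicitly in the coordinates $s+\i t$ on $U_i\subset\C$, and then defers the remaining algebraic manipulation to Proposition~3.1 of Cieliebak--Gaio--Salamon. Your route is more intrinsic---you stay on $P$, use only an orthonormal frame of $TP/\g^P$, and never invoke the slice charts or the external reference---while the paper's route has the virtue of making the reduction to the classical smooth-surface case completely transparent. The underlying linear algebra is identical; only the packaging differs.
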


\begin{remark}
 We can see as follows that the value $R(\omega,\mu,u)$ is independent of the choice of $A \in \A(P)$. First we consider the Cartan model $\Omega^2_G(P)$ for the $G$-space $P$. Then $u^*(\omega-\mu)$ is a $G$-equivariant 2-form after we identify $\g$ with $\g^\dual$. We can assign the Cartan operator
\[
\Omega_G^2(P) \to \Omega^2([P/G]);\ \eta \to \eta_A
\]
to each connection $A \in A(P)$. In terms of the Cartan operator,
\[
 R(\omega,\mu,u) = \int_\X u^*(\omega-\mu)_A,
\]
and this integration is independent of the choice of the connection $A$ \cite[Proposition 4.2]{cieliebak03:_equiv_euler}. 
\end{remark}

\begin{proof}
We follow the notation in \S \ref{section:orbifold-Riem-surface-revisited}. Let $\chi\in\Cinf(Z_0)$ be a partition of unity. Then
\[
 E(u,A) = \frac{1}{2}\sum_i \int_{U_i} \chi_i \phi_i^* \left( |\dA u|^2 + |F_A|^2 + |\mu(u)|^2 \right)\dvol_\X,
\]
where $\chi_i=\chi|_{U_i}$. Let $s+\i t$ be the standard complex coordinate on $\C$. Then $[d\phi_i(\rd_t)] = j[d\phi_i(\rd_s)]$ and $\phi_i^*A = \Phi_i ds + \Psi_i dt$ for some $\Phi_i, \Psi_i \in \Cinf(U_i,\g)$. Put $u_i = u \circ \phi_i$. Then the pullback of $\dA u$, $F_A$ and $\dvol_\X$ to $U_i$ via $\phi_i$ are given by 
\begin{align*}
 \phi_i^* \dA u &= \left(\frac{\rd u_i}{\rd s} + \Phi_i^M\right)ds + \left(\frac{\rd u_i}{\rd t} + \Psi_i^M\right)dt, \\
 \phi_i^* F_A&= \left(\frac{\rd \Psi_i}{\rd s}-\frac{\rd\Phi_i}{\rd t} + [\Phi_i,\Psi_i]\right)ds \wedge dt, \\
 \phi_i^* \dvol_\X &= \lambda_i^2 ds \wedge dt
\end{align*}
for some positive function $\lambda_i$ on $U_i$. The rest of the proof is similar to Proposition 3.1 in Cieliebak--Gaio--Salamon \cite{cieliebak00:_j_hamil}.
\end{proof}

\begin{cor}
 A pair $(A,u) \in \A(P)\times\Cinf_G(P,M)$ is a solution of the symplectic vortex equation (\ref{eqn:symplectic-vortex-equations}) if and only if $E(A,u)=R(\omega,\mu,u)$.
\end{cor}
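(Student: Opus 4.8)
The plan is to read the statement off directly from the energy identity (\ref{eqn:energy-identity}). First I would rewrite that identity in the equivalent form
\[
 E(A,u) - R(\omega,\mu,u) = \int_\X \left( \bigl|\brdA u\bigr|^2 + \frac{1}{2}\bigl|*F_A + \mu(u)\bigr|^2 \right) \dvol_\X,
\]
so that the claimed equivalence reduces to the assertion that the right-hand integral vanishes if and only if both summands of the integrand vanish identically. The crucial structural observation is that the integrand is a nonnegative basic function on $P$: both $\bigl|\brdA u\bigr|^2$ (computed via (\ref{eqn:norm})) and $\bigl|*F_A + \mu(u)\bigr|^2$ are manifestly $\geq 0$ and $G$-invariant, and the Hodge operator and the norms are arranged so that each term descends to a well-defined nonnegative function on $\X$.

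One direction is immediate: if $(A,u)$ solves (\ref{eqn:symplectic-vortex-equations}), then $\brdA u = 0$ and $*F_A + \mu(u) = 0$, so the integrand vanishes and the integral is zero, giving $E(A,u) = R(\omega,\mu,u)$. For the converse I would suppose $E(A,u) = R(\omega,\mu,u)$, so that the integral above vanishes, and then argue that a nonnegative basic function with zero integral over $\X$ must vanish identically. Concretely, using the presentation $\gpoid{Z}$ of \S\ref{section:orbifold-Riem-surface-revisited} together with a partition of unity $\rho$ (which may be taken nonnegative), the integral equals $\int_{Z_0}\rho\, g$, where $g \geq 0$ is the pullback of the integrand to $Z_0$. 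Since $\rho \geq 0$ and $g \geq 0$, vanishing of the integral forces $\rho\, g \equiv 0$ on $Z_0$; combining this with the normalisation $\tgt_!\src^*\rho = 1$ (so that $\rho$ is strictly positive somewhere on the preimage of every orbit) and the fact that $g$ is constant along orbits, one concludes $g \equiv 0$. Hence $\brdA u = 0$ and $*F_A + \mu(u) = 0$ hold pointwise, i.e. $(A,u)$ solves the symplectic vortex equations.

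The only point requiring care — and the step I expect to be the main (if modest) obstacle — is the last implication, that a nonnegative function with zero integral over the orbifold vanishes everywhere. On a smooth compact manifold this is immediate from continuity, but on the stack $\X$ one must verify that the partition-of-unity formula for $\int_\X$ does not annihilate a genuinely positive contribution. This is precisely what the condition $\tgt_!\src^*\rho = 1$ rules out, since it prevents $\rho$ from vanishing on the entire preimage of any orbit; everything else is a one-line rearrangement of (\ref{eqn:energy-identity}).
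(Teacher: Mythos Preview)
Your proposal is correct and follows exactly the intended route: the paper states this corollary without proof, treating it as immediate from the energy identity (\ref{eqn:energy-identity}) via the obvious nonnegativity argument you spell out. Your extra care with the partition-of-unity step on $\gpoid{Z}$ is sound but more detail than the paper bothers to supply.
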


Since $\X$ is type-R, the group of automorphisms of $(\pi^P:P\to\X,\sigma)$ in $\PG(\X)$ is anti-isomorphic to the group $\G(P)$ defined as (\ref{eqn:gauge-group}). The group $\G(P)$ acts on the space $\A(P) \times \Cinf_G(P,M)$ by
\[
 \A(P)\times\Cinf_G(P,M) \curvearrowleft \G(P);\ 
 \gamma^*(A,u) = (\gamma^{-1}d\gamma+\gamma^{-1}A\gamma,u\gamma).
\]

\begin{lem}
 The energy functional $E$ is $\G(P)$-invariant.
\end{lem}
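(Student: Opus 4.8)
The plan is to show that the integrand of $E$ is unchanged, point by point on $P$, under the substitution $(A,u) \mapsto \gamma^*(A,u) = (\gamma^{-1}d\gamma + \gamma^{-1}A\gamma,\ u\gamma)$. Since the gauge action fixes $\dvol_\X$, and each of the three summands $|\dA u|^2$, $|F_A|^2$, $|\mu(u)|^2$ is a $G$-invariant function on $P$ and hence descends to $\X$, pointwise invariance of the integrand immediately gives $E(\gamma^*(A,u)) = E(A,u)$. Thus the proof reduces to three pointwise identities, one per summand.

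The moment-map and curvature terms are the routine ones. For the former, $(u\gamma)(p) = u(p)\gamma(p)$, so equivariance of the moment map gives $\mu((u\gamma)(p)) = \Ad(\gamma(p)^{-1})\mu(u(p))$; since the chosen inner product $\pair{\ ,\ }$ on $\g$ is $\Ad$-invariant, $|\mu(u\gamma)|^2 = |\mu(u)|^2$ pointwise. For the latter I would use the standard gauge transformation law of the curvature, $F_{\gamma^*A} = \Ad(\gamma^{-1})F_A$; because $\X$ is $2$-dimensional we may write $F_A = (*F_A)\,\dvol_\X$ with $*F_A$ a $\g$-valued function on $P$, so $*F_{\gamma^*A} = \Ad(\gamma^{-1})(*F_A)$, and $\Ad$-invariance of $\pair{\ ,\ }$ again yields $|F_{\gamma^*A}|^2 = |F_A|^2$.

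The covariant-derivative term is the crux. The key is the transformation law
\[
 d_{\gamma^*A}(u\gamma)(v) = d\psi_{\gamma(p)}\bigl(\dA u(v)\bigr)
 \qquad (v \in T_pP),
\]
from which everything follows. To prove it I would write $u\gamma = \psi\circ(u,\gamma)$ and differentiate through the action map: the chain rule splits $d(u\gamma)(v)$ into $d\psi_{\gamma(p)}(du(v))$ and an infinitesimal piece $\eta^M((u\gamma)(p))$ with $\eta = (\gamma^{-1}d\gamma)(v)$. The connection term $(\gamma^*A)(v)^M((u\gamma)(p))$ splits analogously, its $\eta^M$-part cancelling the one above, while the remaining part is handled by the identity $(\Ad(g^{-1})\xi)^M(mg) = d\psi_g(\xi^M(m))$, which is a one-line computation from the definition of $\xi^M$. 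What survives is exactly $d\psi_{\gamma(p)}(\dA u(v))$. Finally, the metric $g(v_1,v_2) = \omega(v_1,Jv_2)$ of (\ref{eqn:metric-coming-from-omega-and-J}) is $G$-invariant, since $\omega$ is $G$-invariant and $J$ commutes with the $G$-action, so each $\psi_{\gamma(p)}$ is an isometry of $M$. Applying this to both $\dA u(v)$ and $(\dA u\circ j)(v)$, and noting that $[v]$ and $\|[v]\|$ in (\ref{eqn:norm}) do not involve $(A,u)$, gives $|d_{\gamma^*A}(u\gamma)| = |\dA u|$ pointwise.

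The main obstacle is the bookkeeping in the covariant-derivative transformation law: keeping left- versus right-translations straight when differentiating $\psi\circ(u,\gamma)$, verifying the cancellation of the two $\eta^M$-terms, and confirming $G$-invariance of $g$. Once that identity is in hand, the three summands are each pointwise invariant and the $\G(P)$-invariance of $E$ is immediate.
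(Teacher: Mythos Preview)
Your proposal is correct and is exactly the ``direct calculation'' the paper alludes to without writing out: you verify pointwise invariance of each summand $|\mu(u)|^2$, $|F_A|^2$, $|\dA u|^2$ via $\Ad$-equivariance of $\mu$, the standard curvature transformation law, and the identity $d_{\gamma^*A}(u\gamma) = d\psi_{\gamma}\circ\dA u$ combined with $G$-invariance of the metric~(\ref{eqn:metric-coming-from-omega-and-J}). The paper gives no further detail, so there is nothing to compare beyond noting that your argument fills in precisely what was omitted.
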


This lemma can be shown by direct calculation and implies the following proposition.

\begin{prop}
 The action of $\G(P)$ on $\A(P) \times \Cinf_G(P,M)$ preserves the space of solutions to the symplectic vortex equations (\ref{eqn:symplectic-vortex-equations}).
\end{prop}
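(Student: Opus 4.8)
The plan is to deduce this from the preceding Lemma together with the Corollary that characterises solutions through the energy identity. By the Corollary, a pair $(A,u)\in\A(P)\times\Cinf_G(P,M)$ solves the symplectic vortex equations if and only if $E(A,u)=R(\omega,\mu,u)$, and by the Lemma one has $E(\gamma^*(A,u))=E(A,u)$ for every $\gamma\in\G(P)$. Writing $u\gamma$ for the $G$-equivariant map underlying $\gamma^*(A,u)$, it therefore suffices to show that $R$ is likewise $\G(P)$-invariant, that is $R(\omega,\mu,u\gamma)=R(\omega,\mu,u)$. Granting this, whenever $(A,u)$ is a solution we obtain
\[
 E(\gamma^*(A,u))=E(A,u)=R(\omega,\mu,u)=R(\omega,\mu,u\gamma),
\]
so $\gamma^*(A,u)$ meets the criterion of the Corollary and is again a solution.

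To prove the gauge-invariance of $R$ I would pass to the bundle-automorphism picture of \S\ref{sec:Autmorphisms}: the element $\gamma\in\G(P)$ corresponds to the $G$-equivariant diffeomorphism $\Phi:P\to P$, $\Phi(p)=p\gamma(p)$, which covers the identity of $\X$, and one checks directly that $\Phi^*A=\gamma^{-1}d\gamma+\gamma^{-1}A\gamma$ and $u\circ\Phi=u\gamma$. Thus $\gamma^*(A,u)$ is nothing but the pullback $(\Phi^*A,\,u\circ\Phi)$. Since $R$ is independent of the chosen connection (Remark following the energy identity), I may compute $R(\omega,\mu,u\gamma)$ using the connection $\Phi^*A$. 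By naturality of the covariant derivative and of the curvature one has $d_{\Phi^*A}(u\circ\Phi)=\Phi^*(\dA u)$ and $F_{\Phi^*A}=\Phi^*F_A$, while $\mu(u\circ\Phi)=\Phi^*(\mu\circ u)$ trivially, whence
\[
 \bigl(d_{\Phi^*A}(u\circ\Phi)\bigr)^*\omega-\pair{\mu(u\circ\Phi),F_{\Phi^*A}}=\Phi^*\!\left((\dA u)^*\omega-\pair{\mu(u),F_A}\right).
\]
Because $\Phi$ covers the identity on $\X$, it preserves the integral over $\X$ of this basic $2$-form, and so $R(\omega,\mu,u\gamma)=R(\omega,\mu,u)$, completing the argument via the displayed chain of equalities.

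The computations are pointwise and of the same nature as those behind the Lemma, so no analytic difficulty arises; the point demanding the most care, and the step I would write out in full, is verifying that the integrand $(\dA u)^*\omega-\pair{\mu(u),F_A}$ is genuinely basic, so that its integral over $\X$ is defined and is unchanged under pulling back by the vertical automorphism $\Phi$. Here the $\Ad$-invariance of the inner product $\pair{\ ,\ }$ together with the equivariance $\mu(mg)=\Ad(g^{-1})\mu(m)$ of the moment map is exactly what guarantees that $\pair{\mu(u),F_A}$ descends to $\X$. Once this is confirmed, descending to $\X$ and integrating is legitimate, and the preservation of the solution space follows immediately from the Corollary.
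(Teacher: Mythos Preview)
Your proposal is correct and follows the paper's line: the paper merely asserts that the Lemma (gauge-invariance of $E$) ``implies the following proposition'' without further argument, and you have supplied exactly the missing step---namely the gauge-invariance of $R(\omega,\mu,u)$---needed to pass from the Lemma to the Proposition via the Corollary. Your pullback argument through the bundle automorphism $\Phi(p)=p\gamma(p)$ is the natural way to see this, and the Remark on the Cartan model (independence of $R$ from the connection) is precisely what lets you compute $R(\omega,\mu,u\gamma)$ with the convenient connection $\Phi^*A$.
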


\subsection{The case of $G=S^1$ and $M=\C$}
\label{section:circle-actions}

In this subsection, we restrict ourselves to the cases when $G=S^1$ and $M=\C$, and we consider the moduli space of solutions to the symplectic vortex equations by comparing it with the moduli space of K\"ahler vortices \cite{mrowka97:_seiber_witten_seifer}. 

First we fix notation for the circle group $G = S^1 = \{ z \in \C \!\ |\!\ |z|=1 \}$. The Lie algebra $\g=\i\R$ is equipped with an $S^1$-invariant inner product: $\pair{\xi_1,\xi_2} = -\xi_1\xi_2$ for $\xi_1,\xi_2\in\i\R$. Fix a positive integer $a$ we define a left $S^1$-action on $\C$ by
\[
 S^1 \curvearrowright \C;\ t \cdot z = t^{-a} z.
\]
(Therefore the right action is defined as $z \cdot t = t^{-1} \cdot z = t^a z$.)
A moment map is given by 
\[
 \mu: \C \mapsto \i\R;\ z \mapsto \dfrac{\i}{2} \bigl(a|z^2|-\tau\bigr), 
\]
where $\tau$ is a positive real number. Then the Marsden--Weinstein quotient $\MW{\C}S^1$ is called a weighted projective space $\mathbb{P}(a)$ (cf. Sakai \cite[Section 3.3]{sakai12:_delig_mumfor}).

Let $\X$ be a compact orbifold Riemann surface $(\Sigma,j,\mathbf{z},\mathbf{m})$. Choose an object $(\pi:P\to\X,\sigma)$ of $\PG(\X)$. Then $P$ is a 3-dimensional closed manifold equipped with a locally free $S^1$-action, i.e.\! a Seifert fibred space \cite{mrowka97:_seiber_witten_seifer,scott83}. 

The symplectic vortex equations (\ref{eqn:symplectic-vortex-equations}) are given by
\begin{equation}
 \label{eqn:easiest-sve} 
 \brdA u = 0, \quad *F_A + \dfrac{\i}{2} \bigl(a|u|^2-\tau\bigr) = 0,
\end{equation}
where $A \in \A(P) \subset \Omega^1(P,\i\R)$ is a connection on $P$, and $u$ belongs to 
\[
 \Cinf_{S^1}(P,\C) = \{ u \in \Cinf(P,\C) \mid u(pt)=t^a u(p) \text{ for all } p \in P \text{ and } t \in S^1 \}.
\]
The gauge group $\G(P)=\{\gamma \in \Cinf(P,S^1) \mid \gamma(pt)=\gamma(p) \text{ for all } p \in P \text{ and } t \in S^1 \}$ acts on the space of solutions to the equations (\ref{eqn:easiest-sve}) by $\gamma^*(A,u) = (\gamma^{-1}d\gamma+\gamma^{-1}A\gamma,\gamma^a u)$. We denote by $\M(P)$ the moduli space of the solutions to (\ref{eqn:easiest-sve}):
\[
 \M(P) = \{ (A,u) \in \A(P)\times\Cinf_{S^1}(P,\C) \mid (A,u) \text{ satisfies (\ref{eqn:easiest-sve})} \}\big/\G(P).
\]

To consider the moduli space of solutions of the symplectic vortex equations (\ref{eqn:easiest-sve}), we assume the following condition for $\X$.
\begin{assumption}
 \label{assumption:common-multiple}
 Let $\mathbf{m}=(m_1,\dots,m_k)$. Then $a$ is a common multiple of $m_1,\dots,m_k$.
\end{assumption}
The reason for the assumption is as follows. As Remark \ref{remark:adiabatic-limit}, a certain limit of the symplectic vortex equations is the equation for pseudo-holomorphic curve in the Marsden-Weinstein quotient $\MW{\C}S^1$:
\[
 \brdA u = 0, \quad |u|^2=\dfrac{\tau}{a}.
\]
It is natural to assume that the above equations have a solution. Let $(A,u)$ be a solution. For $p \in P$ and a stabiliser $t \in S^1$ at $p$, the map $u:P\to\C$ satisfies $u(p)=t^a u(p)$. By the second equation we have $u(p) \ne 0$, therefore $t^a=1$. This implies the assumption.

\begin{remark}
 If $a=1$, then the orbifold Riemann surface $\X$ must be non-singular. 
\end{remark}

To compare $\M(P)$ with the moduli space of K\"ahler vortices, we describe the moduli space $\M(P)$ by using slices of the $S^1$-space $P$ (cf.\! \S \ref{section:orbifold-Riem-surface-revisited}).

Taking pullbacks via the atlas $\epsilon$ of $P$, we can identify $\Cinf(P,\C)$ with the space 
\[
 \Cinf(Z_1 \times S^1\!\rightrightarrows\!Z_0 \times S^1,\C)
 = \{ \tilde{u} \in \Cinf(Z_0 \times S^1,\C) \mid \src^*\tilde{u}=\tgt^*\tilde{u} \}.
\]
 By this fact, there is a one-to-one correspondence between $\Cinf_{S^1}(P,\C)$ and the space 
\[
 \Cinf(Z_0,\C)_\rho = \{ f \in \Cinf(Z_0,\C) \mid \src^*f = \rho^a \tgt^*f \}. 
\]
The correspondence is explicitly given by $f = u \circ \iota$.

Moreover the pullback $\epsilon^*:\Omega^1(P,\i\R)\to\Omega^1(Z_0 \times S^1,\i\R)$ gives a linear isomorphism from the space $\A(P)$ of connections to the space $\A(Z_1 \times S^1 \!\rightrightarrows\! Z_0 \times S^1)$ defined by 
\begin{align*}
 \{ B \in \Omega^1(Z_0 \times S^1,\i\R) \mid \src^*B=\tgt^*B, \ 
 B(\xi^{Z_0 \times S^1}) = \xi\ (\forall \xi \in \i\R) \ \text{ and }\
 \psi_t^* B = B\ (\forall t \in S^1) \}.
\end{align*}
Let $B_\mathrm{MC}$ be the Maurer--Cartan form for the product bundle $Z_0 \times S^1 \to Z_0$. Define
\[
 \Omega^1(Z_0,\i\R)_\rho = \{
 \theta \in \Omega^1(Z_0,\i\R) \mid 
 \src^*\theta = \rho^{-1}d\rho + \tgt^*\theta
 \}. 
\]
Then the pullback $\pr_1^*:\Omega^1(Z_0,\i\R)\to\Omega^1(Z_0 \times S^1,\i\R)$ gives rise to the linear isomorphism
\[
 \Omega^1(Z_0,\i\R)_\rho\to\A(Z_1 \times S^1 \!\rightrightarrows\! Z_0 \times S^1);\
 \theta \mapsto B_\mathrm{MC} + \pr_1^*\theta.
\]
Therefore we obtain a one-to-one correspondence between $\A(P)$ and $\Omega^1(Z_0,\i\R)_\rho$. The explicit correspondence is given by $\epsilon^*A=B_\mathrm{MC}+\pr_1^*\theta$ (or $\iota^*A=\theta$) for $A \in \A(P)$ and $\theta \in \Omega^1(Z_0,\i\R)_\rho$.

The gauge group $\G(P)$ can be identifies with 
\[
 \Cinf(\gpoid{Z},S^1)= \{ g \in \Cinf(Z_0,S^1) \mid \src^*g = \tgt^*g  \}
\]
via the identification $\gamma \circ \iota = g$ for $\gamma \in \G(P)$ and $g \in \Cinf(\gpoid{Z},S^1)$ and the action of the gauge group $\G(P)$ on $\A(P) \times \Cinf_{S^1}(P,\C)$ can be identified with the following action:
\[
 \Omega^1(Z_0,\i\R)_\rho \times \Cinf(Z_0,\C)_\rho \curvearrowleft \Cinf(\gpoid{Z},S^1);\
 (\theta,f) \cdot g = (g^{-1}dg+\theta,g^af).
\]

Under the above identification, the equations (\ref{eqn:easiest-sve}) can be identified the following PDEs by taking pullbacks via $\iota:Z_0 \to P$:
\begin{equation}
 \label{eqn:sve-on-Z0} 
 \overline{\partial}_{-a\theta} f = 0, \quad *F_\theta + \dfrac{\i}{2} \bigl(a|f|^2-\tau\bigr) = 0
\end{equation}
for $(\theta,f) \in \Omega^1(Z_0,\i\R)_\rho \times \Cinf(Z_0,\C)_\rho$.
Here, for the $*$-operator on $Z_0$, the volume form on $Z_0$ is the pullback $\iota^*\dvol_\X$. Therefore $\M(P)$ is identified with 
\[
 \M(\gpoid{Z}) = \{ (\theta,f) \in \Omega^1(Z_0,\i\R)_\rho \times \Cinf(Z_0,\C)_\rho
 \mid (\theta,f) \text{ satisfies (\ref{eqn:sve-on-Z0})} \}\big/\Cinf(\gpoid{Z},S^1).
\]

Next we describe the moduli space of K\"ahler vortices in terms of $\gpoid{Z}$. The $S^1$-action on $P \times \C$ by $(p,x) \cdot t = (pt,t^ax)$ is proper and locally free, and therefore the action induces an orbifold structure on the quotient space $P \times_{S^1} \C$. Moreover the quotient map $P\times_{S^1}\C\to P/S^1 = \Sigma$ gives rise to a Hermitian orbifold line bundle. The Hermitian orbifold line bundle can be described as the following proper \'etale groupoid
\[
 Z_1\times\C \rightrightarrows Z_0\times\C;\ \
 \begin{cases}
  \src(\alpha,x) = (\src\alpha,x), \\
  \tgt(\alpha,x) = (\tgt\alpha,\rho(\alpha)^ax).
 \end{cases}
\]

The space of orbisections is naturally identified with $\Cinf(Z_0,\C)_\rho$, and the space of the compatible orbifold connections is given by
\[
\Omega^1(Z_0,\i\R)_a =
 \{ B \in \Omega^1(Z_0,\i\R) \mid \src^*B = -a \rho^{-1}d\rho + \tgt^*B \}.
\]
The gauge group is nothing but $\Cinf(\gpoid{Z},S^1)$ and the (right) action of the gauge group on $\Omega^1(Z_0,\i\R)_a \times \Cinf(Z_0,\C)_\rho$ is defined by
\[
 \Omega^1(Z_0,\i\R)_a \times \Cinf(Z_0,\C)_\rho \curvearrowleft \Cinf(\gpoid{Z},S^1);\ (B,f) \cdot g = (g^{-1}dg+B,g^{-1}f).
\]
The equations of K\"aher vortices over the orbifold Riemann surface $(\Sigma,j,\mathbf{z},\mathbf{m})$ are the PDEs
\begin{equation}
 \label{eqn:Kahler-vortices} 
 \overline{\partial}_{B} f = 0, \quad 
 *F_{B} + \dfrac{\i}{2} \bigl(|f|^2-a\tau\bigr) = 0
\end{equation}
for $(B,f) \in \Omega^1(Z_0,\i\R)_a \times \Cinf(Z_0,\C)_\rho$.

\begin{remark}
 The above equations are obtained after we replace $F_{K_\Sigma}$ with $-a\i\tau\dvol_\X$ in the equations of K\"ahler vortices \cite[Section 5.5]{mrowka97:_seiber_witten_seifer}. This replacement does not produce any problems on the following discussion (Proposition \ref{prop:Theorem-5-of-MOY}) in this paper.
\end{remark}

Therefore the moduli space of K\"ahler vortices is defined by 
\[
 \M^\mathrm{K} = \{ (B,f) \in \Omega^1(Z_0,\i\R)_a \times \Cinf(Z_0,\C)_\rho
 \mid (B,f) \text{ satisfies (\ref{eqn:Kahler-vortices})} \}\big/\Cinf(\gpoid{Z},S^1).
\]

Mrowka, Ozsv\'ath and Yu show that the moduli space of K\"aher vortices has a structure of complex manifold and the complex dimension is equal to the integer called the background degree. To calculate it, we recall briefly Seifert invariants. (The details of Seifert invariants can be found in Mrowka--Ozsv\'ath--Yu \cite[Section 2]{mrowka97:_seiber_witten_seifer} and Furuta--Steer \cite{furuta92:_seifer_yang_mills_rieman}.) 

Orbifold line bundles over an orbifold Riemann surface can be classified by pairs of integers called Seifert invariants:
\begin{equation}
 \label{eqn:Seifert-invariant}
 \mathbf{b} = (b,\beta_1,\dots,\beta_k).
\end{equation}
Here $k$ is the number of singular points of the base orbifold, $\beta_i$ is an integer with $0 \leq \beta_i < m_i$, and $b$ is the integer which is called the background degree of the orbifold line bundle. The integer $\beta_i$ is defined by local structure of the orbifold line bundle around the $i$-th singular point. The background degree $b$ is the degree of the de-singularisation of the orbifold line bundle. For an orbifold line bundle $E \to \Sigma$ with the Seifert invariant (\ref{eqn:Seifert-invariant}), the degree of the line bundle is calculated by the formula
\begin{equation}
 \label{eqn:degree-by-Seifert-invariant}
  \deg(E) = b + \dfrac{\beta_1}{m_1} + \cdots + \dfrac{\beta_k}{m_k}.
\end{equation}

Define $d$ as the real number $\displaystyle\frac{\i}{2\pi}\int_{[P/S^1]} F_A$ which is independent of the choice of $A \in \A(P)$. 
\begin{lem}
 The Seifert invariant of the orbifold line bundle $P\times_{S^1}\C\to\Sigma$ is
$\mathbf{b}= \left(ad,0,\dots,0\right)$.
\end{lem}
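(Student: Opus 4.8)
The plan is to determine the two kinds of data making up the Seifert invariant \eqref{eqn:Seifert-invariant} separately: the local invariants $\beta_1,\dots,\beta_k$, which are read off from the $\Z/m_i\Z$-representation on the fibre of $P\times_{S^1}\C$ over the singular point $z_i$, and the background degree $b$, which I would extract from the degree formula \eqref{eqn:degree-by-Seifert-invariant} once the $\beta_i$ are known. The crucial input is Assumption \ref{assumption:common-multiple}, which forces every local invariant to vanish.

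First I would compute the $\beta_i$ using a slice of the $S^1$-space $P$ at a point over $z_i$ (cf.\ \S\ref{section:orbifold-Riem-surface-revisited}). Near such a point $P$ is modelled by $(U\times S^1)/(\Z/m_i\Z)$, where $U\subset\C$ is a disc with coordinate $z$, the group $S^1$ acts on the right by $(z,t)\cdot s=(z,ts)$, and the generator of $\Z/m_i\Z$ acts by $(z,t)\mapsto(\zeta z,\zeta^{e_i}t)$ with $\zeta=e^{2\pi\i/m_i}$ and $\gcd(e_i,m_i)=1$; coprimality is exactly the condition that the $S^1$-stabiliser over $z_i$ is $\Z/m_i\Z$, so that $[P/S^1]=\X$. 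A section of $P\times_{S^1}\C$ is an element $u\in\Cinf_{S^1}(P,\C)$, so writing $u(z,t)=t^af(z)$ and imposing invariance under $\Z/m_i\Z$ yields $f(\zeta z)=\zeta^{-ae_i}f(z)$ (this is the local manifestation of the transition $\rho^a$ in the groupoid description of $P\times_{S^1}\C$). Thus the fibre of $P\times_{S^1}\C$ over $z_i$ carries the character $\zeta\mapsto\zeta^{-ae_i}$ of $\Z/m_i\Z$, and the local invariant $\beta_i$ is the representative of $-ae_i$ modulo $m_i$ in $\{0,\dots,m_i-1\}$. Since $a$ is a common multiple of $m_1,\dots,m_k$ by Assumption \ref{assumption:common-multiple}, we have $m_i\mid ae_i$, so this character is trivial and $\beta_i=0$ for every $i$.

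With all $\beta_i=0$, the degree formula \eqref{eqn:degree-by-Seifert-invariant} reduces to $\deg(P\times_{S^1}\C)=b$, so it remains to compute the orbifold degree of $P\times_{S^1}\C$ by Chern--Weil theory. A connection $A\in\A(P)$ induces a connection on the line bundle associated to the weight-$a$ representation $t\mapsto t^a$ of $S^1$, whose curvature is $aF_A$ because the derivative of this representation is multiplication by $a$. Hence
\[
\deg(P\times_{S^1}\C)=\frac{\i}{2\pi}\int_{[P/S^1]}aF_A=a\cdot\frac{\i}{2\pi}\int_{[P/S^1]}F_A=ad,
\]
which is independent of $A$, and therefore $b=ad$. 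This gives $\mathbf b=(ad,0,\dots,0)$; note that the vanishing of the $\beta_i$ simultaneously guarantees that $ad=\deg(P\times_{S^1}\C)=b$ is an integer, as it must be, consistently with $m_i\mid a$.

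The main obstacle is bookkeeping of conventions rather than genuine difficulty. One must pin down the precise dictionary, in the sense of Mrowka--Ozsv\'ath--Yu \cite{mrowka97:_seiber_witten_seifer} and Furuta--Steer \cite{furuta92:_seifer_yang_mills_rieman}, between the weight of the $\Z/m_i\Z$-action on the fibre and the Seifert invariant $\beta_i$ (including the correct sign and range), and to check that the orbifold integral $\frac{\i}{2\pi}\int_{[P/S^1]}F_A$ defining $d$ agrees with the orbifold degree entering \eqref{eqn:degree-by-Seifert-invariant}. Once these conventions are fixed the computation is immediate, with Assumption \ref{assumption:common-multiple} doing all the real work by trivialising the local representations.
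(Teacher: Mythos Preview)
Your proposal is correct and follows essentially the same approach as the paper: show that Assumption \ref{assumption:common-multiple} trivialises the local $\Z/m_i\Z$-representation on the fibre so that all $\beta_i=0$, then invoke \eqref{eqn:degree-by-Seifert-invariant} to identify the background degree with the orbifold degree, and compute the latter as $ad$ via Chern--Weil. The paper's own argument is a terser version of yours: it observes directly from the slice that $(z,x)\cdot t=(zt,t^ax)=(zt,x)$ for $t\in\Gamma_\alpha$, bypassing your section calculation and the auxiliary parameter $e_i$, and then simply asserts the degree computation you spell out.
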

\begin{proof}
 Note that the triple $(U_\alpha\times\C,\phi_\alpha\times\id_\C,\Gamma_\alpha)$ is a slice for the $S^1$-action on $P\times\C$. Here the right action of $\Gamma_\alpha$ on $U_\alpha\times\C$ is given by
\[
 (z,x) \cdot t = (zt,t^ax) = (zt,x).
\]
The last identity follows Assumption \ref{assumption:common-multiple}, and therefore $\beta_1=\dots=\beta_k=0$ in the description of Seifert invariant (\ref{eqn:Seifert-invariant}). 

Because of the identity (\ref{eqn:degree-by-Seifert-invariant}), the background degree is equal to the degree of the orbifold line bundle $P\times_{S^1}\C\to\Sigma$. Direct calculation shows that the degree is equal to $ad$.
\end{proof}

\begin{remark}
 The above lemma implies that $ad$ is an integer under Assumption \ref{assumption:common-multiple}.
\end{remark}

Applying Theorem 5 of Mrowka--Ozsv{\'a}th--Yu \cite{mrowka97:_seiber_witten_seifer}, we obtain the following statement.

\begin{prop}
 \label{prop:Theorem-5-of-MOY}
 The moduli space $\M^\mathrm{K}$ of K\"ahler vortices is empty if 
 \[
  d > \frac{\tau\mathrm{Vol}(\Sigma)}{4\pi},
 \]
 and it is naturally diffeomorphic to the $ad$-fold symmetric product of $\Sigma$, $\mathrm{Sym}^{ad}(\Sigma)$ if 
 \begin{equation}
  \label{eqn:nonempty-moduli}
  d < \frac{\tau\mathrm{Vol}(\Sigma)}{4\pi}.
 \end{equation}
\end{prop}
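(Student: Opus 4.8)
The plan is to recognise the equations (\ref{eqn:Kahler-vortices}) as the abelian vortex equations for the orbifold line bundle $P\times_{S^1}\C\to\Sigma$ and then to quote Theorem 5 of Mrowka--Ozsv\'ath--Yu \cite{mrowka97:_seiber_witten_seifer} directly, so the genuine content is setting up the dictionary and pinning down the normalisation of the threshold. First I would record the translation already prepared above: a configuration is a pair $(B,f)\in\Omega^1(Z_0,\i\R)_a\times\Cinf(Z_0,\C)_\rho$ consisting of a compatible orbifold connection and an orbisection, and the symmetry group acting on it is $\Cinf(\gpoid{Z},S^1)$. By the preceding lemma the Seifert invariant of $P\times_{S^1}\C$ is $(ad,0,\dots,0)$, so by (\ref{eqn:degree-by-Seifert-invariant}) the background degree entering the Mrowka--Ozsv\'ath--Yu count equals $ad$ and every $\beta_i$ vanishes. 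This is precisely the shape of the data to which their theorem applies, so the bulk of the argument is a verification that each ingredient of their hypotheses is met.

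Next I would explain the two conclusions. The first equation $\overline{\partial}_{B}f=0$ says that $f$ is a holomorphic orbisection for the holomorphic structure determined by $B$, while the second is the curvature (moment-map) equation. Following the Bradlow--Garc\'ia-Prada description of the vortex moduli space in its Seifert--orbifold form, the assignment sending a gauge class $[(B,f)]$ to the divisor of zeros of $f$ identifies the moduli space with the space of effective orbifold divisors whose degree equals the background degree; since every $\beta_i=0$, these are ordinary effective divisors of degree $ad$ on $\Sigma$, and their parameter space is $\mathrm{Sym}^{ad}(\Sigma)$. This produces the asserted diffeomorphism in the nonempty range, and Theorem 5 simultaneously supplies the (complex, hence smooth) manifold structure, so no further analytic work is needed once the cited theorem is in force.

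The one point that must genuinely be checked is that the threshold appearing in Theorem 5, written in the present normalisation, is exactly $\tau\mathrm{Vol}(\Sigma)/4\pi$. I would obtain it by integrating the second equation of (\ref{eqn:Kahler-vortices}) over $\X$ against the partition of unity for $\gpoid{Z}$: since $(*F_{B})\,\dvol_\X=F_{B}$, the curvature term contributes $\int_\X F_{B}$, which by the definition $d=\tfrac{\i}{2\pi}\int_{[P/S^1]}F_A$ and the degree computation of the preceding lemma is a fixed multiple of $ad$, while the remaining term is $\tfrac{\i}{2}\bigl(\int_\X|f|^2\,\dvol_\X-a\tau\mathrm{Vol}(\Sigma)\bigr)$. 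Rearranging yields an identity of the form
\[
 \int_\X|f|^2\,\dvol_\X = a\bigl(\tau\mathrm{Vol}(\Sigma)-4\pi d\bigr),
\]
whose left-hand side is nonnegative and vanishes only for the zero section. Hence no solution can exist once $d>\tau\mathrm{Vol}(\Sigma)/4\pi$, giving the emptiness claim, whereas in the range (\ref{eqn:nonempty-moduli}) one has $f\not\equiv0$ and a genuine degree-$ad$ divisor. I expect the main obstacle to be bookkeeping rather than analysis: I must track the sign and the factor of $a$ carefully through the curvature--degree convention so that the displayed identity carries the correct sign, and I must confirm that the replacement of $F_{K_\Sigma}$ by $-a\i\tau\,\dvol_\X$ noted above only shifts the parameter $\tau$ in Theorem 5, leaving both the background degree $ad$ and the form of the threshold untouched, so that the cited theorem applies without modification.
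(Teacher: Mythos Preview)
Your proposal is correct and matches the paper's approach: the paper does not give a proof at all but simply prefaces the proposition with the sentence ``Applying Theorem 5 of Mrowka--Ozsv\'ath--Yu \cite{mrowka97:_seiber_witten_seifer}, we obtain the following statement,'' relying on the preceding lemma for the Seifert invariant $(ad,0,\dots,0)$ and on the remark that the replacement of $F_{K_\Sigma}$ by $-a\i\tau\,\dvol_\X$ is harmless. Your write-up supplies the Bradlow-type integration argument and the divisor description that the paper leaves implicit in the citation; this is extra detail rather than a different route.
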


Let $\tilde{\M}(\gpoid{Z})$ and $\tilde{\M}^\mathrm{K}$ be the spaces of solutions to (\ref{eqn:sve-on-Z0}) and (\ref{eqn:Kahler-vortices}), respectively. By definition $\M(\gpoid{Z})=\tilde\M(\gpoid{Z})\big/\Cinf(\gpoid{Z},S^1)$ and $\M^\mathrm{K}=\tilde\M^\mathrm{K}\big/\Cinf(\gpoid{Z},S^1)$ hold. Now we compare $\M(\gpoid{Z})$ with $\M^\mathrm{K}$. The following linear isomorphism
\[
\Psi: \Omega^1(Z_0,\i\R)_\rho \times \Cinf(Z_0,\C)_\rho \to \Omega^1(Z_0,\i\R)_a \times \Cinf(Z_0,\C)_\rho;\ (\theta,f) \mapsto (-a\theta,af)
\]
gives a one-to-one correspondence between $\tilde\M(\gpoid{Z})$ and $\tilde\M^\mathrm{K}$. Moreover the map $\Psi$ satisfies
\begin{equation}
 \Psi\bigl((\theta,f) \cdot g \bigr) = \Psi(\theta,f) \cdot g^{-a}
\end{equation}
for $\theta \in \Omega^1(Z_0,\i\R)_\rho$, $f \in \Cinf(Z_0,\C)_\rho$ and $g \in \Cinf(\gpoid{Z},S^1)$. Therefore $\Psi$ induces a well-defined surjective map $\overline\Psi: \M(\gpoid{Z}) \to \M^\mathrm{K}$.

In general, the map $\overline\Psi$ is not injective. Define a covering map $\sigma:S^1 \to S^1$ by $\sigma(t)=t^a$ and a map $\tilde\sigma:\Cinf(\gpoid{Z},S^1)\to\Cinf(\gpoid{Z},S^1)$ by $\tilde\sigma(g)=\sigma \circ g$. Then $\Psi$ induces a bijection between $\M(\gpoid{Z})$ and $\tilde\M^\mathrm{K}\big/\image\tilde\sigma$, and each fibre of the natural projection $\tilde\M^\mathrm{K}\big/\image\tilde\sigma \to \tilde\M^\mathrm{K}\big/\Cinf(\gpoid{Z},S^1) = \M^\mathrm{K}$ can be identified with $\coker\tilde\sigma$.
\[
 \xymatrix@C=60pt@R=20pt{
 \M(\gpoid{Z}) \ar@{<->}[r]^{1:1} \ar[dr]_{\overline\Psi}& 
 \tilde\M^\mathrm{K}\big/\image\tilde\sigma
 \ar[d]
 \\ & 
 \M^\mathrm{K}.
 }
\]

\begin{remark}
 If the inequality (\ref{eqn:nonempty-moduli}) holds, then the action of $\Cinf(\gpoid{Z},S^1)$ on $\tilde\M^\mathrm{K}$ is free.
\end{remark}

In general, the cokernel of $\tilde\sigma$ is complicated, but it is trivial if the underlying space $\Sigma$ of $\X$ is simply-connected.

\begin{lem}
 If the underlying space $\Sigma$ is simply-connected (i.e.\! of genus zero), then for any $g \in \Cinf(\gpoid{Z},S^1)$ there is $g_0 \in \Cinf(\gpoid{Z},S^1)$ such that $g = g_0^{a}$.
\end{lem}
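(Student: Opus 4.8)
The plan is to produce a logarithm of $g$ that is compatible with the groupoid and then take its $a$-th fractional power. Concretely, if I can write $g=\exp(2\pi\i\,h)$ for some $h\in\Cinf(\gpoid{Z},\R)=\{h\in\Cinf(Z_0,\R)\mid \src^*h=\tgt^*h\}$, then $g_0=\exp(2\pi\i\,h/a)$ is a smooth $S^1$-valued function on $Z_0$ which again satisfies $\src^*g_0=\tgt^*g_0$, so $g_0\in\Cinf(\gpoid{Z},S^1)$, and manifestly $g_0^a=g$. Thus the whole statement reduces to the existence of such a logarithm $h$, i.e.\ to lifting $g$ through the exponential map across the groupoid. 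Note that this lift is automatically a groupoid function, since it is a global section of the relevant sheaf.

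The existence of $h$ is governed by a single cohomological obstruction. On $\gpoid{Z}$ I would use the exponential short exact sequence of sheaves
\[
 0 \to \ul{\Z} \to \Cinf_{\R} \xrightarrow{\ \exp(2\pi\i\,\cdot\,)\ } \Cinf_{S^1} \to 0,
\]
where $\Cinf_{\R}$ and $\Cinf_{S^1}$ are the sheaves of smooth $\R$- and $S^1$-valued functions on $\gpoid{Z}$, whose groups of global sections are exactly $\Cinf(\gpoid{Z},\R)$ and $\Cinf(\gpoid{Z},S^1)$. Because $\gpoid{Z}$ carries a partition of unity (cf.\ \S\ref{section:orbifold-Riem-surface-revisited} and \cite{behrend04:_cohom}), the structure sheaf $\Cinf_{\R}$ is fine and hence acyclic, so $\H^1(\gpoid{Z},\Cinf_{\R})=0$. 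The long exact sequence then identifies the connecting homomorphism
\[
 \delta:\Cinf(\gpoid{Z},S^1) \to \H^1(\gpoid{Z};\Z)
\]
as the sole obstruction: the image of $\exp$, namely the functions admitting a global logarithm, is exactly $\ker\delta$. Hence it suffices to prove $\H^1(\gpoid{Z};\Z)=0$ when $\Sigma$ has genus zero.

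To evaluate $\H^1(\gpoid{Z};\Z)$ I would identify it with the degree-one integral cohomology of the classifying space of $\gpoid{Z}$. For a genus-zero orbifold this vanishes. The cleanest argument: the orbifold fundamental group of $\mathbf\Sigma$ is generated by the loops $x_1,\dots,x_k$ encircling the marked points $z_1,\dots,z_k$ (there being no handle generators, as the genus is zero), subject to $x_i^{m_i}=1$ and $x_1\cdots x_k=1$; each generator is torsion in the abelianisation, so any homomorphism into the torsion-free group $\Z$ is trivial and $\H^1(\gpoid{Z};\Z)=\Hom(\pi_1^{\mathrm{orb}}(\mathbf\Sigma),\Z)=0$. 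Equivalently, in the Leray spectral sequence for the projection $B\gpoid{Z}\to\Sigma$ the two terms contributing to total degree one are $\H^1(\Sigma;\Z)=0$ (since $\Sigma\cong S^2$) and $\H^0$ of the sheaf on $\Sigma$ with stalks $\Hom(G_p,\Z)$, which vanishes because each local group $G_p$ is finite. With $\delta(g)=0$ forced, the logarithm $h$ exists and the construction of $g_0$ above completes the proof.

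The main obstacle I anticipate is the vanishing $\H^1(\gpoid{Z};\Z)=0$, and in particular making sure it is computed in the groupoid (stack) site rather than on the coarse space $\Sigma$. Since $\Sigma\cong S^2$ the coarse $\H^1$ is already zero, but the genuinely orbifold contribution must also be ruled out, which is precisely where the finiteness of the stabilisers $G_p$ (and the absence of handle generators for genus zero) enters. The remaining ingredients—fineness of $\Cinf_{\R}$ from the partition of unity, and the automatic groupoid-compatibility of the lift—are formal once one works throughout with the sheaves $\Cinf_{\R}$ and $\Cinf_{S^1}$ on $\gpoid{Z}$.
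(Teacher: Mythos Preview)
Your argument is correct, but the paper's proof is considerably more elementary and sidesteps the cohomological machinery entirely. The key observation you are missing is that the condition $\src^*g=\tgt^*g$ means $g$ is constant on orbits, so it descends to a \emph{continuous} map $h:\Sigma\to S^1$ with $g=h\circ\pi$ (where $\pi:Z_0\to\Sigma$ is the quotient). Since $\Sigma$ is simply-connected, ordinary covering-space theory lifts $h$ along the finite cover $\sigma:S^1\to S^1$, $t\mapsto t^a$, to $\tilde h:\Sigma\to S^1$; then $g_0=\tilde h\circ\pi$ is smooth (it is locally a lift of the smooth map $g$ along the local diffeomorphism $\sigma$) and is automatically groupoid-invariant because it factors through $\Sigma$.

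The comparison is instructive. You lift along the universal cover $\R\to S^1$ and must therefore kill an obstruction in $\H^1(\gpoid{Z};\Z)$, which forces you to confront the orbifold contribution and argue via $\pi_1^{\mathrm{orb}}$ or a Leray spectral sequence. The paper instead lifts along the $a$-fold cover $S^1\to S^1$ \emph{after} descending to the coarse space, so the only relevant group is $\pi_1(\Sigma)=0$; the orbifold stabilisers never enter. Your approach buys generality (it would adapt to situations where the gauge group does not obviously factor through the coarse space), while the paper's approach buys brevity and avoids any appeal to sheaf cohomology on stacks. In particular, your stated ``main obstacle''---distinguishing stack cohomology from coarse cohomology---is precisely what the paper's descent step renders unnecessary.
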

\begin{proof}
 Let $\pi:Z_0 \to Z_0/Z_1 = \Sigma$ be the quotient map. Any $g \in \Cinf(\gpoid{Z},S^1)$ induces a continuous map $h:\Sigma \to S^1$ such that $g = h \circ \pi$. Since $\Sigma$ is simply-connected, $h$ has a lift: a continuous map $\tilde{h}:\Sigma \to S^1$ satisfying $\sigma \circ \tilde{h}=h$.

 Since $\sigma\circ(\tilde{h}\circ\pi)=h\circ\pi=g$, the continuous map $\tilde{h}\circ\pi:Z_0 \to S^1$ is a lift of $g:Z_0 \to S^1$, hence the composition $\tilde{h}\circ\pi$ is smooth. Since $\tilde{h}\circ\pi\circ\src=\tilde{h}\circ\pi\circ\tgt$, $\tilde{h}\circ\pi \in \Cinf(\gpoid{Z},S^1)$. The map $g_0 = \tilde{h}\circ\pi$ is what we want. 
\end{proof}

Identifying $\M(P)$ with $\M(\gpoid{Z})$, we obtain the following statement.

\begin{thm}
 If $\Sigma$ is simply-connected, then the moduli space $\M(P)$ is empty if 
 \[
  d > \frac{\tau\mathrm{Vol}(\Sigma)}{4\pi},
 \]
 and it has a smooth structure which is diffeomorphic to the complex projective space ${\C}P^{ad}$ if
 \[
  d < \frac{\tau\mathrm{Vol}(\Sigma)}{4\pi}.
 \]
\end{thm}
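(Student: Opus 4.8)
The plan is to combine the three results already established above: the identification $\M(P)\cong\M(\gpoid{Z})$, the surjection $\overline\Psi:\M(\gpoid{Z})\to\M^\mathrm{K}$ fitting into the commutative diagram above, and Proposition \ref{prop:Theorem-5-of-MOY}, which describes $\M^\mathrm{K}$ in terms of the threshold $\tau\mathrm{Vol}(\Sigma)/4\pi$. Throughout I identify $\M(P)$ with $\M(\gpoid{Z})$, so it suffices to describe the latter.

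First I would dispose of the emptiness case. If $d>\tau\mathrm{Vol}(\Sigma)/4\pi$, then Proposition \ref{prop:Theorem-5-of-MOY} gives $\M^\mathrm{K}=\emptyset$; since $\overline\Psi:\M(\gpoid{Z})\to\M^\mathrm{K}$ is surjective, its domain $\M(\gpoid{Z})$ must be empty as well, and hence so is $\M(P)$. This settles the first assertion with no further work.

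Next comes the nonempty case. Under the strict inequality (\ref{eqn:nonempty-moduli}) the Lemma preceding this theorem applies: since $\Sigma$ is simply connected, every $g\in\Cinf(\gpoid{Z},S^1)$ is an $a$-th power, i.e. the map $\tilde\sigma$ is surjective, so $\coker\tilde\sigma$ is trivial. Consequently, in the diagram above the projection $\tilde\M^\mathrm{K}\big/\image\tilde\sigma\to\M^\mathrm{K}$ has singleton fibres and is therefore a bijection, whence $\overline\Psi:\M(\gpoid{Z})\to\M^\mathrm{K}$ is a bijection. I would then upgrade this to a diffeomorphism: the Remark following Proposition \ref{prop:Theorem-5-of-MOY} guarantees that under (\ref{eqn:nonempty-moduli}) the gauge action on $\tilde\M^\mathrm{K}$ is free, so both $\M(\gpoid{Z})$ and $\M^\mathrm{K}$ carry smooth manifold structures, and since $\overline\Psi$ is induced by the linear isomorphism $\Psi$ (which intertwines the two gauge actions via $g\mapsto g^{-a}$), it descends to a smooth map with smooth inverse. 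Combined with Proposition \ref{prop:Theorem-5-of-MOY} this gives $\M(P)\cong\M^\mathrm{K}\cong\mathrm{Sym}^{ad}(\Sigma)$. It then remains to identify the symmetric product: a closed simply connected Riemann surface is biholomorphic to $\C P^1$, and there is a classical diffeomorphism $\mathrm{Sym}^{n}(\C P^1)\cong\C P^{n}$, since an effective degree-$n$ divisor on $\C P^1$ is precisely a homogeneous binary form of degree $n$ up to scale, so $\mathrm{Sym}^{n}(\C P^1)=\mathbb{P}(\mathrm{Sym}^{n}(\C^2))\cong\C P^{n}$. Taking $n=ad$ yields $\M(P)\cong\C P^{ad}$.

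The main obstacle is the upgrade from a set-theoretic bijection to a diffeomorphism: both moduli spaces are genuine manifolds only when the gauge action is free, which holds precisely under the strict inequality (\ref{eqn:nonempty-moduli}), so one must track carefully where the threshold is used and verify that $\Psi$ really descends to a smooth map with smooth inverse on the quotients rather than merely a continuous bijection. Everything else—the emptiness, the underlying bijection, and the symmetric-product computation—follows formally from the results already in hand.
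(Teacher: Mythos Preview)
Your proposal is correct and follows the same route as the paper: the theorem is stated there without a separate proof, simply as the consequence of identifying $\M(P)\cong\M(\gpoid{Z})$, invoking the lemma that $\tilde\sigma$ is surjective when $\Sigma$ is simply connected (so $\overline\Psi$ becomes a bijection), and applying Proposition~\ref{prop:Theorem-5-of-MOY}. You have merely made explicit two steps the paper leaves tacit---the identification $\mathrm{Sym}^{ad}(\C P^1)\cong\C P^{ad}$ and the passage from bijection to diffeomorphism---and your caution about the latter is well placed, since the paper does not comment on it.
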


\paragraph{Acknowledgments}
The author would like to thank the Max-Planck-Institut f\"ur Mathematik in Bonn for providing financial support and excellent environment during my stay. He would also like to thank Andreas Ott and Nuno Miguel Rom\~ao for valuable comments and discussions.


\medskip
{\em
\noindent
Max-Planck-Institut f\"ur Mathematik \newline
Vivatsgasse 7 \newline
53111 Bonn \newline
GERMANY

\noindent
E-mail: sakai@blueskyproject.net
}
\end{document}